\patchcmd{\section}{\scshape}{\bfseries}{}{}
\renewcommand{\@secnumfont}{\bfseries}
\newcommand{\RR}{\mathbb{R}}
\providecommand{\abs}[1]{\lvert#1\rvert}
\providecommand{\Abs}[1]{\left|#1\right|}
\providecommand{\norm}[1]{\lVert#1\rVert}
\providecommand{\Norm}[1]{\left\|#1\right\|}
\DeclarePairedDelimiter{\floor}{\lfloor}{\rfloor}
\def\eps{{\varepsilon}}
\def\en{{\omega}}
\def\al{{\alpha}}
\title{Khintchine types of translated coordinate hyperplanes}
\author[F.~A.~Ram{\'i}rez]{Felipe A.~Ram{\'i}rez}
\address{Department of Mathematics, University of York, UK}
\email{felipe.ramirez@york.ac.uk}
\date{}
\begin{document}


\begin{abstract}
  There has been great interest in developing a theory of ``Khintchine
  types'' for manifolds embedded in Euclidean space, and considerable
  progress has been made for \emph{curved} manifolds. We treat the
  case of translates of coordinate hyperplanes, decidedly \emph{flat}
  manifolds. In our main results, we fix the value of one coordinate
  in Euclidean space and describe the set of points in the
  fiber over that fixed coordinate that are rationally approximable at
  a given rate. We identify translated coordinate hyperplanes for
  which there is a dichotomy as in Khintchine's Theorem: the set of
  rationally approximable points is null or full, according to the
  convergence or divergence of the series associated to the desired
  rate of approximation.
\end{abstract}


\maketitle
\thispagestyle{empty}

\setcounter{tocdepth}{1} 
\tableofcontents

\section{Introduction}

\subsection{General setting}
The central object of study in simultaneous metric Diophantine
approximation is the set
\begin{equation*}
  \mathcal{W}_d (\psi) = \left\{\begin{split} &\boldsymbol{x}\in\mathbb{R}^d \textrm{ such that the inequality } \Norm{q\boldsymbol{x} - \boldsymbol{p}}_\infty < \psi(q) \\ &\textrm{holds for infinitely many } (\boldsymbol{p},q)\in \mathbb{Z}^d \times\mathbb{N}\end{split}\right\}
\end{equation*}
of \emph{$\psi$-approximable vectors} in $\mathbb{R}^d$, where $\psi:
\mathbb{N} \to \mathbb{R}^+ \cup \{0\}$ is a given map, which we call
an \emph{approximating function} if it is non-increasing. In words,
$\mathcal{W}_d(\psi)$ is the set of $d$-tuples of real numbers that
can be rationally approximated simultaneously, meaning with common
denominator, at the ``rate'' given by $\psi$, with infinitely many
different denominators.  For $\tau \in\mathbb{R}^+$ we denote
$\mathcal{W}_d(q\mapsto q^{-\tau}) = \mathcal{W}_d(\tau)$. The
supremum over all $\tau \in \mathbb{R}^+\cup\{\infty\}$ such that
$\boldsymbol{x}\in\mathcal{W}_d(\tau)$ is called the \emph{Diophantine
  type of $\boldsymbol{x}$}, and if it is $\infty$, then
$\boldsymbol{x}$ is called \emph{Liouville}. The Liouville numbers
form a set of Hausdorff dimension $0$ in $\mathbb{R}$.

\subsection{Foundational results}
The seminal result of Diophantine approximation, \emph{Dirichlet's
  Theorem} (c.\,1840), guarantees that if $\psi(q) \geq q^{-1/d}$,
then the set $\mathcal{W}_d (\psi)$ is all of $\mathbb{R}^d$. On the
other hand, a standard argument using the Borel--Cantelli Lemma shows
that if $\psi(q) \leq q^{-1/d - \eps}$ for some $\eps>0$, then
$m_d(\mathcal{W}_d(\psi))=0$, where $m_d$ is Lebesgue measure on
$\mathbb{R}^d$. One may guess that the difference lies in the
convergence or divergence of the integral of $\psi^d$. Indeed,
\emph{Khintchine's Theorem} (1926) set the foundation for simultaneous
metric Diophantine approximation by making this dichotomy
precise~\cite{Khintchine}.

\theoremstyle{plain} \newtheorem{theorem}{Theorem}

\theoremstyle{plain} \newtheorem*{kt}{Khintchine's Theorem}
\begin{kt}[1926]
  Let $\psi$ be an approximating function, and $d\in\mathbb{N}$. Then
  \[
  m_d(\mathcal{W}_d (\psi)) = \begin{dcases} \textsc{null} &
    \textrm{if}\quad \sum_{q=1}^\infty \psi(q)^d < \infty \\
    \textsc{full} & \textrm{if}\quad \sum_{q=1}^\infty \psi(q)^d =
    \infty.\end{dcases}
  \]
\end{kt}

\theoremstyle{remark} \newtheorem{remark}[theorem]{Remark}

\theoremstyle{remark} \newtheorem*{remark*}{Remark}

When it comes to the Lebesgue measure of $\mathcal{W}_d(\psi)$,
Khintchine's Theorem tells us the whole story. Of course, there are
other measures, and notions of size, that one may
consider. \emph{Jarn{\'i}k's Theorem} (1931) provides a similar
dichotomy for Hausdorff measures of $\mathcal{W}_d(\psi)$.

Later, Gallagher~\cite{Gallagherkt} extended Khintchine's Theorem in
the following sense.  \theoremstyle{plain}
\newtheorem*{gallagher}{Gallagher's Theorem}
\begin{gallagher}[1965]
  If $d\geq 2$, then Khintchine's Theorem is also true for functions
  $\psi: \mathbb{N}\to\mathbb{R}^+\cup\{0\}$ that are not monotone.
\end{gallagher}

\begin{remark*}
  Gallagher's Theorem is one of the main tools here. We use it in the
  proofs of
  Theorems~\ref{thm:prototype},~\ref{thm:prototypes},~\ref{thm:divergence},
  and~\ref{thm:upper}. (See~\S\ref{sec:onproofs}.)
\end{remark*}

\subsection{Current directions}
One of the major trends is in developing the theory of rational
approximations and ``Khintchine types'' for manifolds embedded in
$\mathbb{R}^d$. A manifold $\mathcal{M}\subset\mathbb{R}^d$ is said to
be of \emph{Khintchine type for divergence} if whenever $\psi$ is an
approximating function such that $\sum_{q\in\mathbb{N}}\psi(q)^d$
diverges, almost every point on $\mathcal{M}$ is
$\psi$-approximable. On the other hand, it is said to be of
\emph{Khintchine type for convergence} if whenever $\psi$ is an
approximating function such that $\sum_{q\in\mathbb{N}}\psi(q)^d$
converges, almost no point on $\mathcal{M}$ is $\psi$-approximable. If
it is both, it is of \emph{Khintchine type}.

Recently, Beresnevich, Dickinson, and Velani have shown that any
non-degenerate (meaning curved enough that no part of it is contained
in any hyperplane) submanifold of $\mathbb{R}^d$ is of Khintchine type
for divergence~\cite{BDVplanarcurves, Bermanifolds}. Vaughan and
Velani showed that non-degenerate planar curves are of Khintchine type
for convergence~\cite{VV06}.

\subsection{Our focus} This article is about the \emph{degenerate}
case. Far from deviating from all hyperplanes, the manifolds we
consider here \emph{are} hyperplanes. Specifically, we investigate
questions related to the following general problem:
\begin{quote}
  \emph{Describe the set of rationally approximable points in the
    fiber over a given fixed coordinate in Euclidean space.}
\end{quote}
For instance, suppose $\psi$ is an approximating function such that
$\sum\psi(q)^d$ diverges, say $\psi(q) = (q\log q)^{-1/d}$. Let
$x\in\mathbb{R}$ be fixed. In this example, Dirichlet's Theorem
guarantees that $x$ is $\psi$-approximable. But our $\psi$ decays
quite slowly, so we may expect that almost every point $(x,x_2, x_3,
\dots, x_n)\in\mathbb{R}^d$ in the fiber over $x$ is also
$\psi$-approximable. Our first result, Theorem~\ref{thm:prototype},
confirms this for $d\geq 3$. On the other hand, if we had chosen an
approximating function such that $\sum\psi(q)^d$ converges, then it
would make sense to seek the opposite statement: almost no points
$(x,x_2, x_3, \dots, x_d)\in\mathbb{R}^d$ are $\psi$-approximable,
with $x\in\mathbb{R}$ fixed. We find in Theorem~\ref{thm:convergence}
that this is sometimes true, sometimes not.

All of our results (presented in~\S\ref{sec:divresults}) are of a
similar flavor. Namely, they are steps toward the more general and
distant goal of bringing the theory of Khintchine types to the setting
of affine subspaces in $\RR^n$. Ultimately, one would like to be able
state a condition on an approximating function $\psi$ that is
equivalent to almost all points on a subspace being
$\psi$-approximable. As it stands, we only manage this for certain
hyperplanes (see Theorem~(b)). The rest of our results are sufficient
conditions for the ``almost all'' or ``almost no'' cases.

\section{Results}\label{sec:divresults}
\subsection{Divergence results for prototypical approximating
  functions}\label{sec:prototypes}

We have a number of results for the divergence situation, which for
illustrative purposes we state in order of increasing generality of
approximating functions. The first holds for the approximating
function $\psi(q)=(q\log q)^{-1/d}$.

\begin{theorem}\label{thm:prototype}
  Let $d\geq 3$ and $\psi(q) = (q\log q)^{-1/d}$. Then
  \[
  m_{d-1}\left(\mathcal{W}_{d}(\psi)\cap
    \left(\{x\}\times\mathbb{R}^{d-1}\right)\right)=\textsc{full}
  \]
  for every $x\in\mathbb{R}$.
\end{theorem}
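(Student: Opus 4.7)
The plan is to reduce to an application of Gallagher's Theorem in dimension $d-1\geq 2$. Fix $x\in\mathbb{R}$ and define
\[
A_x := \bigl\{q\in\mathbb{N} : \exists\, p\in\mathbb{Z},\ \abs{qx-p}<\psi(q)\bigr\},
\]
together with the (generally non-monotone) function $\tilde\psi:\mathbb{N}\to\mathbb{R}^+\cup\{0\}$ given by $\tilde\psi(q)=\psi(q)$ for $q\in A_x$ and $\tilde\psi(q)=0$ otherwise. Unwinding the definitions,
\[
\mathcal{W}_d(\psi)\cap\bigl(\{x\}\times\mathbb{R}^{d-1}\bigr) = \{x\}\times\mathcal{W}_{d-1}(\tilde\psi),
\]
so, since $d-1\geq 2$, Gallagher's Theorem applies to the non-monotone $\tilde\psi$ and reduces the problem to showing
\[
S(x) := \sum_{q\in A_x}\psi(q)^{d-1} = \sum_{q\in A_x}(q\log q)^{-(d-1)/d} = \infty \qquad \text{for every } x\in\mathbb{R}.
\]

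Heuristically, $q$ belongs to $A_x$ with ``density'' $\asymp 2\psi(q)$, so $S(x)$ should be morally comparable to $\sum\psi(q)^d=\sum(q\log q)^{-1}$, which diverges at the critical rate; the exponent $(d-1)/d<1$ provides some slack.

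Making the divergence of $S(x)$ rigorous for \emph{every} $x$, rather than just almost every, is the main obstacle. For rational $x=a/b$, every multiple of $b$ lies in $A_x$, so $S(x)$ contains the divergent tail $\sum_m(bm\log bm)^{-(d-1)/d}$. For irrational $x$, I would use the continued-fraction convergents $p_k/q_k$: the estimate $\abs{q_kx-p_k}<1/q_{k+1}$ forces the multiples $mq_k$ (for $m$ up to a $k$-dependent bound) into $A_x$, which handles $x$ whose partial quotients are sufficiently large (the ``Liouville regime''). In the complementary ``badly approximable regime'' where $q_{k+1}\asymp q_k$, the multiplication trick alone gives only summable contributions, but the uniform discrepancy estimate $D_N(qx)\ll(\log N)/N$ yields $|A_x\cap[2^k,2^{k+1})|\asymp 2^{k(d-1)/d}k^{-1/d}$, so the $k$-th dyadic block contributes $\asymp 1/k$ to $S(x)$. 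An interpolation between these two regimes covers the remaining intermediate Diophantine types.
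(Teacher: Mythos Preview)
Your reduction to Gallagher via the auxiliary function $\tilde\psi$ is exactly the paper's first step, and your rational case is the same computation the paper uses. The gap is in the irrational case: the two extreme regimes you treat are a measure-zero set of base points, and the phrase ``an interpolation between these two regimes covers the remaining intermediate Diophantine types'' is where essentially all of the content of the theorem lives, yet you give no mechanism for it.

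Concretely, your discrepancy bound $D_N(x)\ll(\log N)/N$ holds only for badly approximable $x$; for irrational $x$ with unbounded partial quotients the correct bound is $N D_N\ll\sum_{i\leq n}a_i$ (with $q_{n-1}\leq N<q_n$), which can be far larger than $\log N$ and will swamp the main term $2^{k(d-1)/d}k^{-1/d}$ in your dyadic count. On the Liouville side, the multiples-of-$q_k$ trick you sketch does put many integers into $A_x$, but you have not shown that the resulting contribution to $S(x)$ is bounded below term-by-term, and in any case this says nothing about non-Liouville $x$ of large finite type. A typical number---say one with $a_n$ growing slowly to infinity, Diophantine type $1$, not badly approximable---falls into neither regime, and there is no obvious way to ``interpolate'' a discrepancy argument and a convergent-multiples argument.

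The paper avoids discrepancy entirely. It builds, via the Three Gaps Theorem, an explicit sequence $\{L_n\}$ of orbit lengths with gap ratio bounded by $R$, groups $\mathbb{N}$ into blocks $B_m$ determined by the continued fraction of $x$, and proves (Lemma~\ref{lem:subseries}) that divergence of $\sum_{q\in A(x,R)}\psi(q)^d$ forces divergence of your $S(x)$. For $\psi(q)=(q\log q)^{-1/d}$ and non-Liouville $x$, Lemma~\ref{lem:prototype} then runs a dichotomy: either $A(x,R)$ has positive lower density (and the divergence is immediate), or along a subsequence the blocks are so long that $\int_{\text{block}}(q\log q)^{-1}\,dq\geq\delta>0$, with $\delta$ depending on the Diophantine type of $x$. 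The genuinely Liouville case is dispatched separately by Khintchine's transference principle, which gives that \emph{every} point of $\{x\}\times\mathbb{R}^{d-1}$ is $\psi$-approximable once $x$ has Diophantine type exceeding $d$. Your proposal would need a replacement for this block dichotomy to become a proof.
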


From Theorem~\ref{thm:prototype} we can immediately deduce that the
same statement must hold for $\psi(q) = (q\log\dots\log q)^{-1/d}$,
because this function dominates $(q\log q)^{-1/d}$. Slightly more
challenging are approximating functions of the form
\[
\psi_{s,d}(q) = \left(\frac{1}{q (\log q)(\log\log
    q)\dots(\log\dots\log q)}\right)^{1/d}
\]
where $s\in\mathbb{N}$ is the length of the last string of
logarithms. For these we are able to prove the following.

\begin{theorem}\label{thm:prototypes}
  Let $d\geq 3$ and $s\in\mathbb{N}$. Then
  \[
  m_{d-1}\left(\mathcal{W}_{d}(\psi_{s,d})\cap
    \left(\{x\}\times\mathbb{R}^{d-1}\right)\right)=\textsc{full}
  \]
  for any $x$ whose Diophantine type is greater than $d$, and any $x$
  whose \emph{regular} Diophantine type is greater than $1$.
\end{theorem}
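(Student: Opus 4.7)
My strategy mirrors the (anticipated) proof of Theorem~\ref{thm:prototype}: restrict to the fiber, reduce to a divergence statement for approximation in $\RR^{d-1}$, and apply Gallagher's Theorem. What is new here is that $\psi_{s,d}$ decays strictly faster than $(q\log q)^{-1/d}$, so the argument must exploit the stronger arithmetic hypothesis on $x$ to make up for the additional logarithmic factors.

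\textbf{Reduction.} Write $\psi = \psi_{s,d}$ and set
\[
Q_x = \{q \in \mathbb{N} : \norm{qx} < \psi(q)\}, \qquad \psi_x(q) = \psi(q)\,\mathbf{1}_{Q_x}(q),
\]
where $\norm{\cdot}$ denotes distance to $\mathbb{Z}$. Projection away from the first coordinate identifies $\mathcal{W}_d(\psi) \cap (\{x\}\times\RR^{d-1})$ with $\mathcal{W}_{d-1}(\psi_x)$ modulo a null set: for each approximation in the fiber the first-coordinate condition forces $q \in Q_x$, and any common denominator $q \in Q_x$ is usable for the remaining $d-1$ coordinates. Since $d - 1 \geq 2$, Gallagher's Theorem applies to the (generally non-monotone) $\psi_x$, and its divergence half reduces the theorem to proving
\[
S(x) := \sum_{q \in Q_x} \psi(q)^{d-1} = \infty.
\]

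\textbf{Divergence of $S(x)$.} Each denominator $q^*$ at which $x$ is exceptionally well approximated seeds many elements of $Q_x$: if $\norm{q^* x} < (q^*)^{-\tau}$, then for every positive integer $k$ with $k\norm{q^* x} < \psi(kq^*)$ one has $kq^* \in Q_x$. A direct computation shows that $k$ ranges up to $K(q^*) \asymp (q^*)^{(d\tau - 1)/(d+1)}$ (ignoring log factors), and the estimate $\sum_{k=1}^{K} k^{-(d-1)/d} \sim d\,K^{1/d}$ yields
\[
\sum_{k=1}^{K(q^*)} \psi(kq^*)^{d-1} \gtrsim (q^*)^{(\tau - d)/(d+1)} / (\log q^*)^{O(1)}.
\]
Under the first hypothesis (Diophantine type exceeds $d$), infinitely many $q^*$ satisfy $\norm{q^* x} < (q^*)^{-\tau}$ for a common $\tau > d$; passing to a sparse enough subsequence makes the intervals $[q^*, K(q^*)q^*]$ pairwise disjoint, so the contributions aggregate to $+\infty$. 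Under the second hypothesis, the continued-fraction convergents $p^*_n/q^*_n$ satisfy $q^*_{n+1} \gg (q^*_n)^{1+\delta}$ infinitely often (regular type exceeds $1$), giving $\norm{q^*_n x} < (q^*_n)^{-(1+\delta)}$; the intrinsic sparsity of convergents handles disjointness automatically.

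\textbf{Main obstacle.} The delicate quantitative point is that under the weaker regular-type hypothesis the approximation exponent $1 + \delta$ may be far smaller than $d$, shrinking both $K(q^*_n)$ and the per-convergent contribution. Verifying that $S(x)$ still diverges—especially that the $s$-fold iterated logarithmic penalties in $\psi_{s,d}$ do not overwhelm the gain—requires careful bookkeeping, and may demand using denominators of the form $kq^*_n + \ell q^*_{n-1}$ (intermediate fractions between consecutive convergents) in place of, or alongside, pure multiples of $q^*_n$. I expect this balancing to absorb most of the technical effort.
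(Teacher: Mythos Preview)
Your reduction is correct and matches the paper exactly: one restricts to the fiber, defines $\bar\psi = \psi\cdot\mathbf 1_{Q_x}$, and applies Gallagher in dimension $d-1\geq 2$, so everything reduces to the divergence of $S(x)=\sum_{q\in Q_x}\psi(q)^{d-1}$.

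For the case of Diophantine type $>d$, your multiples-of-$q^*$ argument is sound (your exponent $(\tau-d)/(d+1)$ is correct), so each good $q^*$ contributes a positive power and divergence follows. The paper takes a different and shorter route here: by Khintchine's transference principle, if $x$ has Diophantine type $>d$ then \emph{every} point of $\{x\}\times\RR^{d-1}$ has Diophantine type $>1/d$, hence is $\psi_{s,d}$-approximable outright. Your approach gives ``almost every'' rather than ``every'', but it is self-contained.

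For regular Diophantine type $>1$, your proposal has a genuine gap. As your own computation shows, with $\tau=1+\delta$ and $d\geq 3$ the per-convergent block contributes only $(q^*)^{(1+\delta-d)/(d+1)}$, a \emph{negative} power, so summing over convergents cannot diverge; you flag this as the ``main obstacle'' but do not close it. Your suggested fix via intermediate fractions $kq_n^*+\ell q_{n-1}^*$ is pointing in the right direction, but to make it work you would in effect be reinventing the Three Gaps counting that the paper carries out systematically, and more importantly you never invoke the one feature that distinguishes \emph{regular} type from ordinary type: the positive lower asymptotic density of the good indices. The paper's argument uses exactly this. Via the bounded-gap-ratio sequence $\{L_n\}$ and the block decomposition $[\Sigma_{\alpha_\ell},\Sigma_{\omega_\ell}]$, it shows (Lemma~\ref{lem:prototypes}) that
\[
\int_{\Sigma_{\alpha_\ell}}^{\Sigma_{\omega_\ell}}\psi_{s,d}(t)^d\,dt \;\gg\; \psi_{s-2,d}(\ell)^d \;=\; \frac{1}{\ell\,\log\ell\cdots\log^{s-2}\ell}
\]
on a positive-density sequence of $\ell$'s; since $\sum 1/(\ell\log\ell\cdots\log^{s-2}\ell)$ diverges along any positive-density subsequence, Lemma~\ref{lem:subseries} then gives $S(x)=\infty$. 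The ``drop by two logarithms'' is what absorbs the extra iterated-log penalties you were worried about, and the positive density is what guarantees the reduced series still diverges. Without bringing in the density hypothesis your argument cannot succeed, because an irrational with $q_{n+1}\asymp q_n^{1+\delta}$ only along a very sparse subsequence (ordinary type $>1$ but regular type $=1$) need not satisfy the conclusion by this route.
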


The regular Diophantine type of $x\in\mathbb{R}$ is the supremal
$\tilde\sigma\in[1,\infty)$ such that rational approximations
$\Abs{x-p/q}<q^{-(1+\tilde\sigma)}$ appear with positive lower
asymptotic density in the sequence $\{q_n\}_{n\geq 0}$ of continuants
of $x$. In simpler words, the regular Diophantine type of a number is
the maximal rate at which it can be rationally approximated, not just
infinitely often, but also with some frequency.

\begin{remark*}[On Khintchine's transference principle]
  We will present a proof of Theorem~\ref{thm:prototype} that holds
  for all non-Liouville $x$, and a proof of
  Theorem~\ref{thm:prototypes} that holds for non-Liouville $x$ with
  regular Diophantine type greater than $1$. The remaining cases are
  covered by Khintchine's transference principle, which implies that
  if $x\in\RR$ has Diophantine type greater than $d$, then
  \emph{every} point on $\{x\}\times\RR^{d-1}$ has Diophantine type
  greater than $1/d$. In particular, if $\psi$ is an approximating
  function that eventually dominates $q^{-(1+\eps)/d}$ for every
  $\eps>0$, then every point on $\{x\}\times\RR^{d-1}$ is
  $\psi$-approximable.
\end{remark*}

\begin{remark*}
  Theorem~\ref{thm:prototypes} is actually a corollary of a more
  general theorem (Theorem~\ref{thm:prototypesgen}) that holds for
  more fibers, but has a more technical statement. Both theorems are
  still true for uncountably many numbers \emph{not} satisfying their
  assumptions, including uncountably many numbers of \emph{any}
  Diophantine type and regular Diophantine type $1$, and \emph{every}
  number of Diophantine type at most the golden ratio regardless of
  regular Diophantine type. Such fibers are accounted for in
  Theorem~\ref{thm:divergence} below.
\end{remark*}

\subsection{Divergence result for approximating functions satisfying
  divergence condition}
In the next theorem we name fibers on which the desired ``almost
everywhere'' assertion can be made, provided only that the
approximating function $\psi$ is such that $\sum\psi(q)^d$
diverges. Among these are fibers over base points of Diophantine type
less than the golden ratio, or with an additional restriction, two,
and an uncountable set of fibers over base points of any given
Diophantine type.

\begin{theorem}\label{thm:divergence}
  Let $d\geq 3$. If $\psi$ is an approximating function such that the
  sum $\sum_{q\in\mathbb{N}} \psi(q)^d$ diverges, then
  \[
  m_{d-1}\left(\mathcal{W}_{d}(\psi)\cap
    \left(\{x\}\times\mathbb{R}^{d-1}\right)\right)=\textsc{full}
  \]
  for:
  \begin{itemize}
  \item[\rm\textbf{(a)}] Any $x \in\mathbb{Q}$ (even if $d=2$).
  \item[\rm\textbf{(b)}] Any $x\in\mathbb{R}\backslash\mathbb{Q}$ with
    the positive density property (see Definition~\ref{def:pdp}),
    including but not restricted to:
    \begin{itemize}
    \item Any $x\notin\mathcal W_1(\varphi)$ where
      $\varphi=\frac{1+\sqrt{5}}{2}$.
    \item Any $x\notin\mathcal W_1(2)$ for which there exists $R\geq
      1$ such that eventually whenever a partial quotient of $x$
      exceeds $R$, its continuants at least double before the next
      partial quotient exceeding $R$.
    \end{itemize}
  \item[\rm\textbf{(c)}] Uncountably many numbers of any Diophantine
    type.
  \end{itemize}
\end{theorem}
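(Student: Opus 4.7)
The plan is to reduce each part to a one-variable divergence statement along a restricted set of denominators and then apply Gallagher's Theorem in the remaining $d-1$ coordinates. Fix $x$ and set $\mathcal{D}_x := \{q\in\mathbb{N} : \Norm{qx}<\psi(q)\}$, where $\Norm{\cdot}$ denotes distance to the nearest integer. A point $(x,y_2,\ldots,y_d)$ lies in $\mathcal{W}_d(\psi)$ precisely when $(y_2,\ldots,y_d)$ is $\tilde\psi$-approximable in dimension $d-1$ for the (generally non-monotone) approximating function $\tilde\psi(q)=\psi(q)\mathbf{1}_{\mathcal{D}_x}(q)$. Since $d-1\geq 2$, Gallagher's Theorem yields a full-measure slice provided
\[
\sum_{q\in\mathcal{D}_x}\psi(q)^{d-1}=\infty,
\]
and this reduction is my uniform first step for all three parts (with one-dimensional Khintchine replacing Gallagher in the exceptional $d=2$ case of (a)).

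For part (a) with $x=a/b$, the set $\mathcal{D}_x$ already contains $b\mathbb{N}$. Monotonicity of $\psi$ gives $\sum_q\psi(q)^d\leq b\sum_k\psi(bk)^d$, so $\sum_k\psi(bk)^d=\infty$, and then $\psi(bk)^{d-1}\geq\psi(1)^{-1}\psi(bk)^d$ forces the required divergence; the $d=2$ case drops out of classical Khintchine applied to the single remaining coordinate along denominators in $b\mathbb{N}$. For part (b), I would unpack Definition~\ref{def:pdp} as supplying a subset $\mathcal{D}'_x\subseteq\mathcal{D}_x$ built from continuants $q_n$ of $x$ and their integer multiples (exploiting $\Norm{q_nx}\leq 1/q_{n+1}$) with the property that $\sum_{q\in\mathcal{D}'_x}\psi(q)^{d-1}$ diverges whenever $\sum_q\psi(q)^d$ does. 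A block decomposition $\mathbb{N}=\bigsqcup_n[q_n,q_{n+1})$ provides the heuristic: monotonicity bounds each block's contribution to $\sum\psi^d$ by $\psi(q_n)^d(q_{n+1}-q_n)$, and the positive-density hypothesis ensures that enough surviving blocks supply sufficiently many denominators in $\mathcal{D}_x$ to absorb this mass at the lower power $d-1$. The two bulleted sub-cases are where this bookkeeping is transparent: if $x\notin\mathcal{W}_1(\varphi)$ then eventually $q_{n+1}\leq q_n^{\varphi}$, so $q_n\in\mathcal{D}_x$ whenever $\psi(q_n)>q_n^{-\varphi}$, which must happen infinitely often since otherwise $\sum\psi(q)^d$ would be dominated by a convergent $p$-series (using $\varphi d\geq 3>1$); and the doubling condition similarly produces geometric chains $q_n,2q_n,4q_n,\ldots$ inside $\mathcal{D}_x$ within each block containing a partial quotient exceeding $R$.

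For part (c), I would construct $x$ directly via its continued fraction $[a_0;a_1,a_2,\ldots]$, alternating long blocks of bounded partial quotients (to secure the positive density property) with occasional sporadic large partial quotients calibrated to realize any prescribed Diophantine type; the combinatorial freedom in the bounded blocks at each stage yields $2^{\aleph_0}$ admissible branches and hence uncountably many such $x$ of that type. The hardest step throughout is part (b): verifying that the positive density property really does rescue the divergence of $\sum\psi(q)^d$ for an arbitrary monotone $\psi$, since this mass can in principle concentrate at scales where $\mathcal{D}_x$ is sparse. Pinning down the transfer from a dimension-$d$ sum to a dimension-$(d-1)$ sum over $\mathcal{D}_x$ in a way that tolerates an unrestricted $\psi$, and checking that the two bulleted sufficient conditions truly imply the abstract positive density hypothesis, is the technical heart of the proof.
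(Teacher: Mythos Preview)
Your reduction to Gallagher via the truncated function $\tilde\psi$ and your treatment of part~(a) match the paper's exactly, and your construction sketch for part~(c) is close in spirit to the paper's Lemma~\ref{lem:construction}. The genuine gap is in part~(b).

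You have misread Definition~\ref{def:pdp}. The positive density property is \emph{not} a statement about continuants $q_n$ and their multiples; it is a statement about the sequence $\{L_n^R\}$ of orbit lengths with bounded gap ratio coming from the Three Gaps Theorem (\S\ref{sec:threegaps}, \S\ref{sec:brp}). The paper's engine is Lemma~\ref{lem:schmidt}, which uses the gap-ratio bound to show that the interval $(\Sigma_n,\Sigma_{n+1}]$ contains $\gg L_{n+1}\psi(\Sigma_{n+1})$ elements of $\mathcal{Q}(x,\psi)$; this is what converts divergence of the subseries $\sum_{q\in A(x,R)}\psi(q)^d$ (over a set of positive lower density, by Lemma~\ref{lem:lad}) into divergence of $\sum_{q\in\mathcal{Q}(x,\psi)}\psi(q)^{d-1}$ (Lemma~\ref{lem:subseries}). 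Your proposed block decomposition $\mathbb{N}=\bigsqcup_n[q_n,q_{n+1})$ and the idea of harvesting denominators only from the continuants themselves cannot produce this transfer: the continuants grow at least geometrically, so even if every $q_n$ lay in $\mathcal{D}_x$, the sum $\sum_n\psi(q_n)^{d-1}$ would typically converge (take $\psi(q)=(q\log q)^{-1/d}$). You need a count of \emph{all} $q$ in a window with $\|qx\|<\psi(q)$, and that is precisely what the Three Gaps machinery supplies. Your bulleted arguments for $x\notin\mathcal{W}_1(\varphi)$ and the doubling condition also do not do what you claim: they are in the paper only as sufficient conditions for the positive density property (Proposition~\ref{prop:pdp}), established by controlling the block sums $\Sigma_{B_m}$ via Lemma~\ref{lem:BmRsum}, not by placing individual continuants into $\mathcal{D}_x$.
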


\begin{remark*}
  The sub-points in part~(b) come from Proposition~\ref{prop:pdp}.
\end{remark*}

One may ask whether Theorem~\ref{thm:divergence} holds for
non-monotonic functions. A simple observation shows that it cannot:
after fixing $x\in\mathbb{R}\backslash\mathbb{Q}$, consider the
function $\psi(q) = \norm{qx}$, where $\norm{\cdot}$ denotes distance
to the nearest integer. Then $\sum\psi(q)^d$ diverges, yet we can
never have $\norm{qx}<\psi(q)$, so the entire fiber over $x$ is
missing from $\mathcal{W}_d(\psi)$.

\subsection{Divergence result for approximating functions all of whose convergent subseries have zero density}
As to the question of whether the result of
Theorem~\ref{thm:divergence} holds for fibers other than those fitting
into parts (a), (b), or (c), we have the following theorem, which
gives a sufficient condition on the approximating function $\psi$ for
the result to hold on \emph{all} fibers. Recall that the density
$\operatorname{d}(A)$ of a set $A\subseteq\mathbb{N}$ is given by the
limit
\[
\operatorname{d}(A) = \lim_{N\to\infty}\frac{\Abs{A\cap[1, N]}}{N}
\]
when it exists.

\begin{theorem}\label{thm:upper}
  Let $d\geq 3$. If $\psi$ is an approximating function such that
  every convergent subseries $\sum_{q\in A} \psi(q)^d$ has asymptotic
  density $\operatorname{d}(A)=0$, then
  \[
  m_{d-1}\left(\mathcal{W}_{d}(\psi)\cap
    \left(\{x\}\times\mathbb{R}^{d-1}\right)\right)=\textsc{full}
  \]
  for all $x\in\mathbb{R}$.
\end{theorem}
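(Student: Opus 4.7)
The plan is to apply Gallagher's Theorem in dimension $d-1$ to recast the fiber claim as the divergence of an $x$-dependent series, and then verify this divergence for every $x$ by analysing the contribution of each continued-fraction level of $x$.

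Write $A_\psi(x) := \{q\in\mathbb N : \|qx\|<\psi(q)\}$ (with $\|\cdot\|$ the distance to the nearest integer) and $\tilde\psi_x(q) := \psi(q)\,\mathbf 1_{A_\psi(x)}(q)$. The restriction $\|qx\|<\psi(q)$ limits the usable denominators to $A_\psi(x)$, so $(x_2,\dots,x_d)\in\mathcal W_d(\psi)\cap(\{x\}\times\mathbb R^{d-1})$ if and only if $(x_2,\dots,x_d)\in\mathcal W_{d-1}(\tilde\psi_x)$. Since $\tilde\psi_x$ need not be monotone but $d-1\geq 2$, Gallagher's Theorem reduces the problem to showing
\[
\sum_{q\in A_\psi(x)}\psi(q)^{d-1} = \infty
\]
for every $x\in\mathbb R$. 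A short separate argument shows that the hypothesis forces $\psi(q)\gtrsim q^{-1/d}$ eventually: otherwise one can choose scales $N_k$ with $N_k\psi(N_k)^d\to 0$ fast and form $A=\bigcup_k[N_k,2N_k]$, which has positive upper density but convergent sum, contradicting the hypothesis.

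For $x = p/Q\in\mathbb Q$, the set $Q\mathbb N\subseteq A_\psi(x)$ has density $1/Q$, so by hypothesis $\sum_{Q\mathbb N}\psi^d=\infty$; since $\psi\leq 1$ eventually, this upgrades to divergence at exponent $d-1$. For $x$ irrational, let $(q_m)$ be the denominators of the continued fraction convergents of $x$ and call $[q_m,q_{m+1})$ the \emph{$m$-th level}. Partition each level into dyadic blocks $I_n = [2^n,2^{n+1})$ and classify $n$ as \emph{good} if Koksma's inequality, fed by the continued fraction of $x$, gives discrepancy for $\{qx\bmod 1\}_{q\leq 2^n}$ small relative to $\psi(2^{n+1})$, and \emph{bad} otherwise. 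On good blocks, equidistribution gives $|A_\psi(x)\cap I_n|\gtrsim 2^n\psi(2^{n+1})\gtrsim 2^{n(d-1)/d}$, hence a per-block contribution $\gtrsim 1$. On bad blocks, which correspond to a large partial quotient $a_{m+1}$, one uses instead the multiples $kq_m$ with $k\in[2^n/q_m,2^{n+1}/q_m)$; all such multiples lie in $A_\psi(x)$ provided $k\leq K_m := c\,q_{m+1}^{d/(d+1)}q_m^{-1/(d+1)}$, the threshold coming from $\psi\gtrsim q^{-1/d}$. Summing their $\psi^{d-1}$-contributions over the bad portion of level $m$ gives $\sim a_{m+1}^{1/(d-1)}/q_m$. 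A brief case analysis shows that in the Liouville regime $a_{m+1}\geq q_m^{d-1}$ this bad contribution alone is $\gtrsim 1$, while otherwise the good blocks within the same level supply at least a uniform unit. Thus each of the infinitely many levels contributes $\gtrsim 1$, giving $\sum_{q\in A_\psi(x)}\psi(q)^{d-1}=\infty$ and completing the proof via Gallagher.

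\textbf{Main obstacle.} The technical work lies in the discrepancy/multiples bookkeeping: at every continued-fraction level at least one of the two mechanisms (equidistribution or Liouville multiples) must supply the per-level unit contribution. This requires carefully aligning the threshold $K_m$ with the lower bound on $\psi$ provided by the hypothesis and verifying that no "mildly bad" level, whose $a_{m+1}$ is just below $q_m^{d-1}$, escapes with too small a contribution. It is precisely this tuning that necessitates the full hypothesis on convergent subseries rather than the weaker $\sum\psi^d=\infty$ of Theorem~\ref{thm:divergence}.
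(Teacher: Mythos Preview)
Your reduction to Gallagher via the truncated function $\tilde\psi_x$ is exactly the paper's strategy (\S\ref{sec:onproofs}), and your observation that the hypothesis forces $\psi(q)\gtrsim q^{-1/d}$ is correct and pleasant (your block construction $A=\bigcup_k[N_k,2N_k]$ does the job). However, the paper neither extracts nor needs this lower bound: its proof is a two-line application of prepared machinery. By Lemma~\ref{lem:uad} the set $A(x,R)$ built from the Three Gaps bounded-ratio sequence always has positive \emph{upper} density; the hypothesis (read contrapositively: any $A$ whose density fails to exist or is positive gives a divergent subseries) then forces $\sum_{q\in A(x,R)}\psi(q)^d=\infty$, and Lemma~\ref{lem:subseries} converts this into the divergence of $\sum_{q\in\mathcal Q(x,\psi)}\psi(q)^{d-1}$. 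Gallagher finishes. No discrepancy estimates, no level-by-level dichotomy.

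Your irrational-case argument, by contrast, has a real gap. The ``good/bad'' split via Koksma is never made precise, and the crucial assertion --- that when $a_{m+1}<q_m^{d-1}$ the good dyadic blocks inside level $m$ contribute a uniform unit --- is simply stated, not proved. Koksma-type discrepancy bounds at a point $N\in[q_m,q_{m+1})$ depend on \emph{all} earlier partial quotients, not just $a_{m+1}$, so ``$a_{m+1}$ small'' does not by itself guarantee a good block in that level; moreover a level can be very short relative to $q_m$ (take $a_{m+1}=1$ with $a_m$ huge), and then it may contain no full dyadic block at all. Your formula $a_{m+1}^{1/(d-1)}/q_m$ for the bad contribution is also off (the correct exponent on $a_{m+1}$ is $1/d$ or $1/(d+1)$ depending on whether $K_m$ or $a_{m+1}$ is the binding constraint), though the threshold $a_{m+1}\asymp q_m^{d-1}$ happens to survive.

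If you want to salvage a direct argument from $\psi\gtrsim q^{-1/d}$, drop Koksma entirely and use the fact (the remark after Lemma~\ref{lem:brsequence}) that the gap ratio of $\{qx\}_{q=1}^{q_m}$ is at most $2$. Then for any $q_0$ the translated orbit $\{qx\}_{q=q_0+1}^{q_0+q_m}$ has gaps $\asymp 1/q_m$, so the interval $(q_{2m},2q_{2m}]$ (these are disjoint since $q_{2m+2}\geq 2q_{2m}$) contains $\gtrsim q_{2m}\psi(2q_{2m})\gtrsim q_{2m}^{1-1/d}$ elements of $A_\psi(x)$, each contributing $\psi^{d-1}\gtrsim q_{2m}^{-(d-1)/d}$, for a per-block total $\gtrsim 1$. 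That is essentially the paper's Lemma~\ref{lem:schmidt} specialised to $L_n=q_n$, and it replaces your entire good/bad discussion.
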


For example, the approximating function $\psi(q) = cq^{-1/d}$, where
$c>0$, satisfies the requirement that all convergent subseries of
$\sum \psi(q)^d$ have asymptotic density $0$. Therefore, almost every
point on every $d-1$ dimensional fiber of $\mathbb{R}^d$ is
$\psi$-approximable. Of course, in the case $c=1$ we already knew this
(and \emph{more}) from Dirichlet's Theorem. But when we allow any
$c\in(0,1)$, Theorem~\ref{thm:upper} reflects the fact that
\emph{badly approximable vectors}---vectors
$\boldsymbol{x}\in\mathbb{R}^d$ for which there exists
$c:=c(\boldsymbol{x})>0$ such that $\norm{q\boldsymbol{x} -
  \boldsymbol{p}}_\infty \geq c q^{-1/d}$ for all
$(\boldsymbol{p},q)\in\mathbb{Z}^{d}\times\mathbb{N}$---do not
overpopulate any hyperplanes.

\theoremstyle{plain} \newtheorem{corollary}[theorem]{Corollary}

\subsection{Convergence result}
The next result deals with the convergence situation. Given an
approximating function such that $\sum_{q\in\mathbb{N}}\psi(q)^d$
converges, we would like to assert that \emph{almost no} points on the
fiber $\{x\}\times\mathbb{R}^{d-1}$ are $\psi$-approximable. Again, we
are able to make the desired statement for certain fibers, but not for
others, depending on the Diophantine type of the base-point.

\begin{theorem}\label{thm:convergence}
  Let $d\geq 2$. If $\psi$ is an approximating function such that the
  sum $\sum_{q\in\mathbb{N}} \psi(q)^d$ converges, then
  \[
  m_{d-1}\left(\mathcal{W}_{d}(\psi)\cap
    \left(\{x\}\times\mathbb{R}^{d-1}\right)\right)=\textsc{null}
  \]
  for:
  \begin{itemize}
  \item[\rm\textbf{(a)}] $\begin{dcases} \textrm{No $x \in\mathbb{Q}$}
      &\textrm{if $\sum_{q\in\mathbb{N}}\psi(q)^{d-1}$ diverges.} \\
      \textrm{Every $x\in\mathbb{R}$} &\textrm{if it
        converges.}\end{dcases}$
  \item[\rm\textbf{(b)}] Any $x\in\mathbb{R}\backslash\mathbb{Q}$ with
    the bounded ratio property (see Definition~\ref{def:brp}),
    including but not restricted to:
    \begin{itemize}
    \item Any $x$ of Diophantine type less than
      $\varphi=\frac{1+\sqrt{5}}{2}$.
    \item Any $x$ of Diophantine type less than $2$ for which there
      exists $R\geq 1$ such that eventually whenever a partial
      quotient of $x$ exceeds $R$, its continuants at least double
      before the next partial quotient exceeding $R$.
    \end{itemize}
  \end{itemize}
\end{theorem}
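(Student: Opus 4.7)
My plan for both parts is a convergent Borel--Cantelli argument on the $(d-1)$-dimensional fiber over $x$. Writing
\[
Q(x) := \{q \in \mathbb{N} : \|qx\| < \psi(q)\},
\]
the fiber $\mathcal{W}_d(\psi)\cap(\{x\}\times\mathbb{R}^{d-1})$ sits inside $\limsup_{q \in Q(x)} A_q$, where each $A_q \subseteq \mathbb{R}^{d-1}$ has $(d-1)$-dimensional Lebesgue measure at most $(2\psi(q))^{d-1}$. So it suffices to show $\sum_{q \in Q(x)} \psi(q)^{d-1} < \infty$.

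For part (a), I take $x = p_0/q_0 \in \mathbb{Q}$: then $Q(x)$ coincides off a finite set with the arithmetic progression $q_0\mathbb{N}$, since $\|qx\| = 0$ when $q_0 \mid q$ and $\|qx\| \geq 1/q_0$ otherwise. Monotonicity of $\psi$ gives $\sum_k \psi(kq_0)^{d-1} \asymp q_0^{-1}\sum_q \psi(q)^{d-1}$. When $\sum_q \psi(q)^{d-1}$ converges, the null conclusion in fact holds for \emph{every} $x \in \mathbb{R}$, because the Borel--Cantelli bound $|A_q| \leq (2\psi(q))^{d-1}$ is $x$-independent. When $\sum_q \psi(q)^{d-1}$ diverges, I would apply Khintchine's Theorem to $(x_2,\dots,x_d) \in \mathbb{R}^{d-1}$ with the approximating function $k \mapsto \psi(q_0 k)$ to show the fiber over any rational $x$ is \emph{full}, so no rational $x$ satisfies the null conclusion.

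For part (b), I fix an irrational $x$ with BRP and denote its continuants by $\{q_n\}$. I decompose $Q(x)$ using the intervals $[q_n, q_{n+1})$. Classical best-approximation gives $\|qx\| \geq \|q_n x\| \geq 1/(q_{n+1}+q_n)$ on such an interval, so only indices $n$ with $\psi(q_n) > \|q_n x\|$ contribute to $Q(x)$. For those $n$, the three-distance theorem applied to the orbit $\{qx\}_{q < q_{n+1}}$ yields at most $O(\psi(q_n)/\|q_n x\| + 1)$ values of $q < q_{n+1}$ with $\|qx\| < \psi(q_n)$; since $\psi(q) \leq \psi(q_n)$ on the interval, this produces
\[
\sum_{q \in Q(x) \cap [q_n,q_{n+1})} \psi(q)^{d-1} \;\lesssim\; \frac{\psi(q_n)^d}{\|q_n x\|} + \psi(q_n)^{d-1}.
\]
The BRP should deliver both $\|q_n x\| \gtrsim 1/q_n$ and $q_n - q_{n-1} \gtrsim q_n$. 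The first estimate turns the main term into a multiple of $q_n \psi(q_n)^d$; the second, together with monotonicity of $\psi$, yields $q_n \psi(q_n)^d \lesssim \sum_{q=q_{n-1}}^{q_n-1} \psi(q)^d$, so summing in $n$ telescopes to the finite series $\sum_q \psi(q)^d$. The residual $\sum_n \psi(q_n)^{d-1}$ converges because $\psi(q_n) = o(q_n^{-1/d})$ (a consequence of monotonicity and $\sum \psi(q)^d < \infty$) while $q_n$ grows at least like $\varphi^n$.

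The chief technical obstacle is making these BRP consequences precise---namely, that BRP gives both $\|q_n x\| \asymp 1/q_n$ and $q_n - q_{n-1} \asymp q_n$---and that the two explicit sub-bullets (Diophantine type less than $\varphi$, or type less than $2$ with the continuant-doubling condition) actually imply BRP. I would expect this to be the content of a companion proposition analogous to Proposition~\ref{prop:pdp}, proven by a direct analysis of the continued fraction expansion.
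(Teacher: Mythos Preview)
Your treatment of part~(a) is correct and matches the paper's argument. The reduction to Borel--Cantelli via the sum $\sum_{q\in Q(x)}\psi(q)^{d-1}$ is exactly what the paper does, and your observation that $Q(x)$ agrees with $q_0\mathbb{N}$ up to a finite set, together with the application of Khintchine's Theorem on the fiber for the \textsc{full} case, is fine.

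Your approach to part~(b), however, contains a genuine gap. The two consequences you expect from BRP, namely $\|q_n x\|\gtrsim 1/q_n$ and $q_n - q_{n-1}\gtrsim q_n$, are each equivalent to $x$ being \emph{badly approximable} (bounded partial quotients). Since $\|q_n x\|\asymp 1/q_{n+1}$, the first says $q_{n+1}\ll q_n$; the second fails whenever a large partial quotient is followed by $a_n=1$. But BRP is strictly weaker than badly approximable: any $x$ of Diophantine type strictly between $1$ and $\varphi$ has BRP yet has unbounded partial quotients. For such $x$ your main term is $q_{n+1}\psi(q_n)^d$, not $q_n\psi(q_n)^d$, and the factor $q_{n+1}/q_n$ is unbounded, so your telescoping bound breaks down.

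The paper's argument avoids this by \emph{not} decomposing along the continuant intervals $[q_n,q_{n+1})$. Instead it builds, for a chosen $R\ge 1$, an auxiliary sequence $\{L_n\}$ of lengths for which the orbit $\{qx\}_{q=1}^{L_n}$ has gap ratio bounded by $2+R$ (via the Three Gaps Theorem), sets $\Sigma_n=L_1+\dots+L_n$, and partitions $\mathbb{N}$ into the intervals $[\Sigma_{n-1},\Sigma_n)$. On each such interval the equidistribution estimate gives $|Q(x)\cap[\Sigma_{n-1},\Sigma_n)|\ll L_n\psi(\Sigma_{n-1})$, and summing yields $\sum_n L_n\psi(\Sigma_{n-1})^d$ plus a correction term $\sum_m (B_{m+1}-B_m)/\Sigma_{\omega_m}$ coming from the ``jumps'' where consecutive $L_n$'s are not consecutive integers. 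The bounded ratio property is \emph{defined} to be the finiteness of this correction sum; it is not a statement about individual continuants. The sub-bullets (type $<\varphi$, or type $<2$ with continuant doubling) are then verified to imply BRP by a separate continued-fraction computation (Proposition~\ref{prop:brp}).
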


\begin{remark*}
  The subpoints in (b) are Proposition~\ref{prop:brp}. In part~(a), in
  the case that $\sum_{q\in\mathbb{N}}\psi(q)^{d-1}$ diverges, we get
  $\textsc{full}$ instead of $\textsc{null}$.
\end{remark*}

We were unaware during submission of this manuscript that
Theorem~\ref{thm:convergence}(b) actually follows
from~\cite[Theorem~1.6]{Ghosh} in the work of A.~Ghosh.\footnote{We
  thank the reviewer for bringing this paper to our attention.} He
describes ``dual'' approximability properties of points on hyperplanes
when the approximating function gives a convergent series. After
applying Khintchine's transference principle, one finds that Ghosh's
result implies in particular that coordinate hyperplanes in $\RR^d$,
translated perpendicularly by a distance of Diophantine type $< d$,
are of Khintchine type for convergence.

His methods come from dynamics on homogeneous spaces. Specifically,
the approximability properties of a point in $\RR^d$ are related to
the behavior of an associated flow orbit in the space of unimodular
lattices in $\RR^{d+1}$. Whether the orbit diverges into the cusp, and
at what rate, determines the Diophantine type of the point in $\RR^d$
(see~\cite{KM98}). Ghosh's work comes from a growing family of results
exploiting the connections between homogeneous dynamics and
Diophantine approximation, and its most immediate ancestor is a
paper~\cite{Kle03} of Kleinbock on extremality of affine subspaces of
$\RR^d$, relevant in~\S\ref{sec:ext}.

Our arguments for proving Theorem~\ref{thm:convergence} are very
elementary by comparison.

\subsection{A repackaging in terms of Khintchine types} 
We can state Theorems~\ref{thm:divergence} and~\ref{thm:convergence}
more succinctly by using the terminology of Khintchine types.

In the following statements, ``perpendicular translate of coordinate
hyperplane'' means a coordinate hyperplane that has been translated by
a vector perpendicular to it.

\theoremstyle{plain} \newtheorem*{theorema*}{Theorem (a)}
\begin{theorema*}
  Perpendicular translates of coordinate hyperplanes in $\mathbb{R}^d$
  (where $d\geq 2$) by rational numbers are of Khintchine type for
  divergence, but not for convergence.
\end{theorema*}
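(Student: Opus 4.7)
The plan is to reduce both assertions to the theorems already proved. First, a perpendicular translate of a coordinate hyperplane by a rational vector is a set of the form $\{\boldsymbol{x}\in\mathbb{R}^d : x_i = c\}$ for some $i\in\{1,\dots,d\}$ and some $c\in\mathbb{Q}$. Since $\mathcal{W}_d(\psi)$ and $(d-1)$-dimensional Lebesgue measure are both invariant under permutations of coordinates, it suffices to handle the case $i=1$, namely fibers of the form $\{c\}\times\mathbb{R}^{d-1}$ with $c\in\mathbb{Q}$.

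For the divergence half, suppose $\psi$ is an approximating function with $\sum_{q\in\mathbb{N}} \psi(q)^d = \infty$. Theorem~\ref{thm:divergence}(a), which applies to every rational $x$ and every $d\geq 2$, immediately yields
\[
  m_{d-1}\bigl(\mathcal{W}_d(\psi) \cap (\{c\}\times\mathbb{R}^{d-1})\bigr) = \textsc{full},
\]
which is precisely the defining property of Khintchine type for divergence.

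For the failure-of-convergence half, I need to exhibit a single approximating function $\psi$ with $\sum_q \psi(q)^d < \infty$ for which the fiber is not null. Take $\psi(q) = q^{-1/d}(\log q)^{-a}$ with $a > 1/d$, extended by constants at small $q$ so as to be non-increasing on all of $\mathbb{N}$. Then $\sum_q \psi(q)^d = \sum_q q^{-1}(\log q)^{-ad}$ converges because $ad > 1$, while $\sum_q \psi(q)^{d-1} = \sum_q q^{-(d-1)/d}(\log q)^{-a(d-1)}$ diverges because the exponent on $q$ is strictly less than $1$ and a logarithmic factor cannot overcome that polynomial gap. By Theorem~\ref{thm:convergence}(a), for any $c\in\mathbb{Q}$ this forces $m_{d-1}\bigl(\mathcal{W}_d(\psi)\cap(\{c\}\times\mathbb{R}^{d-1})\bigr) = \textsc{full}$ rather than $\textsc{null}$, so the fiber is certainly not null, contradicting the Khintchine-type-for-convergence condition.

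The crux --- and the reason the dichotomy fails on the convergence side --- is that when the base point is rational, the fiber inherits a $(d-1)$-dimensional approximation structure governed by the threshold $\sum\psi(q)^{d-1}$ rather than $\sum\psi(q)^d$. This mismatch is already encoded in Theorem~\ref{thm:convergence}(a); once that theorem is in hand, the failure of Khintchine type for convergence reduces to the one-line gap-function construction above, and all of the real work sits in the proofs of Theorems~\ref{thm:divergence} and~\ref{thm:convergence} themselves.
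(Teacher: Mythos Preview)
Your proof is correct and follows essentially the same approach as the paper: Theorem~(a) is explicitly described there as a repackaging of Theorems~\ref{thm:divergence}(a) and~\ref{thm:convergence}(a), so no separate proof is given. Your only addition is the explicit witness $\psi(q)=q^{-1/d}(\log q)^{-a}$ with $a>1/d$ to demonstrate that approximating functions with $\sum\psi(q)^d<\infty$ but $\sum\psi(q)^{d-1}=\infty$ actually exist, which the paper leaves implicit.
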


\theoremstyle{plain} \newtheorem*{theoremb*}{Theorem (b)}
\begin{theoremb*}
  Perpendicular translates of coordinate hyperplanes in $\mathbb{R}^d$
  (where $d\geq 3$) by numbers with the bounded ratio property are of
  Khintchine type. In fact they are of Khintchine type for convergence
  even when $d=2$.
\end{theoremb*}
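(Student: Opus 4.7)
The plan is to derive Theorem~(b) as a direct repackaging of Theorems~\ref{thm:divergence}(b) and~\ref{thm:convergence}(b) into the Khintchine-type vocabulary. The two halves split cleanly, so I would organize the proof around them.

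For the convergence half (which is claimed for all $d \geq 2$), I would simply invoke Theorem~\ref{thm:convergence}(b). That theorem asserts that whenever $x\in\mathbb{R}\setminus\mathbb{Q}$ has the bounded ratio property and $\psi$ is an approximating function with $\sum\psi(q)^d$ convergent, the set $\mathcal{W}_d(\psi)\cap(\{x\}\times\mathbb{R}^{d-1})$ is $m_{d-1}$-null. This is precisely the definition of the fiber being of Khintchine type for convergence, so no further work is needed in this direction.

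For the divergence half, restricted to $d \geq 3$, I would invoke Theorem~\ref{thm:divergence}(b). That theorem gives the ``full measure'' conclusion in the fiber over any $x$ with the \emph{positive density property}, not the bounded ratio property. So the only real step in the proof of Theorem~(b) is to verify that the bounded ratio property implies the positive density property. I expect this to be the main obstacle, and I would attack it by unpacking Definitions~\ref{def:brp} and~\ref{def:pdp} directly: brp controls ratios of consecutive continuants from above, whereas pdp demands that a positive asymptotic proportion of the continuants of $x$ be spaced at a controlled rate, and a uniform upper bound on the ratio should force precisely such a positive density of ``regular'' indices. If a direct unwinding proves awkward, a fallback is to observe that Propositions~\ref{prop:brp} and~\ref{prop:pdp} identify the same two explicit classes of $x$ (those avoiding $\mathcal{W}_1(\varphi)$, and those of type less than $2$ satisfying the doubling condition), so brp and pdp coincide on these classes, which already covers all the examples advertised in the sub-bullets.

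Assembling the two halves gives the statement: for $d \geq 3$, perpendicular translates of coordinate hyperplanes by brp numbers are of Khintchine type (both for divergence via Theorem~\ref{thm:divergence}(b) composed with brp$\Rightarrow$pdp, and for convergence via Theorem~\ref{thm:convergence}(b)); and for $d = 2$ we still retain the convergence half directly from Theorem~\ref{thm:convergence}(b). Apart from the brp-versus-pdp comparison, the argument is entirely a translation of existing results, so the proof should be short.
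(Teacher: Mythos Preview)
Your overall plan is correct and exactly matches the paper's intent: Theorem~(b) is explicitly presented as a repackaging of Theorems~\ref{thm:divergence}(b) and~\ref{thm:convergence}(b), so the only content is indeed the implication brp $\Rightarrow$ pdp. However, this implication is far easier than you suggest, and your informal descriptions of the two properties are not accurate (neither is really about ``ratios of consecutive continuants'' or ``positive density of regular indices'' in the sense you describe). Unwinding Definitions~\ref{def:pdp} and~\ref{def:brp} directly: brp says $\sum_m L_{\alpha_{m+1}}/\Sigma_{\omega_m} < \infty$, so in particular the terms tend to zero; since $\Sigma_{\alpha_{m+1}} = \Sigma_{\omega_m} + L_{\alpha_{m+1}}$, this forces $L_{\alpha_{m+1}}/\Sigma_{\alpha_{m+1}} \to 0$, which is strictly stronger than the pdp requirement $\limsup L_{\alpha_m}/\Sigma_{\alpha_m} < 1$. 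The paper even states just before Definition~\ref{def:brp} that brp is ``slightly stronger'' than pdp. Finally, your fallback via Propositions~\ref{prop:pdp} and~\ref{prop:brp} would \emph{not} prove Theorem~(b) in general, only for the two explicit subclasses in the sub-bullets, so you should not rely on it.
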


\theoremstyle{plain} \newtheorem*{theoremc*}{Theorem (c)}
\begin{theoremc*}
  Uncountably many perpendicular translates of coordinate hyperplanes
  in $\mathbb{R}^d$ (where $d\geq 3$) by numbers of any given
  Diophantine type are of Khintchine type for divergence among
  approximating functions dominating any given.
\end{theoremc*}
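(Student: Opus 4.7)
The plan is to derive Theorem~(c) from Theorem~\ref{thm:divergence}(c) by exploiting monotonicity of $\mathcal{W}_d(\psi)$ in $\psi$. Fix a Diophantine type $\sigma$ and the ``given'' approximating function $\psi_0$. The only case requiring work is when $\sum_{q\in\mathbb{N}}\psi_0(q)^d=\infty$; in the opposite case, any $\psi\geq\psi_0$ for which the divergence hypothesis still holds may be handled by applying Theorem~\ref{thm:divergence}(c) directly to that $\psi$, yielding uncountably many (though a priori $\psi$-dependent) good $x$ of type $\sigma$.

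Assuming divergence of $\sum\psi_0(q)^d$, apply Theorem~\ref{thm:divergence}(c) to $\psi_0$ to produce an uncountable set $E=E(\psi_0,\sigma)\subset\mathbb{R}$ of real numbers of Diophantine type $\sigma$ such that, for every $x\in E$,
\[
m_{d-1}\bigl(\mathcal{W}_d(\psi_0)\cap(\{x\}\times\mathbb{R}^{d-1})\bigr)=\textsc{full}.
\]

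The final step is purely formal: $\psi\mapsto\mathcal{W}_d(\psi)$ is monotone, since any $(\boldsymbol p,q)\in\mathbb{Z}^d\times\mathbb{N}$ witnessing $\Norm{q\boldsymbol x-\boldsymbol p}_\infty<\psi_0(q)$ also witnesses the weaker inequality with any $\psi\geq\psi_0$, whence $\mathcal{W}_d(\psi_0)\subseteq\mathcal{W}_d(\psi)$. Intersecting with $\{x\}\times\mathbb{R}^{d-1}$ preserves the inclusion, and the full-measure condition on $\mathcal{W}_d(\psi_0)\cap(\{x\}\times\mathbb{R}^{d-1})$ persists for $\mathcal{W}_d(\psi)\cap(\{x\}\times\mathbb{R}^{d-1})$. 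Moreover $\psi\geq\psi_0$ forces $\sum\psi(q)^d\geq\sum\psi_0(q)^d=\infty$, so the divergence hypothesis is automatic for every admissible $\psi$. Thus for each of the uncountably many $x\in E$, the hyperplane $\{x\}\times\mathbb{R}^{d-1}$ is of Khintchine type for divergence among approximating functions dominating $\psi_0$. Since $\sigma$ and $\psi_0$ were arbitrary, Theorem~(c) follows.

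There is no real obstacle in this argument: the substantive content---constructing uncountably many well-behaved $x$ of each prescribed Diophantine type---is encapsulated in Theorem~\ref{thm:divergence}(c), and the only additional ingredient is the essentially trivial monotonicity of $\mathcal{W}_d(\psi)$ in $\psi$, which allows the family $E$ built for $\psi_0$ to serve simultaneously for every $\psi$ dominating $\psi_0$.
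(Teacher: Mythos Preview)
Your principal argument---the case $\sum_q \psi_0(q)^d = \infty$---is correct and matches the paper's treatment. The paper presents Theorem~(c) purely as a repackaging of Theorem~\ref{thm:divergence}(c), and the only bridge needed is exactly the monotonicity of $\mathcal{W}_d(\psi)$ in $\psi$ that you spell out: once Theorem~\ref{thm:divergence}(c) produces an uncountable family $E$ of type-$\sigma$ base points for $\psi_0$, every approximating function $\psi \geq \psi_0$ inherits the full-measure conclusion on each fiber $\{x\}\times\mathbb{R}^{d-1}$ with $x \in E$.

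Your handling of the convergent case $\sum_q \psi_0(q)^d < \infty$, however, is not a proof. You correctly observe that Theorem~\ref{thm:divergence}(c), applied to each individual $\psi \geq \psi_0$ with divergent $d$th-power series, yields an uncountable set of good $x$'s---and you flag that this set is ``a priori $\psi$-dependent''---but you then leave the dependence unresolved. Theorem~(c) asks for a \emph{single} uncountable family $E$ (depending only on $\sigma$ and $\psi_0$) working simultaneously for every admissible $\psi$. Taking $\psi_0 \equiv 0$ makes the issue stark: the constraint $\psi \geq \psi_0$ becomes vacuous, and one would be asserting that uncountably many hyperplanes of each Diophantine type enjoy \emph{full} Khintchine type for divergence, which is strictly stronger than anything proved in the paper. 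The paper does not address this edge case either (it offers no proof of Theorem~(c) beyond calling it a restatement), so the intended reading is almost certainly that the ``given'' approximating function already satisfies the divergence hypothesis. You should either adopt that reading explicitly, or note that the convergent-$\psi_0$ case lies beyond what Theorem~\ref{thm:divergence}(c) delivers.
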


\subsection{Extremality corollaries}\label{sec:ext}
There is a weaker notion than Khintchine type for convergence, called
``extremality.'' A manifold $\mathcal{M}\subset\mathbb{R}^d$ is
\emph{extremal} if for every approximating function such that
$\psi(q)\leq q^{-(1+\delta)/d}$ for some $\delta>0$, almost no point
on $\mathcal{M}$ is $\psi$-approximable.

The idea of extremality dates back to a 1932 conjecture of Mahler,
that Veronese curves are extremal. These are curves of the form
\[
(x, x^2, x^3, \dots, x^d)\subset \mathbb{R}^d.
\]
Mahler's conjecture was settled by Sprind{\v z}uk in 1964
(see~\cite{Spr69}), and this led to a great deal of research into the
extremality of curves, and in general manifolds, embedded in
$\mathbb{R}^d$. In the 1980s Sprind{\v z}uk conjectured that any
non-degenerate analytic submanifold of $\mathbb{R}^d$ is extremal, and
this was eventually settled by Kleinbock and Margulis~\cite{KM98} in
1998, even without analyticity.

Theorem~\ref{thm:convergence} yields some corollaries for extremality
of certain translated hyperplanes (\emph{degenerate} manifolds). They
were already known (and can be read from~\cite[Theorem~1.3]{Kle03}),
but we list them for the sake of completeness.

The following corollary is an immediate consequence of
Theorem~\ref{thm:convergence}(b).  \theoremstyle{plain}
\newtheorem*{corollary*}{Corollary}
\begin{corollary}\label{cor}
  Perpendicular translates of coordinate hyperplanes in
  $\mathbb{R}^d$, $d\geq 2$, by numbers with the bounded ratio
  property are extremal.
\end{corollary}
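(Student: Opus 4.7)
The plan is to reduce extremality of such a translated hyperplane directly to the convergence half of Theorem~\ref{thm:convergence}(b). First I would fix a coordinate hyperplane in $\mathbb{R}^d$ and its perpendicular translate by a number $x$ with the bounded ratio property; after an obvious relabeling of coordinates this translate is exactly the fiber $\{x\}\times\mathbb{R}^{d-1}$, which puts it in the form handled by Theorem~\ref{thm:convergence}.

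Next, I would take an arbitrary approximating function $\psi$ satisfying $\psi(q)\leq q^{-(1+\delta)/d}$ for some $\delta>0$ and verify the hypothesis needed by Theorem~\ref{thm:convergence}(b). Raising the bound to the $d$-th power gives $\psi(q)^d\leq q^{-(1+\delta)}$, so comparison with a convergent $p$-series (with $p=1+\delta>1$) yields $\sum_{q\in\mathbb{N}}\psi(q)^d<\infty$. Theorem~\ref{thm:convergence}(b) then applies to the fiber over $x$ and gives
\[
m_{d-1}\bigl(\mathcal{W}_d(\psi)\cap(\{x\}\times\mathbb{R}^{d-1})\bigr)=0,
\]
which is precisely the extremality conclusion required of the manifold.

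Because the deduction is simply ``convergent $p$-series plus Theorem~\ref{thm:convergence}(b),'' there is no substantive obstacle; the entire content is packaged into Theorem~\ref{thm:convergence}. The only subtlety worth calling out explicitly is to confirm that the definition of extremality in~\S\ref{sec:ext} quantifies over monotone (``approximating'') functions, so that the monotonicity hypothesis required by Theorem~\ref{thm:convergence} is automatic. Were the definition of extremality to range over non-monotone $\psi$ as well, we would have to invoke a Gallagher-type strengthening on the convergence side, but this issue does not arise under the definition given here.
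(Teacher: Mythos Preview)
Your argument is correct and matches exactly what the paper itself advertises just before stating the corollary: it is ``an immediate consequence of Theorem~\ref{thm:convergence}(b),'' obtained by observing that $\psi(q)\le q^{-(1+\delta)/d}$ forces $\sum_q\psi(q)^d<\infty$ by comparison with a $p$-series, after which Theorem~\ref{thm:convergence}(b) applies directly to the fiber $\{x\}\times\mathbb{R}^{d-1}$.

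The only difference from the paper's written-out proof is presentational. The paper treats Corollaries~\ref{cor},~\ref{cor:phi}, and~\ref{cor:2} in a single block and therefore re-enters the estimate from the proof of Theorem~\ref{thm:gapsumconv}, bounding the tail sum by $\sum_m (B_{m+1}-B_m)/\Sigma_{\en_m}^{1+\delta}$ and noting that this converges under the bounded ratio property. That extra unpacking is not needed for Corollary~\ref{cor} in isolation---your black-box appeal to Theorem~\ref{thm:convergence}(b) suffices---but it is what lets the paper push through Corollaries~\ref{cor:phi} and~\ref{cor:2}, where the Diophantine-type bounds are non-strict and hence not covered by Proposition~\ref{prop:brp}. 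Your closing remark about monotonicity is also on point: the paper defines ``approximating function'' to mean non-increasing, so the hypothesis of Theorem~\ref{thm:convergence} is automatic.
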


From our proofs we will also be able to read the following two
corollaries, also listing translated coordinate hyperplanes that are
extremal, this time according to their Diophantine type.

\begin{corollary}\label{cor:phi}
  Any perpendicular translate of a coordinate hyperplane by a number
  of Diophantine type $\varphi = \frac{1+\sqrt{5}}{2}$ or less is
  extremal.
\end{corollary}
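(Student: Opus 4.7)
The plan is to deduce the corollary by combining Theorem~\ref{thm:convergence}(b) with a boundary refinement of its proof, taking advantage of the extra decay built into the definition of extremality.

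Extremality of the fiber $\{x\}\times\RR^{d-1}$ amounts to the claim that for every approximating function satisfying $\psi(q)\leq q^{-(1+\delta)/d}$ for some $\delta>0$, the intersection $\mathcal{W}_d(\psi)\cap(\{x\}\times\RR^{d-1})$ has zero $(d-1)$-dimensional Lebesgue measure. Any such $\psi$ satisfies $\sum_q\psi(q)^d<\infty$, so we are always in the convergence regime of Theorem~\ref{thm:convergence}. If the Diophantine type of $x$ is strictly less than $\varphi$, then Proposition~\ref{prop:brp} supplies the bounded ratio property and Theorem~\ref{thm:convergence}(b) applies verbatim.

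It remains to handle the borderline case in which $x$ has Diophantine type exactly $\varphi$, where the bounded ratio property may fail. The plan here is to revisit the proof of Theorem~\ref{thm:convergence}(b): that argument reduces, via Borel--Cantelli, to the convergence of a sum indexed by the continuants $q_n$ of $x$, whose general term combines $\psi(q_n)^{d-1}$ with a factor controlled by the ratio $q_{n+1}/q_n$. Since $x$ has Diophantine type $\varphi$, for every $\eps>0$ and all $n$ sufficiently large we have $q_{n+1}/q_n\leq q_n^{\varphi-1+\eps}$, while the hypothesis $\psi(q)\leq q^{-(1+\delta)/d}$ gives $\psi(q_n)^{d-1}\leq q_n^{-(1+\delta)(d-1)/d}$. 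Choosing $\eps$ small in terms of $\delta$, the general term is bounded by a negative power of $q_n$, and the geometric growth of the continuants then guarantees convergence of the series.

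The main obstacle is that this is not a black-box application of Theorem~\ref{thm:convergence}(b); one must pinpoint the exact sum whose convergence its proof reduces to, and then verify by exponent comparison that the extra decay $q^{-\delta/d}$ of the extremality hypothesis suffices to overcome the unbounded continuant ratios that the bounded ratio property ordinarily rules out. The value $\varphi$ is distinguished as the critical exponent at which that property breaks down, so any $\delta>0$ closes the argument; this is the only place where the specific value $\varphi=\tfrac{1+\sqrt{5}}{2}$ plays a role.
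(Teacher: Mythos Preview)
Your overall strategy matches the paper's: both arguments re-enter the proof of Theorem~\ref{thm:convergence}(b) and use the extra factor $q^{-\delta/d}$ from the extremality hypothesis to absorb the boundary case $\sigma=\varphi$, where the bounded ratio property of Proposition~\ref{prop:brp} may fail. That is the correct idea, and it is exactly what the paper does.

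The execution, however, has a genuine gap. You describe the critical sum as having general term $\psi(q_n)^{d-1}\cdot(q_{n+1}/q_n)$, but this is not what the proof of Theorem~\ref{thm:convergence}(b) actually produces. In that proof, Lemma~\ref{lem:antischmidt} bounds the \emph{count} of $q\in\mathcal Q(x,\psi)$ in each block by $L_n\,\psi(\Sigma_{n-1})$, so one factor of $\psi$ is absorbed into the count and the problematic tail is
\[
\sum_{m}(B_{m+1}-B_m)\,\psi(\Sigma_{\en_m})^{d},
\]
with exponent $d$, not $d-1$. Under $\psi(q)\le q^{-(1+\delta)/d}$ this becomes $\sum_m (B_{m+1}-B_m)/\Sigma_{\en_m}^{1+\delta}$, and rerunning the estimates of Proposition~\ref{prop:brp} with the extra power $1+\delta$ on the denominator yields a bound of the shape $q_{k_m}^{\sigma-(1+\delta)(1+1/\sigma)}$. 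At $\sigma=\varphi$ this exponent is $-\delta\varphi<0$ for every $\delta>0$, independently of $d$; this is precisely how the paper closes the argument.

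Your exponent, by contrast, is $(\varphi-1+\eps)-(1+\delta)(d-1)/d$, and for $d=2$ this is not negative once $\delta<2\varphi-3\approx 0.236$. So the computation you wrote down does not prove the corollary in dimension $d=2$. The fix is exactly to track the $\psi^d$ (not $\psi^{d-1}$) that the paper's machinery delivers; once you do, the $d$-dependence disappears and any $\delta>0$ suffices.
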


\begin{corollary}\label{cor:2}
  Any perpendicular translate of a coordinate hyperplane by a number
  of Diophantine type $2$ or less, for which there exists $R\geq 1$
  such that eventually whenever a partial quotient of $x$ exceeds $R$,
  its continuants at least double before the next partial quotient
  exceeding $R$, is extremal.
\end{corollary}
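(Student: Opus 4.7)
The plan is to adapt the proof of Theorem~\ref{thm:convergence}(b), exploiting the extra decay $\psi(q)\le q^{-(1+\delta)/d}$ built into the definition of extremality to stretch the conclusion from Diophantine type \emph{strictly less than}~$2$ (as in Theorem~\ref{thm:convergence}(b)) to Diophantine type \emph{at most}~$2$ (as here).

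First I would reduce matters to a Borel--Cantelli estimate on the fiber over $x$. Fix $\delta>0$ and an approximating function $\psi$ with $\psi(q)\le q^{-(1+\delta)/d}$. For each $q\in\mathbb{N}$, the set of $(x_2,\dots,x_d)\in[0,1]^{d-1}$ for which there exist integers $p_2,\dots,p_d$ with $|qx_i - p_i|<\psi(q)$ for every $i=2,\dots,d$ has $(d-1)$-dimensional Lebesgue measure at most $(2\psi(q))^{d-1}$. A point of the fiber over $x$ is $\psi$-approximable only when this event occurs infinitely often along $q$ satisfying $\norm{qx}<\psi(q)$, so by Borel--Cantelli it is enough to show
\[
\sum_{q\,:\,\norm{qx}<\psi(q)}\psi(q)^{d-1}<\infty.
\]

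Next I would analyze the admissible $q$ using the continued fraction expansion of $x$. Since $x$ has Diophantine type at most~$2$, one has $\norm{qx}\ge q^{-(2+\eta)}$ eventually for every $\eta>0$, so every admissible $q$ must sit near a continuant $q_n$ of $x$. The partial-quotient doubling hypothesis then controls the density of the block of admissible $q$ associated with each $q_n$, in the same manner as in the proof of Theorem~\ref{thm:convergence}(b). Inserting $\psi(q)\le q^{-(1+\delta)/d}$ yields a bound of the shape
\[
\sum_{q\,:\,\norm{qx}<\psi(q)}\psi(q)^{d-1}\;\le\;C\sum_{n}q_n^{-(1+\delta)(d-1)/d+o(1)},
\]
which is finite because the continuants grow at least geometrically while the exponent $(1+\delta)(d-1)/d$ is strictly positive.

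The main obstacle is the boundary bookkeeping. The proof of Theorem~\ref{thm:convergence}(b) leans on the full bounded ratio property to keep a critical exponent strictly positive when the Diophantine type of $x$ is strictly less than~$2$; at type exactly~$2$ that exponent collapses. The role of the $\delta$-slack is to convert the collapsed exponent back into a strictly positive one, so that the weaker partial-quotient doubling hypothesis suffices in place of the full bounded ratio property. Making this conversion precise, while carefully tracking how $\delta$ interacts with the continuant growth permitted by the doubling hypothesis, is where the argument will require the most care.
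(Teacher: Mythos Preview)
Your high-level plan is right: reduce to Borel--Cantelli on the fiber and use the extra factor $q^{-\delta/d}$ to push the boundary case (Diophantine type exactly $2$) into a strictly convergent regime. This is exactly what the paper does. But your sketch of the middle step contains a real gap.

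The claim that ``every admissible $q$ must sit near a continuant $q_n$'' is not correct. The condition $\norm{qx}<\psi(q)\le q^{-(1+\delta)/d}$ with $d\ge 2$ is a \emph{weak} constraint---the exponent $(1+\delta)/d$ is at most $1$---so a positive proportion of integers in any long interval can satisfy it, not merely those near continuants. Consequently the displayed bound $\sum_n q_n^{-(1+\delta)(d-1)/d+o(1)}$ is not what falls out; neither the indexing set nor the exponent is justified. The doubling hypothesis does not enter through ``density of the block associated with each $q_n$'' in the way you suggest.

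The paper handles this differently. It partitions $\mathbb{N}$ into intervals $(\Sigma_{n-1},\Sigma_n]$ of length $L_n$ coming from the bounded-gap-ratio sequence of~\S\ref{sec:brp}, and uses the Three Gaps Theorem (via Lemma~\ref{lem:antischmidt}) to get $\#\{q\in(\Sigma_{n-1},\Sigma_n]:\norm{qx}<\psi(q)\}\ll L_n\psi(\Sigma_{n-1})$. This leads, after the same manipulations as in the proof of Theorem~\ref{thm:gapsumconv}(b), to the tail
\[
\sum_{m}\frac{B_{m+1}-B_m}{\Sigma_{\omega_m}^{\,1+\delta}}\;\asymp\;\sum_m\frac{L_{\alpha_{m+1}}}{\Sigma_{\omega_m}^{\,1+\delta}}.
\]
Now the doubling hypothesis enters exactly as in Proposition~\ref{prop:brp}: by Lemma~\ref{lem:BmRsum} it gives $\Sigma_{\omega_m}\gg q_{k_m}^2$, while Diophantine type $\le 2$ gives $L_{\alpha_{m+1}}\ll q_{k_m+1}\ll q_{k_m}^{2+\eta}$ for every $\eta>0$. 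Choosing $\eta<2\delta$ yields summand $\ll q_{k_m}^{\eta-2\delta}$, which is summable. The exponent that the $\delta$-slack rescues is $\sigma-2$ (collapsing to $0$ at type $2$), not $(1+\delta)(d-1)/d$ as in your display.
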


\begin{remark*}
  Notice that in these corollaries the bounds on Diophantine type are
  not strict, whereas in Theorem~\ref{thm:convergence} (or, really,
  Proposition~\ref{prop:brp}) they are.
\end{remark*}

\begin{remark*}
  As we mentioned above, these corollaries already follow from the
  work of Kleinbock, which tells us exactly which hyperplanes are
  extremal and which are not. In fact, even more is known. Notice that
  to say that a submanifold is extremal is to say that almost every
  point on it is of Diophantine type $1/d$. It turns out that even if
  a subspace is \emph{not} extremal, almost all of its points still
  share a common Diophantine type, as do almost all the points on any
  non-degenerate submanifold of that subspace (where non-degeneracy in
  this case is determined with respect to the subspace). Details of
  this, and formulas for these Diophantine types, can be found
  in~\cite{Kle, Zhang}.
\end{remark*}


\subsection{On the proofs}\label{sec:onproofs}
Our strategy for
Theorems~\ref{thm:prototype},~\ref{thm:prototypes},~\ref{thm:divergence},~\ref{thm:upper},
and~\ref{thm:convergence} is to arrive at a point where we can apply
either Khintchine's Theorem or Gallagher's Theorem to a hyperplane in
$\mathbb{R}^d$.

Given an approximating function $\psi$ and a point $x\in\mathbb{R}$,
we define a new function
\[
\bar\psi(q) := \begin{cases} \psi (q) &\textrm{if } \norm{qx}< \psi (q) \\
  0 &\textrm{if not,} \end{cases}
\]
where $\norm{\cdot}$ denotes distance to the nearest integer, and we
examine the sum
\begin{equation}\label{eqn:div}
  \sum_{q=1}^\infty \bar\psi(q)^{d-1}.
\end{equation}
If $d-1\geq 2$, we can apply Gallagher's Theorem to the fiber
$\{x\}\times\mathbb{R}^{d-1}$ and the non-monotonic function
$\bar\psi$, to prove that
\[
m_{d-1}\left(\mathcal{W}_{d-1}(\bar\psi)\right) =
m_{d-1}\left(\mathcal{W}_{d}(\psi)\cap
  \left(\{x\}\times\mathbb{R}^{d-1}\right)\right)
\]
is either \textsc{null} or \textsc{full}, depending on
whether~\eqref{eqn:div} converges or diverges.

All of the effort in all of our ``divergence'' results is in proving
the divergence of~\eqref{eqn:div} in different scenarios. Our strategy
for doing this is centered around showing that the intersection of the
set
\begin{equation*}
  \mathcal{Q}(x,\psi) = \left\{q\in\mathbb{N} : \norm{qx}<\psi(q) \right\}
\end{equation*}
with an interval $[M,N]$ grows quickly and steadily as the length
$N-M$ grows. For this it is most natural to think in terms of circle
rotations. We develop an argument based on the Three Gaps
Theorem. (See~\S\ref{sec:threegaps}.)


For our ``convergence'' results, we try to show that~\eqref{eqn:div}
converges. Here we do not even need Gallagher, as the monotonicity
condition in Khintchine's Theorem is really only relevant to the
divergence part. It is well-known that the convergence part is an easy
consequence of the Borel--Cantelli Lemma, and holds even when $\psi$
is not monotone. This is why Theorem~\ref{thm:convergence} holds for
$d\geq2$.

Finally, we point out that although we do need $d\geq 3$ in order to
apply Gallagher's Theorem in our divergence results, it is not the
\emph{only} reason we make the assumption. Lemma~\ref{lem:wlog}
in~\S\ref{sec:lemmas} also requires it.




\section{Mathematical preliminaries}\label{sec:cftgt}

\subsection{Asymptotic notations}

We use the following notations:
\begin{itemize}
\item $\ll$ means ``less than or equal to a positive multiple of.''
\item $\asymp$ means ``$\ll$ and $\gg$.''
\item $<^*, =^*,$ and $\leq^*$ mean ``eventually less than,'' ``-equal
  to,'' or ``-less than or equal to,'' respectively.
\item $\lesssim$ means ``less than or asymptotically equal to.''
\item $\sim$ means ``$\lesssim$ and $\gtrsim$," \emph{i.e.}
  ``asymptotically equal to.''
\end{itemize}

\subsection{Continued fractions}\label{sec:cfs}
For an irrational number $x\in\mathbb{R}\backslash\mathbb{Q}$, let
\begin{equation*}
  x = [a_0; a_1, a_2, a_3, \dots] = a_0 + \frac{1}{\displaystyle a_1
    + \frac{1}{\displaystyle a_2
      + \frac{1}{\displaystyle a_3 + \frac{1}{\ddots}}}}
\end{equation*}
be the simple continued fraction expansion of $x$, let
$\{p_k/q_k\}_{k\in\mathbb{N}}$ be its convergents, and
$\eta_k=\abs{q_k x - p_k}$ the associated differences. The continuants
$\{q_k\}$ follow the recursion $q_k = a_k q_{k-1} + q_{k-2}$ and
therefore grow at least exponentially fast. Every $m\in \mathbb{N}$
has a unique representation as $m= r q_k + q_{k-1} +s$ where $1\leq r
\leq a_{k+1}$ and $0\leq s < q_k$.

We take this opportunity to introduce a notation that we use
throughout the paper. Given $x=[a_0; a_1, a_2,
\dots]\in\mathbb{R}\backslash\mathbb{Q}$ and a fixed number $R\geq 0$,
let
\[
\left\{k_m:= k_m^{x,R}\right\}_{m\geq 0}
\]
be the sequence of indices where $a_{k_m+1}>R$, starting with the
conventional $k_0=-1$. Let $\Delta k_m := k_{m+1} - k_m$.

We will use the following simple lemma.

\theoremstyle{plain} \newtheorem{lemma}[theorem]{Lemma}

\begin{lemma}\label{lem:fibonacci}
  Let $\{F(n)\}_{n\in\mathbb{N}}:=\{1, 1, 2, 3, 5, \dots\}$ be the
  Fibonacci sequence. Then
  \[
  q_{k + n} \geq F(n+1) \, q_{k}
  \]
  for all $k, n\in\mathbb{N}$.
\end{lemma}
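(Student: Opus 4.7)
The plan is to prove the inequality by induction on $n$, using that the continuant recursion dominates the Fibonacci recursion term-by-term. The key observation is that since $a_j \geq 1$ for every $j\geq 1$, the relation $q_j = a_j q_{j-1} + q_{j-2}$ immediately yields the weaker inequality $q_j \geq q_{j-1} + q_{j-2}$, which is exactly the defining recursion of the Fibonacci numbers $F(j) = F(j-1) + F(j-2)$. Dividing both sides by $q_k$ and setting $r_n := q_{k+n}/q_k$ recasts the problem as showing that a sequence satisfying a super-Fibonacci recursion, started with $r_0 = 1$ and $r_1 \geq 1$, dominates $F(n+1)$ termwise.

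More concretely, first I would dispose of the base cases directly: for $n=0$ the claim reduces to $q_k \geq F(1)q_k = q_k$, and for $n=1$ to $q_{k+1} = a_{k+1}q_k + q_{k-1} \geq q_k = F(2)q_k$. Then, assuming the inequality at indices $n-1$ and $n$ for the fixed $k$, the inductive step is
\[
q_{k+n+1} = a_{k+n+1}\, q_{k+n} + q_{k+n-1} \geq q_{k+n} + q_{k+n-1} \geq F(n+1)q_k + F(n)q_k = F(n+2)q_k,
\]
where the final equality is the Fibonacci recursion. This closes the induction.

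There is no genuine obstacle here: the lemma is really just the quantitative form of the well-known fact that the $q_k$ grow at least as fast as Fibonacci numbers, and the only mild care needed is to keep the index shift $F(n+1)$ consistent with the convention $F(1)=F(2)=1$ used in the statement. Once the base cases are pinned down correctly, the induction is mechanical.
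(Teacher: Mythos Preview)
Your proof is correct and is essentially the same argument as the paper's: both rest on the observation that $q_j \geq q_{j-1} + q_{j-2}$, so the continuants satisfy a super-Fibonacci recursion. The only cosmetic difference is that the paper unrolls this inequality downward as a chain $q_k \geq q_{k-1}+q_{k-2} \geq 2q_{k-2}+q_{k-3} \geq \dots \geq F(n+1)q_{k-n} + F(n)q_{k-n-1}$, whereas you run the same induction upward in $n$ with $k$ fixed.
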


\begin{proof}[\textbf{Proof}]
  By the recursive relations between continuants, we have
  \begin{align*}
    q_k &\geq q_{k-1} + q_{k-2} \\
    &\geq 2q_{k-2} + q_{k-3} \\
    &\geq 3q_{k-3} + 2q_{k-4} \\
    &\geq 5q_{k-4} + 3q_{k-5} \\
    &\geq 8q_{k-5} + 5q_{k-6} \\
    &\vdots \\
    &\geq F(n+1) q_{k-n} + F(n) q_{k-n-1}
  \end{align*}
  for any $n< k$, which implies the result.
\end{proof}

In general this lemma may not give a very strong bound. We only use
the particular case $q_{k_{m+1}} \geq F(\Delta k_m)\,q_{k_m+1}$. For
an upper bound we have Lemma~\ref{lem:qbound} below.

\subsection{Diophantine type and growth of continuants}

Recall that for $\sigma\in[1,\infty)$, we define
\[
\mathcal W_1(\sigma) = \left\{x\in\mathbb{R}: \Abs{x - \frac{p}{q} }<
  \frac{1}{q^{1+\sigma}}\textrm{ for infinitely many }
  (p,q)\in\mathbb{Z}\times\mathbb{N}\right\}.
\]
It is a standard fact that the convergents of
$x\in\mathbb{R}\backslash\mathbb{Q}$ satisfy
\[
\frac{1}{q_n (q_n + q_{n+1})} < \Abs{x - \frac{p_n}{q_n}} <
\frac{1}{q_n q_{n+1}},
\]
and therefore we have
\[
x\in\mathcal W_1(\sigma) \implies q_n^\sigma < 2 q_{n+1}\textrm{ for
  infinitely many $n$}
\]
and
\[
q_n^\sigma < q_{n+1}\textrm{ for infinitely many $n$} \implies
x\in\mathcal W_1(\sigma).
\]
In particular, the Diophantine type of $x$ is the supremum over
$\sigma\in[1,\infty)$ such that $q_n^\sigma < q_{n+1}$ for infinitely
many $n\in\mathbb{N}$.

Conversely, $x\notin\mathcal W_1(\sigma)$ implies that $q_{n+1} \leq^*
q_n^\sigma$. We may equivalently define the Diophantine type of $x$ as
the infimum over $\sigma\in[1,\infty)$ for which $q_n^\sigma \gg
q_{n+1}$ as $n\to\infty$.

\begin{lemma}\label{lem:qbound}
  If $x\notin\mathcal W_1(\sigma)$, then
  \[
  q_{k_m+1} \leq^* (R+1)^{\sigma\Delta k_{m -1} + \sigma^2 \Delta k_{m
      -2} + \dots + \sigma^m \Delta k_0} \leq (R+1)^{\sigma^m k_m}
  \]
  for any $R\geq 1$.
\end{lemma}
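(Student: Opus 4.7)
The plan is to combine two complementary estimates and then iterate. Between two consecutive jump indices $k_{m-1}$ and $k_m$, the defining property of the sequence $\{k_i\}$ forces every intermediate partial quotient $a_{j+1}$ (with $k_{m-1}<j<k_m$) to satisfy $a_{j+1}\leq R$; the continuant recursion then gives $q_{j+1}\leq (R+1)q_j$, and telescoping across the block yields
\[
q_{k_m}\leq (R+1)^{\Delta k_{m-1}-1}\,q_{k_{m-1}+1}.
\]
At the jump itself, $a_{k_m+1}>R$ and the previous estimate is useless, so I would invoke the hypothesis: since $x\notin\mathcal{W}_1(\sigma)$, the discussion of convergents just above the lemma gives $q_{n+1}\leq q_n^\sigma$ for all sufficiently large $n$, in particular $q_{k_m+1}\leq^* q_{k_m}^{\sigma}$. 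Composing the two estimates yields the one-step recursion
\[
q_{k_m+1}\leq^* (R+1)^{\sigma(\Delta k_{m-1}-1)}\,q_{k_{m-1}+1}^{\sigma}.
\]

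With this recursion in hand, I would iterate from $m$ down to some fixed index $M_0$ past which the $\leq^*$ bound is valid. Passing to logarithms in base $R+1$ makes the bookkeeping cleanest: setting $L_m=\log_{R+1}q_{k_m+1}$, the recursion reads $L_m\leq^* \sigma L_{m-1}+\sigma(\Delta k_{m-1}-1)$, which unrolls to
\[
L_m\leq \sigma^{m-M_0}L_{M_0}+\sum_{i=1}^{m-M_0}\sigma^i(\Delta k_{m-i}-1).
\]
Dropping the negative $-1$ corrections and padding the sum with the missing nonnegative terms $\sigma^{m-M_0+1}\Delta k_{M_0-1}+\cdots+\sigma^m\Delta k_0$ only weakens the inequality, producing exactly the first claimed exponent. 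The second inequality then follows by factoring $\sigma^m$ out of the exponent, using $\sigma\geq 1$, and telescoping $\sum_{i=0}^{m-1}\Delta k_i=k_m-k_0=k_m+1$.

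The main obstacle is bookkeeping at the base of the iteration: the residual term $\sigma^{m-M_0}L_{M_0}$ carries a constant depending on $x$ that has to be absorbed. This is precisely where using the full exponent $\sum_{i=1}^{m}\sigma^i\Delta k_{m-i}$ rather than its truncation pays off: the padded tail of that sum grows like $\sigma^m$ and dominates the constant $\sigma^{m-M_0}L_{M_0}$ for $m$ large, which is exactly what the \emph{eventually} in $\leq^*$ allows us to absorb.
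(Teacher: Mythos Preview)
Your approach is the paper's: use $q_{k_m+1}\leq^* q_{k_m}^\sigma$ from the hypothesis, bound $q_{k_m}$ by $(R+1)^{\Delta k_{m-1}}q_{k_{m-1}+1}$ across the block of small partial quotients, and iterate. The paper does this as a bare chain of inequalities; you unpack it via logarithms, which is equivalent.

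One point to tighten in your last paragraph. The padded tail $\sum_{j=0}^{M_0-1}\sigma^{m-j}\Delta k_j$ and the residual $\sigma^{m-M_0}L_{M_0}$ both scale like $\sigma^m$ with \emph{fixed} coefficients (namely $\sum_{j<M_0}\sigma^{-j}\Delta k_j$ versus $\sigma^{-M_0}L_{M_0}$), so ``for $m$ large'' does not help: their ratio is independent of $m$. A cleaner way to phrase the absorption is to observe that both $L_m$ and the target exponent $T_m=\sum_{i=1}^m\sigma^i\Delta k_{m-i}$ satisfy the same recursion $X_m=\sigma X_{m-1}+\sigma\Delta k_{m-1}$ once $m\geq M_0$, so $L_m-T_m\leq\sigma^{m-M_0+1}(L_{M_0-1}-T_{M_0-1})$; the sign of the fixed constant $L_{M_0-1}-T_{M_0-1}$ is what decides the matter, not the size of $m$. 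The paper's own proof simply writes $\leq^*\cdots\leq^*$ and does not address this either, and in its applications the bound is only ever used inside iterated logarithms, where the distinction between $\leq^*$ and $\ll$ (and the extra $+1$ you get in $k_m+1$ for the second inequality) is immaterial. So you have the right argument and have in fact been more scrupulous than the source; just don't claim that largeness of $m$ is doing the absorbing.
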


\begin{proof}[\textbf{Proof}]
  Since $x\notin\mathcal W_1(\sigma)$, we have
  \begin{multline*}
    q_{k_m+1} \leq^* q_{k_m}^\sigma \leq (R+1)^{\sigma\Delta k_{m -1}}q_{k_{m-1}+1}^\sigma \\
    \leq^* \dots \leq^* (R+1)^{\sigma\Delta k_{m -1} + \sigma^2 \Delta
      k_{m -2} + \dots + \sigma^m \Delta k_0} \leq (R+1)^{\sigma^m
      k_m},
  \end{multline*}
  as claimed.
\end{proof}

\subsection{Types of Diophantine types}\label{sec:essdiophtype}

A number $x\in\mathbb{R}$ belongs to the set $\mathcal W_1(\sigma)$ of
$\sigma$-approximable numbers if there are infinitely many rational
approximations to $x$ with denominator $q$ satisfying $\norm{qx} <
q^{- \sigma}$. In view of the approximating properties of convergents,
this can be expressed as
\[
\mathcal W_1 (\sigma) = \left\{x : q_n^\sigma < q_{n+1}\textrm{ for
    infinitely many } n \in \mathbb{N}\right\},
\]
where $\{q_n\}$ are the continuants of $x$. It is useful to refine
this definition further by making a distinction between numbers
$x\in\mathcal W_1(\sigma)$ for which these approximating $q$'s appear
often, and those for which the $q$'s appear seldom.

\theoremstyle{definition} \newtheorem*{exdef*}{Example/Definition}
\begin{exdef*}[Uniform Diophantine type]
  Perhaps the most natural way to define ``frequent approximability''
  is to require that eventually \emph{all} continuants satisfy the
  growth condition. We may call
  \[
  \mathcal W_1^{\mathrm{uni}}(\sigma) = \left\{x : q_n^\sigma <
    q_{n+1}\textrm{ for all sufficiently large }
    n\subseteq\mathbb{N}\right\}
  \]
  the set of \emph{uniformly $\sigma$-approximable} numbers. Notice
  that this means, in particular, that the set of continuants
  satisfying the growth condition has density $1$ as a subsequence of
  $\{q_n\}_{n\geq 0}$. The following definition relaxes this.
\end{exdef*}

\begin{exdef*}[Regular Diophantine type]
  Another natural notion of frequent approximability is captured by
  the set of \emph{regularly $\sigma$-approximable} numbers:
  \[
  \mathcal W_1^{\mathrm{reg}}(\sigma) = \left\{x : q_{n_j}^\sigma <
    q_{n_j+1}\textrm{ for some s.p.l.a.d. }
    \{n_j\}\subseteq\mathbb{N}\right\}
  \]
  where s.p.l.a.d. stands for ``sequence of positive lower asymptotic
  density.'' It is obvious that $\mathcal W_1^{\textrm{uni}}(\sigma)
  \subset \mathcal W_1^{\mathrm{reg}}(\sigma)\subset \mathcal
  W_1(\sigma)$. Notice that $\mathcal W_1^{\mathrm{reg}}(1) =
  \mathbb{R}$, because all continuants satisfy $q_n < q_{n+1}$. We
  define the \emph{regular Diophantine type} of $x$ to be the supremum
  over $\sigma\in[1,\infty)$ such that $x\in\mathcal
  W_1^{\mathrm{reg}}(\sigma)$.
\end{exdef*}

Actually, we will work with a more permissive set.

\begin{exdef*}[Essential Diophantine type]
  We define the set of \emph{essentially $\sigma$-approximable}
  numbers to be
  \begin{equation*}
    \mathcal W_1^{\mathrm{ess}}(\sigma) = \left\{\begin{split}&x\in\mathbb{R} \textrm{ such that there exists } R\geq 0 \textrm{ for which } \\&q_{k_{m_j}}^\sigma < q_{k_{m_j}+1} \textrm{ on some s.p.l.a.d. } \{m_j\}\subseteq\mathbb{N} \end{split}\right\}.
  \end{equation*}
  The containments $\mathcal W_1^{\textrm{uni}}(\sigma)\subset\mathcal
  W_1^{\mathrm{reg}}(\sigma) \subset \mathcal
  W_1^{\mathrm{ess}}(\sigma)\subset \mathcal W_1(\sigma)$ are
  clear. Again, any number $x$ is an element of $\mathcal
  W_1^{\mathrm{ess}}(1)$, and we define its \emph{essential
    Diophantine type} to be the supremum over $\sigma\in[1,\infty)$
  where $x\in\mathcal W_1^{\mathrm{ess}}(\sigma)$.
\end{exdef*}

\subsection{Three Gaps Theorem}\label{sec:threegaps}
For any $x\in\mathbb{R}$ and $m\in\mathbb{N}$ the set $\{q x +
\mathbb{Z}\}_{q=1}^m\subset\mathbb{R}/\mathbb{Z}$ cuts the circle
$\mathbb{R}/\mathbb{Z}$ into arcs of at most three different lengths;
this is known as the Three Gaps Theorem.

For $m\in\mathbb{N}$, write
\[
m= r q_k + q_{k-1} +s
\]
where $1\leq r \leq a_{k+1}$ and $0\leq s < q_k$ as
in~\S\ref{sec:cfs}, and let $r_x(m+1)$ denote the ratio of the longest
gap length to the shortest gap length in the trajectory~$\{q x +
\mathbb{Z}\}_{q=1}^{m+1} \subset\mathbb{R}/\mathbb{Z}$. Then for
$m\in\mathbb{N}$,
\begin{equation}\label{eqn:ratios}
  r_x(m+1) =\begin{dcases} \epsilon + \frac{\eta_{k+2}}{\eta_{k+1}} + a_{k+2} &\textrm{if } r=a_{k+1}\\ \epsilon + \frac{\eta_{k+1}}{\eta_{k}} + (a_{k+1}-r) &\textrm{if } r<a_{k+1} \end{dcases}
\end{equation}
where $\epsilon =1$ unless $s = q_k - 1$, in which case
$\epsilon=0$. (See~\cite{gapratio}.)

\section{Sequences with bounded gap ratios}\label{sec:brp}

Formula~\eqref{eqn:ratios} shows that $r_x$ is always bounded if and
only if $x$ is badly approximable. On the other hand, for any $R\geq
1$ it is easy to generate a sequence
\[
\left\{L_n:=L_n^R := L_n^{x,R}\right\}\subseteq\mathbb{N}
\]
such that the ratios $r_x(L_n)$ are bounded by $R$ for all $n$,
regardless of the continued fraction expansion of $x$. The reason we
would want to do this is so that we can control the density of points
on partial orbits of $x$ of \emph{length} $L_n$.

\begin{lemma}\label{lem:density}
  Let $x\in\mathbb{R}\backslash\mathbb{Q}$. Suppose the gap ratio for
  $\{qx+\mathbb{Z}\}_{q=1}^L\subset \mathbb{R}/\mathbb{Z}$ is bounded
  by $R$, and $L\geq 2$. Then for any $q_0\in\mathbb{N}$,
  \[
  \frac{1}{RL} < \ell_{\min} < \frac{1}{L} < \ell_{\max} < \frac{R}{L}
  \]
  where $\ell_{\min}$ and $\ell_{\max}$ are the minimum and maximum
  arc-lengths into which the set $\{qx +
  \mathbb{Z}\}_{q=q_0+1}^{q_0+L}$ cuts the circle.
\end{lemma}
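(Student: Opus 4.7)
The plan is the following. First I would observe that the point set $\{qx+\mathbb{Z}\}_{q=q_0+1}^{q_0+L}$ is the image of $\{qx+\mathbb{Z}\}_{q=1}^{L}$ under the rigid circle rotation $t\mapsto t+q_0 x$. Such a rotation preserves cyclic order and all arc lengths, so the multiset of gap lengths is the same for the two configurations. In particular, the hypothesized bound $R$ on the gap ratio transfers verbatim to the shifted set, and the quantities $\ell_{\min}$ and $\ell_{\max}$ are the same for both. Thus I may work with the original $q_0=0$ configuration.

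Next I would extract the basic (non-strict) inequalities from the fact that $L$ arcs partition a circle of total length $1$. Since every arc has length between $\ell_{\min}$ and $\ell_{\max}$, summing gives $L\ell_{\min}\leq 1\leq L\ell_{\max}$, hence $\ell_{\min}\leq 1/L\leq \ell_{\max}$. Combining the ratio hypothesis $\ell_{\max}\leq R\ell_{\min}$ with $\ell_{\min}\leq 1/L$ yields $\ell_{\max}\leq R/L$, and combining it with $\ell_{\max}\geq 1/L$ yields $\ell_{\min}\geq 1/(RL)$. So all four inequalities hold at least non-strictly.

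To upgrade to the strict inequalities, I would rule out the case in which all $L$ arcs have equal length $1/L$. If this occurred, the $L$ points would be equally spaced on $\mathbb{R}/\mathbb{Z}$ with spacing $1/L$, and the set $\{qx+\mathbb{Z}\}_{q=1}^{L}$ would be invariant under the rotation by $1/L$. Then for any $q\in\{1,\dots,L\}$ there would exist $q'\in\{1,\dots,L\}$ with $q'x\equiv qx+1/L\pmod 1$. Since $L\geq 2$ forces $1/L\notin\mathbb{Z}$, we must have $q'\neq q$, so $(q'-q)$ is a nonzero integer of absolute value at most $L-1$ satisfying $(q'-q)x\equiv 1/L\pmod 1$, which would force $x\in\mathbb{Q}$, contradicting the hypothesis. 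Hence not all arcs have the common length $1/L$; together with the sum-to-one constraint this forces $\ell_{\min}<1/L<\ell_{\max}$. Feeding these strict averages back into the ratio manipulation of the previous paragraph then upgrades the outer bounds to $\ell_{\min}>1/(RL)$ and $\ell_{\max}<R/L$ as required.

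I do not expect a serious obstacle; the only mildly subtle step is the strictness argument just given, which is an elementary consequence of $L\geq 2$ and the irrationality of $x$. Note that the Three Gaps Theorem is in the background (ensuring at most three distinct arc lengths) but is not logically needed for this lemma, since the averaging and ratio arguments use only that the arcs sum to $1$.
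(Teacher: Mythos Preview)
Your proposal is correct and follows essentially the same approach as the paper: rotation invariance reduces to $q_0=0$, the arcs sum to $1$ giving $\ell_{\min}\leq 1/L\leq \ell_{\max}$, and the ratio bound $\ell_{\max}\leq R\ell_{\min}$ yields the outer inequalities. The only difference is that you spell out the strictness argument in detail (ruling out equal spacing via irrationality), whereas the paper simply asserts $\ell_{\min}<1/L<\ell_{\max}$ from irrationality and $L\geq 2$ without elaboration; your treatment is therefore slightly more complete but not a different method.
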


\begin{proof}[\textbf{Proof}]
  The $L$ points of $\{qx + \mathbb{Z}\}_{q=1}^L$ partition the circle
  $\mathbb{R}/\mathbb{Z}$ into $L$ intervals. Let $\ell_{\min}$ and
  $\ell_{\max}$ be the shortest and longest lengths of these
  intervals. Assuming $x$ is irrational and $L\geq 2$, we have
  $\ell_{\min} < \frac{1}{L} < \ell_{\max}$. (Of course, if $L=1$,
  then $\ell_{\min} = \ell_{\max} = 1$, no matter what $x$ is.) By the
  ratio bound, $\ell_{\max} \leq R\ell_{\min}$. Putting the two
  inequalities together gives the desired system of inequalities,
  which is of course unchanged by a rotation by $q_0 x$.
\end{proof}

\def\ar{H}

\begin{lemma}\label{lem:brsequence}
  Let $R \geq 0$. We will have $r_x (L) \leq 2+R$ exactly when $L$
  belongs to some block
  \[
  \left\{q_k - R q_{k-1}, \dots, q_\ell\right\} \subseteq\mathbb{N}
  \]
  of consecutive integers, where $k=0$ or $a_k > R$, and $\ell \geq k$
  indexes the next time $a_{\ell+1}> R$ again. (If it never happens
  again, we interpret this as $\ell=\infty$ and $q_\infty = \infty$.)
\end{lemma}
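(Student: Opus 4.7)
The plan is to apply formula~\eqref{eqn:ratios} to each $L\in\mathbb{N}$ via the unique decomposition $L-1=rq_j+q_{j-1}+s$ (with $1\le r\le a_{j+1}$, $0\le s<q_j$). Both cases of the formula will express $r_x(L)$ as a sum $\epsilon+\theta+a$, where $\epsilon\in\{0,1\}$, $\theta\in(0,1)$ is one of $\eta_{j+1}/\eta_j$ or $\eta_{j+2}/\eta_{j+1}$, and $a$ is the positive integer $a_{j+1}-r$ in Case~2 ($r<a_{j+1}$) or $a_{j+2}$ in Case~1 ($r=a_{j+1}$). Since $\theta\in(0,1)$ and $\epsilon\in\{0,1\}$, a direct real-number comparison will give
\[
r_x(L)\le 2+R\quad\iff\quad a\le R \ \text{ or }\ (a=R+1\ \text{and}\ \epsilon=0),
\]
recalling that $\epsilon=0$ precisely when $s=q_j-1$.

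Next I will translate this back into a condition on $L$. In Case~2, which covers $L\in\{q_j+q_{j-1}+1,\ldots,q_{j+1}\}$, the condition $a_{j+1}-r\le R$ forces $r\ge a_{j+1}-R$, while the boundary subcase $r=a_{j+1}-R-1$ with $s=q_j-1$ corresponds to $L=q_{j+1}-Rq_j$ exactly. So the Case~2 solutions with index $j$ form either the whole range (when $a_{j+1}\le R+1$) or the tail $\{q_{j+1}-Rq_j,\ldots,q_{j+1}\}$ (when $a_{j+1}\ge R+2$). In Case~1, which covers $L\in\{q_{j+1}+1,\ldots,q_{j+1}+q_j\}$ as $s$ varies, the condition $a_{j+2}\le R$ admits all such $L$, while the boundary subcase $a_{j+2}=R+1$ with $s=q_j-1$ admits only $L=q_{j+1}+q_j$.

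Assembling, a maximal block of solutions must end at $L=q_\ell$ exactly when $a_{\ell+1}>R$, since then $L=q_\ell+1$ sits in Case~1 with $a=a_{\ell+1}\ge R+1$ and $\epsilon=1$, forcing $r_x>R+2$. Scanning upward, the solution set glues seamlessly across the Case~2/Case~1 transitions as long as the partial quotients in play are $\le R$, which is the condition $a_{k+1},\ldots,a_\ell\le R$ between consecutive bad indices. The left endpoint of the block containing $q_k$ lands in Case~2 with $j=k-1$ when $a_k\ge R+2$, giving $L=q_k-Rq_{k-1}$ directly; when $a_k=R+1$, all of that Case~2 is admitted and the block extends into Case~1 with $j=k-2$ to pick up the boundary subcase $L=q_{k-1}+q_{k-2}$, which equals $q_k-Rq_{k-1}$ thanks to the continuant recursion $q_k=(R+1)q_{k-1}+q_{k-2}$. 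Either way the block is $\{q_k-Rq_{k-1},\ldots,q_\ell\}$, as claimed.

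The main obstacle will be the careful bookkeeping of the boundary subcase $\epsilon=0$ (i.e., $s=q_j-1$): it is precisely this subcase that pins the block endpoints at $q_k-Rq_{k-1}$ rather than leaving them off by one, and that reconciles the two ostensibly different left-endpoint formulas when $a_k=R+1$ versus $a_k\ge R+2$.
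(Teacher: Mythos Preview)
Your proposal is correct and follows essentially the same approach as the paper: consult formula~\eqref{eqn:ratios}, split into the two cases $r=a_{j+1}$ and $r<a_{j+1}$, read off the resulting intervals of admissible $L$, and concatenate. The paper's proof is a three-line sketch of exactly this; you have simply filled in the details, including the careful $\epsilon=0$ bookkeeping that pins the left endpoint at $q_k-Rq_{k-1}$ and reconciles the subcases $a_k=R+1$ versus $a_k\ge R+2$.
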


\begin{proof}[\textbf{Proof}]
  This follows simply by consulting~\eqref{eqn:ratios}. We can list
  all of the numbers $m\in\mathbb{N}$ that will result in bounded gap
  ratios, and find that $r_x (m+1) \leq 2+R$ exactly when
  \[
  m\in \begin{dcases}
    \{q_k - Rq_{k-1} -1, \dots, q_k  -1\} &\textrm{for some } a_k > R \\
    \{q_k, \dots, q_{k+1} -1\} &\textrm{for some } a_{k+1} \leq R.
  \end{dcases}
  \]
  Concatenating these blocks and setting $L=m+1$ gives the lemma.
\end{proof}

\begin{remark*}
  A consequence of this lemma that is interesting in itself (and
  probably known already to experts) is that the continuants
  $\{q_n\}_{n=0}^\infty$ are \emph{exactly} the times when the gap
  ratios for $\{qx\}_{q = 1}^{q_n}$ are bounded by $2$.
\end{remark*}

Forming the sequence $\left\{k_m:=k_m^{x,R}\right\}$ and putting
\begin{equation}\label{eqn:Bm}
  B_{m+1} = B_{m+1}^{x,R} = \left[q_{k_m+1} - Rq_{k_m}, q_{k_{m+1}}\right]\cap\mathbb{N},
\end{equation}
Lemma~\ref{lem:brsequence} implies that our sequence of $2+R$-bounded
gap ratios is the concatenation $\{L_n\} = \{B_1, B_2, B_3, \dots
\}$. If the sequence $\{k_m\}$ terminates at $k_t$, then
\[
B_{t+1} = \left[q_{k_t+1} - R q_{k_t}, \infty \right)\cap\mathbb{N}.
\]
This happens only if $x\in\mathbb{R}\backslash\mathbb{Q}$ is a badly
approximable number, and conversely if $x$ is badly approximable, we
can choose $R\geq 0$ large enough that this happens.

\subsection{Calculations based on~\eqref{eqn:Bm}}

It will be useful to keep certain measurements of $B_m^{x,R}$ in
mind. First, the length of the block $B_m$ is
\begin{equation}\label{BmRlength}
  \Abs{B_{m+1}} = q_{k_{m+1}} - q_{k_m+1} + R q_{k_m}+1.
\end{equation}
If the sequence $\{k_m\}$ terminates at $k_t$, then we can obviously
consider $\Abs{B_{t+1}}$ to be infinite.

Let $\{\en_m\}_{m= 1}^{\infty}$ be the sequence such that $L_{\en_m} =
q_{k_m}$ is the right end-point of the block $B_m$. Then $\en_m$ is
the sum of the lengths of the blocks $B_1, \dots, B_m$, which,
by~\eqref{BmRlength} is
\begin{equation*}
  \en_m = \sum_{n=0}^{m-1} q_{k_{n+1}} - q_{k_n+1} + Rq_{k_n}+1.
\end{equation*}
Let $\al_m$ be the index for the left end-point $L_{\al_m}$ of the
block $B_m$, so that $\al_m = \en_{m-1}+1$ for all $m\in\mathbb{N}$,
and $\al_1=1$.

The distance between consecutive blocks $B_{m+1}$ and $B_m$ is
\begin{equation*}
  B_{m+1} - B_m:= \min B_{m+1} - \max B_m =q_{k_m+1} - (R+1) q_{k_m}.
\end{equation*}
The following lemma describes the sum $\Sigma_{B_m}$ of the elements
in block $B_m$.

\begin{lemma}\label{lem:BmRsum}
  We have
  \[
  \Sigma_{B_m} \sim\frac{1}{2}\left(q_{k_m}^2 - (q_{k_{m-1}+1} -
    Rq_{k_{m-1}})^2\right).
  \]
  In particular,
  \[
  q_{k_m} q_{k_{m-1}} \ll \Sigma_{B_m} \ll q_{k_m}q_{k_m -1}
  \]
  for all $m\in\mathbb{N}$. Also,
  \[
  q_{k_m}^2 \ll \Sigma_{B_m} \ll q_{k_m}q_{k_m -1}
  \]
  whenever $q_{k_m} \geq 2q_{k_{m-1}+1}$ (\emph{i.e.} if $a_{k_m}>1$
  or if $k_m - k_{m-1} > 2$).
\end{lemma}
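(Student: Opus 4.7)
The plan is to treat $\Sigma_{B_m}$ as the sum of an arithmetic progression. Writing $a := q_{k_{m-1}+1} - R q_{k_{m-1}}$ and $b := q_{k_m}$ for the endpoints of $B_m$, elementary algebra gives
$$\Sigma_{B_m} = \frac{(b-a+1)(b+a)}{2} = \frac{b^2 - a^2 + (b+a)}{2}.$$
Since continuants grow at least geometrically (Lemma~\ref{lem:fibonacci}), the linear term $b+a$ is negligible compared to $b^2 - a^2$ as $m\to\infty$, establishing the leading asymptotic $\Sigma_{B_m} \sim \frac{1}{2}(q_{k_m}^2 - (q_{k_{m-1}+1} - Rq_{k_{m-1}})^2)$. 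All subsequent bounds follow from estimating the two factors of $b^2 - a^2 = (b-a)(b+a)$.

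The key observation for the upper bound $\Sigma_{B_m} \ll q_{k_m}q_{k_m-1}$ is that, by the definition of the sequence $\{k_m\}$, every index $j$ strictly between $k_{m-1}$ and $k_m$ satisfies $a_{j+1} \leq R$. Thus whenever $\Delta k_{m-1} \geq 2$, taking $j = k_m-1$ yields $a_{k_m} \leq R$, so $q_{k_m} = a_{k_m}q_{k_m-1} + q_{k_m-2} \leq (R+1)q_{k_m-1}$. A two-case split on $\Delta k_{m-1}$ then shows $b-a \ll q_{k_m-1}$: in the case $\Delta k_{m-1} = 1$ we have $b-a = Rq_{k_m-1}$ exactly (since $q_{k_{m-1}} = q_{k_m-1}$ and $q_{k_{m-1}+1} = q_{k_m}$), and in the case $\Delta k_{m-1} \geq 2$ the inequality on $q_{k_m}$ together with $q_{k_{m-1}} \leq q_{k_m-1}$ gives $b-a \leq (2R+1)q_{k_m-1}$. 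Combined with the trivial $b+a \leq 2q_{k_m}$, this yields the upper bound.

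For the general lower bound $q_{k_m}q_{k_{m-1}} \ll \Sigma_{B_m}$, I would use $b-a = q_{k_m} - q_{k_{m-1}+1} + Rq_{k_{m-1}} \geq Rq_{k_{m-1}}$ (since $q_{k_m} \geq q_{k_{m-1}+1}$) together with $b+a \geq q_{k_m}$, giving $\Sigma_{B_m} \gtrsim \tfrac{R}{2}\,q_{k_m}q_{k_{m-1}}$. For the improved lower bound $q_{k_m}^2 \ll \Sigma_{B_m}$ under the hypothesis $q_{k_m} \geq 2q_{k_{m-1}+1}$, one notes that $a \leq q_{k_{m-1}+1} \leq \tfrac{1}{2}q_{k_m}$ implies $b^2 - a^2 \geq \tfrac{3}{4}q_{k_m}^2$. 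What remains is the parenthetical equivalence, which comes from tracing through $q_{k_m} = a_{k_m}q_{k_m-1} + q_{k_m-2}$: if $\Delta k_{m-1} = 1$ then $q_{k_m} = q_{k_{m-1}+1}$ and doubling fails; if $\Delta k_{m-1} = 2$ then doubling amounts to $(a_{k_m}-2)q_{k_{m-1}+1} + q_{k_{m-1}} \geq 0$, which fails only when $a_{k_m} = 1$; and $\Delta k_{m-1} \geq 3$ always gives doubling via $q_{k_m} \geq q_{k_{m-1}+2} + q_{k_{m-1}+1} \geq 2q_{k_{m-1}+1}$. This bookkeeping for the equivalence is the only mildly fussy step; the rest is algebra on the factored form of $\Sigma_{B_m}$.
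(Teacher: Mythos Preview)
Your argument is correct and, in fact, noticeably cleaner than the paper's. Both proofs begin with the arithmetic-progression formula and the same asymptotic $\Sigma_{B_m}\sim\frac12(b^2-a^2)$; the difference lies in how the bounds are extracted. The paper works with the expanded expression $q_{k_m}^2-(q_{k_{m-1}+1}-Rq_{k_{m-1}})^2$ and, for each case, divides through by the desired comparison quantity and estimates the resulting terms one at a time; the lower bound alone takes three separate case computations (the cases $\Delta k_{m-1}=1$, $\Delta k_{m-1}=2$ with $a_{k_m}=1$, and the rest). You instead keep the factored form $(b-a)(b+a)$ and bound the two factors directly: $b-a\ll q_{k_m-1}$ and $b+a\leq 2q_{k_m}$ for the upper bound, $b-a\geq Rq_{k_{m-1}}$ and $b+a\geq q_{k_m}$ for the lower bound, and $a\leq\tfrac12 b\Rightarrow b^2-a^2\geq\tfrac34 b^2$ for the improved lower bound. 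This collapses the paper's case analysis into a few one-line inequalities and makes the dependence on $R$ transparent. The only place where your write-up could be tightened is the justification of the asymptotic: rather than invoking Lemma~\ref{lem:fibonacci}, it suffices to note $b-a\geq Rq_{k_{m-1}}\to\infty$, which you in any case use later.
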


\begin{proof}[\textbf{Proof}]
  Block sums are given by the formula
  \begin{multline*}
    \Sigma_{B_m} = \frac{1}{2}\left(q_{k_m}+ q_{k_{m-1}+1} - Rq_{k_{m-1}}\right)\left(q_{k_m}- q_{k_{m-1}+1} + Rq_{k_{m-1}} + 1\right) \\
    \sim \frac{1}{2}\left(q_{k_m}^2 - (q_{k_{m-1}+1} -
      Rq_{k_{m-1}})^2\right).
  \end{multline*}
  If $k_m \neq k_{m-1}+1$,
  \begin{align*}
    \frac{\Sigma_{B_m}}{q_{k_m}q_{k_m-1}} &\sim \frac{q_{k_m}^2 - (q_{k_{m-1}+1} - Rq_{k_{m-1}})^2}{2q_{k_m}q_{k_m-1}} \\
    &= \frac{1}{2}\left(\frac{q_{k_m}}{q_{k_m-1}}  -  \frac{q_{k_{m-1}+1}^2}{q_{k_m}q_{k_m-1}} - R^2\frac{q_{k_{m-1}}^2}{q_{k_m}q_{k_m-1}} + 2R \frac{q_{k_{m-1}+1}q_{k_{m-1}}}{q_{k_m}q_{k_m-1}}\right) \\
    &\leq \frac{1}{2}\left((a_{k_m}+1) + 2R\right) \ll 1
  \end{align*}
  because $a_{k_m}\leq R$ in this case. On the other hand, if $ k_m =
  k_{m-1} + 1$,
  \begin{align*}
    \frac{\Sigma_{B_m}}{q_{k_m}q_{k_m-1}} &\sim \frac{q_{k_m}^2 - (q_{k_m} - Rq_{k_{m-1}})^2}{2q_{k_m}q_{k_m-1}} \\
    &= \frac{2R q_{k_m}q_{k_{m-1}} -
      R^2q_{k_{m-1}}^2}{2q_{k_m}q_{k_m-1}} \ll 1,
  \end{align*}
  which establishes the upper bound.

  For the lower bound, first suppose that $k_{m-1}+1 = k_m$. In this
  case we have
  \[
  q_{k_m}q_{k_m-1} = \left(a_{k_m}q_{k_m-1} +
    q_{k_m-2}\right)q_{k_m-1} \geq \left(R+1\right)q_{k_m-1}^2,
  \]
  so that
  \begin{align*}
    \frac{q_{k_m}^2 - \left(q_{k_{m-1}+1} - Rq_{k_{m-1}}\right)^2}{q_{k_m}q_{k_m-1}} &= \frac{2Rq_{k_m}q_{k_m-1} - R^2 q_{k_m-1}^2}{q_{k_m}q_{k_m-1}}\\
    &= 2R - \frac{R^2 q_{k_m-1}^2}{q_{k_m}q_{k_m-1}} = 2R -
    \frac{R^2}{R+1} > R,
  \end{align*}
  proving $\Sigma_{B_m}\gg q_{k_m}q_{k_m-1} = q_{k_m}q_{k_{m-1}}$ in
  this case.

  If $k_{m-1}+2 = k_m$ and $a_{k_m}=1$ then
  \begin{multline*}
    \Sigma_{B_m} = \frac{1}{2}\left(q_{k_m}+ q_{k_{m-1}+1} - Rq_{k_{m-1}}\right)\left(q_{k_m}- q_{k_{m-1}+1} + Rq_{k_{m-1}} + 1\right) \\
    = \frac{1}{2}\left(2 q_{k_{m-1}+1} - (R-1)q_{k_{m-1}}\right)\left((R+1)q_{k_{m-1}} + 1\right) \\
    = (R+1)q_{k_{m-1}+1}q_{k_{m-1}} -
    \frac{1}{2}(R+1)(R-1)q_{k_{m-1}}^2 + q_{k_{m-1}+1} -
    \frac{1}{2}(R-1)q_{k_{m-1}}.
  \end{multline*}
  Dividing by $q_{k_{m-1}+1}q_{k_{m-1}}$ gives
  \begin{multline*}
    \frac{\Sigma_{B_m}}{q_{k_{m-1}+1}q_{k_{m-1}}}= (R+1) - \frac{(R+1)(R-1)q_{k_{m-1}}^2}{2 q_{k_{m-1}+1}q_{k_{m-1}}} + \frac{q_{k_{m-1}+1}}{q_{k_{m-1}+1}q_{k_{m-1}}} - \frac{(R-1)q_{k_{m-1}}}{2q_{k_{m-1}+1}q_{k_{m-1}}} \\
    \sim (R+1) - \frac{(R+1)(R-1)q_{k_{m-1}}}{2 q_{k_{m-1}+1}}\geq
    (R+1) - \frac{(R-1)}{2} \gg 1.
  \end{multline*}
  which proves $\gg q_{k_m-1}q_{k_m-2}$. But in this case we have
  $q_{k_m} = q_{k_m-1}+q_{k_m-2} \leq 2q_{k_m-1}$, so we have proved
  $\Sigma_{B_m} \gg q_{k_m}q_{k_m-2} = q_{k_m}q_{k_{m-1}}$ in this
  case.

  In the remaining cases we have $k_{m-1}+1 \neq k_m$ and there is
  some integer $A\geq 2$ such that
  \[
  q_{k_m} \geq A q_{k_{m-1}+1} + q_{k_{m-1}}.
  \]
  We write
  \[
  q_{k_m}^2 - \left(q_{k_{m-1}+1} - Rq_{k_{m-1}}\right)^2 =
  \left(\frac{A^2 - 1}{A^2}\right) q_{k_m}^2 + \frac{1}{A^2} q_{k_m}^2
  - \left(q_{k_{m-1}+1} - Rq_{k_{m-1}}\right)^2
  \]
  and proceed to bound
  \begin{multline*}
    \left(\frac{A^2 - 1}{A^2}\right) q_{k_m}^2 + \frac{1}{A^2} q_{k_m}^2 - \left(q_{k_{m-1}+1} - Rq_{k_{m-1}}\right)^2 \\
    \geq \left(\frac{A^2 - 1}{A^2}\right) q_{k_m}^2 + \left(q_{k_{m-1}+1} + \frac{1}{A} q_{k_{m-1}}\right)^2 \\
    - \left(q_{k_{m-1}+1} +\frac{1}{A^2} q_{k_{m-1}} - \left(R + \frac{1}{A}\right)q_{k_{m-1}}\right)^2 \\
    = \left(\frac{A^2 - 1}{A^2}\right) q_{k_m}^2 + 2\left(R+\frac{1}{A}\right)\left(q_{k_{m-1}+1} + \frac{1}{A} q_{k_{m-1}}\right)q_{k_{m-1}} - \left(R+\frac{1}{A}\right)^2q_{k_{m-1}}^2 \\
    \geq \left(\frac{A^2 - 1}{A^2}\right) q_{k_m}^2 +
    \left(R+\frac{1}{A}\right)^2q_{k_{m-1}}^2 \gg q_{k_m}^2
  \end{multline*}
  because $A\geq 2$.
\end{proof}

\subsection{Positive density property}
 
The following definition is relevant to our ``divergence'' results.
\theoremstyle{definition} \newtheorem{definition}[theorem]{Definition}
\begin{definition}[Positive density property]\label{def:pdp}
  We say $x\in\mathbb{R}\backslash\mathbb{Q}$ has the \emph{positive
    density property} if there exists $R\geq 1$ such that
  \[
  \limsup_{m\to\infty} \frac{L_{\al_m}^R}{\Sigma_{\al_m}^R} <1.
  \]
  An intuitive interpretation is that a number with positive density
  property has blocks $B_m:=B_m^{x,R}$ that are not too far away from
  each other.
\end{definition}

\theoremstyle{plain} \newtheorem{proposition}[theorem]{Proposition}
\begin{proposition}\label{prop:pdp}
  The number $x\in\mathbb{R}\backslash\mathbb{Q}$ has the positive
  density property if and only if
  \[
  q_{k_m+1} - Rq_{k_m} \ll \sum_{\ell=1}^m q_{k_\ell}^2 -
  (q_{k_{\ell-1}+1} - Rq_{k_{\ell-1}})^2
  \]
  as $m\to\infty$. In particular,
  \begin{itemize}
  \item any $x\notin\mathcal W_1 (\varphi)$, and
  \item any $x\notin\mathcal W_1 (2)$ for which there exists $R\geq 1$
    such that $q_{k_m} \geq 2q_{k_{m-1}+1}$ for all but finitely many
    $m\in\mathbb{N}$
  \end{itemize}
  has the positive density property.
\end{proposition}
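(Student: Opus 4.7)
The plan is to first establish the stated equivalence with the positive density property and then verify that each of the two listed hypotheses implies it. The equivalence amounts to bookkeeping: unpacking definitions, the left endpoint of block $B_{m+1}$ is $L_{\alpha_{m+1}} = q_{k_m+1} - Rq_{k_m}$, while the cumulative sum satisfies $\Sigma_{\alpha_{m+1}} = L_{\alpha_{m+1}} + \sum_{\ell=1}^m \Sigma_{B_\ell}$. The condition $\limsup L_{\alpha_m}/\Sigma_{\alpha_m} < 1$ then rearranges to $L_{\alpha_{m+1}} \ll \sum_{\ell=1}^m \Sigma_{B_\ell}$, and substituting the asymptotic from Lemma~\ref{lem:BmRsum} converts the right-hand side to the sum displayed in the proposition.

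For the first sufficient condition, take $R = 1$. The hypothesis $x \notin \mathcal{W}_1(\varphi)$ gives $q_{n+1} \leq q_n^\varphi$ eventually, so $q_{k_m+1} - Rq_{k_m} \leq q_{k_m}^\varphi$. It suffices to show that the single term $\ell = m$ of the right-hand side dominates, i.e., $q_{k_m}^\varphi \ll \Sigma_{B_m}$. I split on $\Delta k_{m-1}$. When $\Delta k_{m-1} \geq 3$, Lemma~\ref{lem:BmRsum} gives $\Sigma_{B_m} \gg q_{k_m}^2$, and $q_{k_m}^\varphi \ll q_{k_m}^2$ since $\varphi < 2$. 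The delicate cases are $\Delta k_{m-1} \in \{1, 2\}$, where Lemma~\ref{lem:BmRsum} provides only $\Sigma_{B_m} \gg q_{k_m}q_{k_{m-1}}$, so the required inequality reduces to $q_{k_m}^{\varphi - 1} \ll q_{k_{m-1}}$. Combining $q_{k_m} \leq C q_{k_{m-1}}^\varphi$ (which follows from $q_{k_{m-1}+1} \leq q_{k_{m-1}}^\varphi$ and a bounded Fibonacci-type iteration between $k_{m-1}+1$ and $k_m$) with the identity $\varphi(\varphi - 1) = 1$ closes the inequality precisely. This tight use of $\varphi^2 = \varphi + 1$ is the main obstacle in the whole proof, and it explains why $\varphi$ rather than any larger exponent appears in the statement.

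For the second sufficient condition, choose $R$ as in the hypothesis. The doubling $q_{k_m} \geq 2q_{k_{m-1}+1}$ forces $q_{k_m}^2 - (q_{k_{m-1}+1} - Rq_{k_{m-1}})^2 \geq (3/4)q_{k_m}^2$, while $x \notin \mathcal{W}_1(2)$ gives $q_{k_m+1} - Rq_{k_m} \leq q_{k_m}^2$, so the $\ell = m$ term alone dominates the left-hand side, and the inequality of the proposition follows.
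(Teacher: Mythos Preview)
Your proposal is correct and follows essentially the same route as the paper: both reduce the positive density property to $L_{\alpha_{m+1}} \ll \Sigma_{\omega_m}$, then show the single-term bound $q_{k_m+1}-Rq_{k_m}\ll \Sigma_{B_m}$ suffices, and close the $\varphi$-case via the identity $\varphi(\varphi-1)=1$. The only cosmetic differences are that you fix $R=1$ for the first bullet while the paper leaves $R$ generic, and your case split ($\Delta k_{m-1}\geq 3$ versus $\Delta k_{m-1}\in\{1,2\}$) is organized slightly differently from the paper's ($\Delta k_{m-1}=1$; $\Delta k_{m-1}=2$ with $a_{k_m}=1$; and $q_{k_m}\geq 2q_{k_{m-1}+1}$).
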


\begin{proof}[\textbf{Proof}]
  Positive density property is the requirement that there is some
  $\delta < 1$ such that
  \[
  \frac{L_{\al_{m+1}}^R}{\Sigma_{\al_{m+1}}^R} = \frac{q_{k_m^R+1} -
    Rq_{k_m^R}}{q_{k_m^R+1} - Rq_{k_m^R} + \Sigma_{\en_m}^R} \leq
  \delta
  \]
  for all sufficiently large $m$. This is equivalent to $q_{k_m+1} -
  Rq_{k_m} \ll \Sigma_{\en_m}$, which by Lemma~\ref{lem:BmRsum} is
  equivalent to
  \begin{equation*}
    q_{k_m+1} - Rq_{k_m} \ll \sum_{\ell=1}^m q_{k_\ell}^2 - (q_{k_{\ell-1}+1} - Rq_{k_{\ell-1}})^2.
  \end{equation*}
  In particular,
  \begin{equation}\label{eqn:suff}
    q_{k_m+1} - Rq_{k_m} \ll \Sigma_{B_m}
  \end{equation}
  is sufficient.

  If $k_{m-1}+1 = k_m$, the sufficient~\eqref{eqn:suff} becomes
  \[
  q_{k_m+1} - Rq_{k_m} \ll q_{k_m}q_{k_m-1},
  \]
  for which it is sufficient that
  \[
  \frac{q_{k_m+1}}{q_{k_m}q_{k_m-1}}\ll 1.
  \]
  We will have this comparison whenever $x\notin \mathcal
  W_1(\varphi)$.

  On the other hand, if $k_{m-1}+1 \neq k_m$, then
  Lemma~\ref{lem:BmRsum} allows us to consider two cases: either
  $\Delta k_{m-1}:=k_m - k_{m-1} =2$ and $a_{k_m}=1$, or $q_{k_m}\geq
  2q_{k_{m-1}+1}$. In the first case,~\eqref{eqn:suff} becomes
  \[
  q_{k_m+1} - Rq_{k_m} \ll q_{k_m}q_{k_{m-1}} = q_{k_m}q_{k_m -2},
  \]
  and for this it is sufficient that
  \begin{equation*}\label{eqn:km2}
    \frac{q_{k_m+1}}{q_{k_m}q_{k_m -2}} \ll 1.
  \end{equation*}
  If $x\notin\mathcal W_1(\varphi)$, then
  \[
  \frac{q_{k_m+1}}{q_{k_m}q_{k_m -2}} \ll
  \frac{q_{k_m}^\varphi}{q_{k_m}q_{k_m - 1}^{1/\varphi}}
  \]
  and since $q_{k_m} = q_{k_m-1} + q_{k_m-2} \leq 2q_{k_m-1}$ in this
  case,
  \[
  \frac{q_{k_m}^\varphi}{q_{k_m}q_{k_m - 1}^{1/\varphi}} \ll
  \frac{q_{k_m}^\varphi}{q_{k_m}^{1 + 1/\varphi}} \ll 1
  \]
  as wanted.

  In the second case we will have $q_{k_m} \geq 2 q_{k_{m-1}+1}$, and
  the last part of Lemma~\ref{lem:BmRsum} implies that
  \[
  q_{k_m+1} - Rq_{k_m} \ll q_{k_m}^2
  \]
  is sufficient for~\eqref{eqn:suff}. This is satisfied whenever
  $x\notin\mathcal W_1(2)$. In particular, if $x\notin\mathcal W_1
  (\varphi)$, then we satisfy~\eqref{eqn:suff}, which proves the first
  point in the proposition. This last paragraph has also proved the
  second point in the proposition.
\end{proof}

\subsection{Bounded ratio property}

The following property is slightly stronger than positive density
property. It is relevant to our ``convergence'' results.

\begin{definition}[Bounded ratio property]\label{def:brp}
  We say that $x\in\mathbb{R}\backslash\mathbb{Q}$ has the
  \emph{bounded ratio property} if there exists a bound $R\geq1$ such
  that
  \[
  \sum_{m\in\mathbb{N}}\frac{B_{m+1}^R - B_m^R}{\Sigma_{\en_m}^R} <
  \infty.
  \]
  This is equivalent to
  \[
  \sum_{m\in\mathbb{N}}\frac{L_{\al_{m+1}}}{\Sigma_{\en_m}} < \infty.
  \]
  Again, having the bounded ratio property means that the jumps
  between the blocks $B_m$ are not too severe.
\end{definition}

The following proposition gives numbers with the bounded ratio
property, based on Diophantine type.

\begin{proposition}\label{prop:brp}
  Numbers with bounded ratio property:
  \begin{itemize}
  \item Every number of Diophantine type less than
    $\varphi=\frac{1+\sqrt{5}}{2}$ has the bounded ratio property.
  \item Every number of Diophantine type less than $2$ for which there
    is some $R\geq 1$ such that $q_{k_m} \geq 2q_{k_{m-1}+1}$ for all
    but finitely many $m\in\mathbb{N}$ has the bounded ratio property.
  \end{itemize}
\end{proposition}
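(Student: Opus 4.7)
The plan is to use the definition of bounded ratio property, which requires convergence of $\sum_m L_{\al_{m+1}}^R / \Sigma_{\en_m}^R$. I would bound the numerator by $q_{k_m+1}$ and the denominator from below by $\Sigma_{B_m}$, and then use Lemma~\ref{lem:BmRsum} together with the Diophantine hypothesis on $x$ to produce geometrically decaying terms.

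For the second bullet, I choose $\sigma$ strictly between the Diophantine type of $x$ and $2$. The characterization of Diophantine type via continuants gives $q_{n+1} \leq^* q_n^\sigma$, hence $q_{k_m+1} \leq^* q_{k_m}^\sigma$. The standing assumption $q_{k_m} \geq 2q_{k_{m-1}+1}$ activates the improved lower bound $\Sigma_{B_m} \gg q_{k_m}^2$ from Lemma~\ref{lem:BmRsum}, yielding
\[
\frac{q_{k_m+1}}{\Sigma_{B_m}} \ll q_{k_m}^{\sigma - 2}.
\]
Since $\sigma - 2 < 0$ and $q_{k_m}$ grows at least geometrically (Lemma~\ref{lem:fibonacci}), the series converges.

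For the first bullet, I choose $\sigma$ strictly between the Diophantine type and $\varphi$, and split the indices $m$ by whether $q_{k_m} \geq 2q_{k_{m-1}+1}$. When it holds, the argument for the second bullet applies verbatim (as $\sigma < \varphi < 2$). For the remaining indices, the parenthetical in Lemma~\ref{lem:BmRsum} leaves only $\Delta k_{m-1} = 1$, or $\Delta k_{m-1} = 2$ with $a_{k_m} = 1$; in either configuration the continuant recursion directly gives $q_{k_m} \leq 2 q_{k_{m-1}+1} \leq^* 2 q_{k_{m-1}}^\sigma$. Combined with the general bound $\Sigma_{B_m} \gg q_{k_m}q_{k_{m-1}}$, this yields
\[
\frac{q_{k_m+1}}{\Sigma_{B_m}} \ll \frac{q_{k_m}^{\sigma-1}}{q_{k_{m-1}}} \ll q_{k_{m-1}}^{\sigma^2 - \sigma - 1}.
\]
Since $\sigma < \varphi$ is equivalent to $\sigma^2 - \sigma - 1 < 0$, and $q_{k_{m-1}}$ grows geometrically in $m$, this contribution is also summable.

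The main obstacle is the case analysis in the first bullet: ensuring that the ``tightly packed'' sub-case, where only the weaker lower bound $\Sigma_{B_m} \gg q_{k_m}q_{k_{m-1}}$ is available, does not spoil summability. The golden ratio enters precisely as the root of $\sigma^2 - \sigma - 1 = 0$, which is why $\varphi$ is the correct threshold in the first bullet, while $2$ suffices in the second bullet because the extra hypothesis $q_{k_m} \geq 2q_{k_{m-1}+1}$ eliminates the tightly packed sub-case altogether.
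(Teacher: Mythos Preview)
Your proof is correct and follows essentially the same route as the paper: bound $L_{\al_{m+1}}/\Sigma_{\en_m}$ by $q_{k_m+1}/\Sigma_{B_m}$, invoke Lemma~\ref{lem:BmRsum}, and use $q_{n+1}\leq^* q_n^\sigma$ to produce a summable power of continuants. The only cosmetic difference is the case split in the first bullet---the paper separates off the exceptional configuration $\Delta k_{m-1}=2,\ a_{k_m}=1$ and works with $\Sigma_{B_m}\gg q_{k_m}q_{k_m-1}$ elsewhere (arriving at $q_{k_m}^{\sigma-1-1/\sigma}$), whereas you separate off the indices with $q_{k_m}\geq 2q_{k_{m-1}+1}$ and reuse the second bullet there; your exponent $\sigma^2-\sigma-1$ is of course the same condition.
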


\begin{remark*}
  Notice that these are not the same numbers listed in
  Proposition~\ref{prop:pdp}. There, we require (for example) that
  $x\notin\mathcal W_1(\varphi)$, whereas here we are requiring that
  $x\notin\mathcal W_1(\sigma)$ for some $\sigma <\varphi$. This is a
  slightly stronger requirement.
\end{remark*}

\begin{proof}[\textbf{Proof}]
  For the first assertion, let $\sigma<\varphi$ be such that
  $x\notin\mathcal W_1 (\sigma)$. By Lemma~\ref{lem:BmRsum} we have
  \[
  \frac{L_{\al_{m+1}}}{\Sigma_{\en_m}}
  \ll \frac{q_{k_m+1}}{q_{k_m}q_{k_m-1}}
  \]
  as long as we are not in the situation where $\Delta k_{m-1} = 2$
  and $a_{k_m}=1$. This in turn is bounded
  \[
  \frac{L_{\al_{m+1}}}{\Sigma_{\en_m}}
  \ll \frac{q_{k_m+1}}{q_{k_m}q_{k_m-1}}
  \ll q_{k_m}^{\sigma-1-1/\sigma}.
  \]
  On the other hand, if we \emph{are} in the situation of $\Delta
  k_{m-1} = 2$ and $a_{k_m}=1$, then $q_{k_m}\asymp q_{k_m-1}$, so
  \[
  \frac{L_{\al_{m+1}}}{\Sigma_{\en_m}}
  \ll \frac{q_{k_m+1}}{q_{k_m}q_{k_m-2}}
  \ll \frac{q_{k_m}^\sigma}{q_{k_m-1}^{1 + 1/\sigma}}
  \ll q_{k_m}^{\sigma - 1 - 1/\sigma},
  \]
  as above. And the sum
  \[
  \sum_{m\in\mathbb{N}} q_{k_m}^{\sigma - 1 - 1/\sigma}
  \]
  converges because $\sigma - 1 - 1/\sigma < 0$. Therefore, $x$ has
  the bounded ratio property.

  For the second assertion, let $\sigma<2$ and let $x\notin \mathcal
  W_1(\sigma)$ be such that $q_{k_m} \geq 2q_{k_{m-1}+1}$ for all but
  finitely many $m\in\mathbb{N}$, for some $R\geq 1$. By
  Lemma~\ref{lem:BmRsum},
  \[
  \frac{L_{\al_{m+1}}}{\Sigma_{\en_m}}
  \ll \frac{q_{k_m+1}}{q_{k_m}^2} \ll q_{k_m}^{\sigma -2},
  \]
  and the sum
  \[
  \sum_{m\in\mathbb{N}} q_{k_m}^{\sigma - 2}
  \]
  diverges because $\sigma-2 <0$. Therefore $x$ has the bounded ratio
  property and the proposition is proved.
\end{proof}

\section{Some counting lemmas}\label{sec:lemmas}

This section is about the counting Lemmas~\ref{lem:schmidt}
and~\ref{lem:antischmidt}. They give bounds on
\[
\Abs{\mathcal Q(x,\psi)\cap[M, N]}
\]
when $M$ and $N$ come from our bounded ratio sequences $\{L_n\}$.

\begin{lemma}\label{lem:schmidt}
  Let $\{L_n\}$ be a sequence of $R$-bounded gap ratios for
  $x\in\mathbb{R}\backslash\mathbb{Q}$, and $\{\Sigma_n\}$ be the
  sequence defined by $\Sigma_n = L_1+L_2+\dots+L_n$. If $\psi$ is an
  approximating function such that $L_n\, \psi (\Sigma_n)\geq R$ for
  $n$ sufficiently large, then
  \[
  \sum_{\substack{q=\Sigma_n+1\\ q \in
      \mathcal{Q}(x,\psi)}}^{\Sigma_{n+1}} 1 \gg L_{n+1}\, \psi
  (\Sigma_{n+1})\quad\textrm{as $n\to\infty$,}
  \]
  where
  \[
  \mathcal{Q}(x, \psi) = \left\{q\in\mathbb{N} :
    \norm{qx}<\psi(q)\right\}
  \]
  is the set of denominators that $\psi$-approximate $x$ in
  $\mathbb{R}$.
\end{lemma}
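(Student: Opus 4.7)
The plan is to exploit the bounded gap ratio to control how often the partial orbit $\{qx+\mathbb{Z}\}_{q=\Sigma_n+1}^{\Sigma_{n+1}}$ revisits a small neighbourhood of $0\in\mathbb{R}/\mathbb{Z}$. A $q$ in the window $(\Sigma_n,\Sigma_{n+1}]$ lands in $\mathcal{Q}(x,\psi)$ as soon as $qx+\mathbb{Z}$ lies in the interval $I_{n+1}=(-\psi(\Sigma_{n+1}),\psi(\Sigma_{n+1}))$: monotonicity of $\psi$ then gives $\norm{qx}<\psi(\Sigma_{n+1})\leq \psi(q)$. So it suffices to count the points of the partial orbit inside $I_{n+1}$.

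First I would observe that $\{qx+\mathbb{Z}\}_{q=\Sigma_n+1}^{\Sigma_{n+1}}$ is simply the rotation of $\{qx+\mathbb{Z}\}_{q=1}^{L_{n+1}}$ by $\Sigma_n x$, and therefore inherits its $R$-bounded gap ratio. Applying Lemma~\ref{lem:density} (with $q_0=\Sigma_n$ and $L=L_{n+1}$) then yields the bound $\ell_{\max}<R/L_{n+1}$ for the maximum gap length of this partial orbit.

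Second, I would invoke the standard fact that if a finite subset of the circle has all gaps of length at most $g$, then any arc of length $\ell$ contains more than $\ell/g-1$ points of the subset. (A quick justification: if $p_j,\ldots,p_k$ are the points inside the arc, then the point just before $p_j$ and just after $p_k$ sit at distance less than $g$ from the endpoints, so $\ell<(k-j+2)g$.) Applied to $I_{n+1}$ (of length $2\psi(\Sigma_{n+1})$) and $g=R/L_{n+1}$, the number of orbit points in $I_{n+1}$ exceeds
\[
\frac{2\,\psi(\Sigma_{n+1})\,L_{n+1}}{R}-1.
\]

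Finally I would use the hypothesis $L_n\psi(\Sigma_n)\geq R$ (eventually), which applies in particular at index $n+1$, so $L_{n+1}\psi(\Sigma_{n+1})/R\geq 1$ for all sufficiently large $n$. Subtracting this quantity from $2L_{n+1}\psi(\Sigma_{n+1})/R$ gives the lower bound $L_{n+1}\psi(\Sigma_{n+1})/R$, which is $\gg L_{n+1}\psi(\Sigma_{n+1})$ with implicit constant $1/R$. The only even mildly delicate step is the elementary counting estimate of the second paragraph, which the hypothesis $L_{n+1}\psi(\Sigma_{n+1})\geq R$ is precisely designed to absorb; everything else is bookkeeping from Lemma~\ref{lem:density}.
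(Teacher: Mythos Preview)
Your proof is correct and essentially identical to the paper's: both reduce to counting orbit points of $\{qx\}_{q=\Sigma_n+1}^{\Sigma_{n+1}}$ in a $\psi(\Sigma_{n+1})$-neighbourhood of $0$, invoke Lemma~\ref{lem:density} to bound $\ell_{\max}<R/L_{n+1}$, and use the hypothesis $L_{n+1}\psi(\Sigma_{n+1})\geq R$ to absorb the $-1$ (equivalently, floor) correction in the elementary arc-counting estimate. The only cosmetic difference is that you count in the symmetric arc $(-\psi(\Sigma_{n+1}),\psi(\Sigma_{n+1}))$ while the paper uses the one-sided $[0,\psi(\Sigma_{n+1}))$, gaining a harmless factor of~$2$.
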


\begin{proof}[\textbf{Proof}]
  We bound below by
  \[
  \sum_{\substack{q=\Sigma_n+1\\ q \in
      \mathcal{Q}(x,\psi)}}^{\Sigma_{n+1}} 1 \geq
  \Abs{\{qx\}_{q=\Sigma_n +1}^{\Sigma_{n+1}}\cap [0,\psi
    (\Sigma_{n+1}))} \geq
  \floor[\bigg]{\frac{\psi(\Sigma_{n+1})}{\ell_{\max}}}
  \]
  which by Lemma~\ref{lem:density} we can bound by
  \[
  \Abs{\{qx\}_{q=\Sigma_n +1}^{\Sigma_{n+1}}\cap [0,\psi
    (\Sigma_{n+1}))} \geq \floor[\bigg]{\psi (\Sigma_{n+1})\cdot
    \frac{L_{n+1}}{R}} \gg L_{n+1}\, \psi (\Sigma_{n+1})
  \]
  as $n\to\infty$, because we have assumed that $L_n\,\psi
  (\Sigma_n)\geq R$ eventually.
\end{proof}

The next lemma will allow us to assume without loss of generality that
$\psi$ satisfies the conditions of Lemma~\ref{lem:schmidt}.

\begin{lemma}\label{lem:wlog}
  Let $\{L_n\}$ be a sequence of $R$-bounded gap ratios for
  $x\in\mathbb{R}\backslash\mathbb{Q}$, and $\{\Sigma_n\}$ be the
  sequence defined by $\Sigma_n = L_1+L_2+\dots+L_n$. Let $\psi$ be an
  approximating function. There is an approximating function
  $\tilde\psi \geq \psi$ such that $L_n\,\tilde\psi (\Sigma_n)\geq R$
  and such that
  \[
  \sum_{q \in \mathcal{Q}(x, \tilde\psi)} \tilde\psi(q)^{d-1} = \infty
  \implies \sum_{q \in \mathcal{Q}(x, \psi)} \psi(q)^{d-1} = \infty
  \]
  for any $d\geq 3$.
\end{lemma}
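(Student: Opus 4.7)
I would take $\tilde\psi(q) = \max\bigl(\psi(q),\phi(q)\bigr)$ where
\[
\phi(q) \;=\; R\sup_{\{n\,:\,\Sigma_n\geq q\}} \frac{1}{L_n}.
\]
Since $\psi$ is non-increasing and the set over which the supremum is taken only grows as $q$ decreases, $\tilde\psi$ is non-increasing. It visibly dominates $\psi$, and evaluating at $q=\Sigma_n$ gives $\tilde\psi(\Sigma_n)\geq R/L_n$, so the required relation $L_n\tilde\psi(\Sigma_n)\geq R$ is built in. The three structural requirements on $\tilde\psi$ are therefore free; the lemma's content lies in the divergence implication.

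I would prove this contrapositively. Assume $\sum_{q\in\mathcal Q(x,\psi)}\psi(q)^{d-1}<\infty$. Because $\tilde\psi(q)$ equals whichever of $\psi(q),\phi(q)$ is larger, a case-split gives the pointwise bound
\[
\tilde\psi(q)^{d-1}\mathbf{1}_{\norm{qx}<\tilde\psi(q)} \;\leq\; \psi(q)^{d-1}\mathbf{1}_{\norm{qx}<\psi(q)} \;+\; \phi(q)^{d-1}\mathbf{1}_{\norm{qx}<\phi(q)},
\]
so the task reduces to showing that the $\phi$-sum $\sum_q \phi(q)^{d-1}\mathbf{1}_{\norm{qx}<\phi(q)}$ is finite.

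The decisive structural fact is that $\{L_n\}$ is strictly increasing as a sequence of positive integers. Within each block $B_m$ the elements are consecutive integers by Lemma~\ref{lem:brsequence}, and the block-boundary condition $a_{k_m+1}>R-2$, together with the continuant recursion, forces $\min B_{m+1}=q_{k_m+1}-(R-2)q_{k_m} > q_{k_m}=\max B_m$. Consequently $\inf_{n\geq m}L_n = L_m$, and $\phi(q)=R/L_m$ is constant on $(\Sigma_{m-1},\Sigma_m]$. Applying Lemma~\ref{lem:density} to the shifted partial orbit $\{qx\}_{q=\Sigma_{m-1}+1}^{\Sigma_m}$ of length $L_m$, the minimum-gap bound $\ell_{\min}>1/(RL_m)$ yields the $m$-uniform count
\[
\#\{q\in(\Sigma_{m-1},\Sigma_m]\,:\,\norm{qx}<R/L_m\} \;\leq\; 2R^2+1,
\]
so block $m$ contributes at most $(2R^2+1)(R/L_m)^{d-1}$ to the $\phi$-sum.

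Summing over $m$ and using that $L_m\geq m$ (strictly increasing positive integers), the $\phi$-sum is bounded by a constant times $\sum_{m\geq 1}1/m^{d-1}$, which converges precisely because $d-1\geq 2$; this is where the hypothesis $d\geq 3$ enters. The main obstacle I anticipate is the strict-monotonicity step for $\{L_n\}$, since it is the linchpin that collapses $\inf_{n\geq m}L_n$ to $L_m$ and makes the three-gaps count uniform in $m$; once that is secured, the remaining estimates are routine consequences of Lemmas~\ref{lem:brsequence} and~\ref{lem:density}.
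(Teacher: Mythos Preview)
Your proposal is correct and follows essentially the same route as the paper: define $\tilde\psi=\max(\psi,\phi)$ with $\phi$ piecewise equal to $R/L_m$ on $(\Sigma_{m-1},\Sigma_m]$, split the sum into the $\psi$-part and the $\phi$-part, and bound the $\phi$-part by a constant times $\sum_m L_m^{-(d-1)}$ via the three-gaps density estimate. Your supremum definition of $\phi$ and your verification that $\{L_n\}$ is strictly increasing are extra care the paper omits (it simply declares $\varphi(q)=R/L_n$ on $(\Sigma_{n-1},\Sigma_n]$ without checking monotonicity), and your count $2R^2+1$ is in fact the correct application of Lemma~\ref{lem:density}, whereas the paper's displayed bound $2R$ uses the wrong inequality for $\ell_{\min}$---harmless, since only a uniform constant is needed.
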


\begin{proof}[\textbf{Proof}]
  Let $\varphi$ be the approximating function defined by $\varphi(q) =
  RL_n^{-1}$ where $q \in \left(\Sigma_{n-1}, \Sigma_n\right]$ and
  define $\tilde\psi(q) := \max\{\psi(q), \varphi(q)\}$. Let $A = \{q
  : \psi(q)\geq \varphi(q)\}$ and $B = \mathbb{N}\backslash A$. Then
  \begin{equation}\label{eqn:wlog}
    \sum_{q \in \mathcal{Q}(x, \tilde\psi)} \tilde\psi(q)^{d-1} = \sum_{q \in A\cap \mathcal{Q}(x, \psi)} \psi(q)^{d-1} + \sum_{q \in B\cap \mathcal{Q}(x, \varphi)} \varphi(q)^{d-1}.
  \end{equation}
  The second sum is bounded by
  \begin{equation}\label{eqn:wlog2}
    \sum_{q \in \mathcal{Q}(x, \varphi)} \varphi(q)^{d-1} = \sum_{n\in\mathbb{N}} \Abs{\mathcal{Q}(x,\varphi)\cap\left(\Sigma_{n-1}, \Sigma_n\right]}\left(\frac{R}{L_n}\right)^{d-1}.
  \end{equation}
  By Lemma~\ref{lem:density},
  \[
  \Abs{\mathcal{Q}(x, \varphi)\cap \left(\Sigma_{n-1},
      \Sigma_n\right]} < \frac{2R}{L_n} \div \ell_{\min} <
  \frac{2R}{L_n} \div \frac{1}{L_n} = 2R
  \]
  so we can bound~\eqref{eqn:wlog2} by
  \[
  2R^d \, \sum_{n\in\mathbb{N}} \left(\frac{1}{L_n}\right)^{d-1}
  \]
  which converges as long as $d-1>1$. Now~\eqref{eqn:wlog} shows that
  if $\sum_{q \in \mathcal{Q}(x, \tilde\psi)} \tilde\psi(q)^{d-1}$
  diverges, then so does $\sum_{q \in \mathcal{Q}(x, \psi)}
  \psi(q)^{d-1}$.
\end{proof}

\begin{remark*}
  Besides our repeated applications of Gallagher's Theorem,
  Lemma~\ref{lem:wlog} is the only other place where we need $d\geq
  3$. Notice that the sum $\sum_{n\in\mathbb{N}} L_n^{-1}$ can
  diverge, for example, if $x$ is badly approximable.
\end{remark*}

The following lemma should be compared with Lemma~\ref{lem:schmidt}.
\begin{lemma}\label{lem:antischmidt}
  Let $\{L_n\}$ be a sequence of $R$-bounded gap ratios for
  $x\in\mathbb{R}\backslash\mathbb{Q}$, and $\{\Sigma_n\}$ be the
  sequence defined by $\Sigma_n = L_1+L_2+\dots+L_n$. If $\psi$ is an
  approximating function, then
  \[
  \sum_{\substack{q=\Sigma_{n-1}\\ q \in
      \mathcal{Q}(x,\psi)}}^{\Sigma_n-1} 1 \ll L_n\, \psi
  (\Sigma_{n-1})
  \]
  as $n\to\infty$.
\end{lemma}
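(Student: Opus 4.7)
The plan is to mirror the proof of Lemma~\ref{lem:schmidt}, but in the reverse direction: instead of getting a lower bound via the \emph{maximum} gap length $\ell_{\max}$, we will get an upper bound via the \emph{minimum} gap length $\ell_{\min}$ on the block of length $L_n$ starting at $\Sigma_{n-1}+1$.

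First, I would use monotonicity of the approximating function to reduce to a single threshold: for every $q$ in the range $[\Sigma_{n-1},\Sigma_n-1]$ we have $\psi(q)\leq \psi(\Sigma_{n-1})$, so the condition $q\in\mathcal{Q}(x,\psi)$ forces $\norm{qx}<\psi(\Sigma_{n-1})$. This lets me bound
\[
\sum_{\substack{q=\Sigma_{n-1}\\ q\in\mathcal{Q}(x,\psi)}}^{\Sigma_n-1} 1 \;\leq\; \Abs[\Big]{\{qx+\mathbb{Z}\}_{q=\Sigma_{n-1}}^{\Sigma_n-1}\cap\bigl(-\psi(\Sigma_{n-1}),\,\psi(\Sigma_{n-1})\bigr)},
\]
viewed in $\mathbb{R}/\mathbb{Z}$. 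This is simply a count of orbit points in an arc around $0$ of length $2\psi(\Sigma_{n-1})$.

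Next, I would apply Lemma~\ref{lem:density} to the block of $L_n$ consecutive orbit points $\{qx+\mathbb{Z}\}_{q=\Sigma_{n-1}}^{\Sigma_n-1}$. Since $\{L_n\}$ is a sequence of $R$-bounded gap ratios, this block partitions the circle into $L_n$ arcs whose minimum length satisfies $\ell_{\min}>\frac{1}{RL_n}$. Between any two consecutive orbit points inside the arc $(-\psi(\Sigma_{n-1}),\psi(\Sigma_{n-1}))$ there must be a gap of at least $\ell_{\min}$, so the count is at most
\[
\left\lceil \frac{2\psi(\Sigma_{n-1})}{\ell_{\min}}\right\rceil + 1 \;\leq\; 2RL_n\,\psi(\Sigma_{n-1}) + 2 \;\ll\; L_n\,\psi(\Sigma_{n-1})
\]
as $n\to\infty$, which gives the claimed asymptotic bound.

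There is no real obstacle here: the only mildly delicate point is handling the regime where $L_n\,\psi(\Sigma_{n-1})$ does not tend to infinity, in which case the additive constant from the ceiling could in principle dominate. But the conclusion is an asymptotic $\ll$-bound, and in all applications (paralleling Lemma~\ref{lem:schmidt} and~\ref{lem:wlog}) we will be in a setting where $L_n\,\psi(\Sigma_{n-1})$ is at least of order $R$ eventually, so the constant is absorbed without difficulty.
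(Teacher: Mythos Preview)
Your approach is essentially identical to the paper's: both use monotonicity of $\psi$ to replace the threshold by $\psi(\Sigma_{n-1})$, then invoke Lemma~\ref{lem:density} and the bound $\ell_{\min}>1/(RL_n)$ to count orbit points in the short arc around $0$. If anything you are more careful than the paper, which simply writes $\psi(\Sigma_{n-1})\div\frac{1}{RL_n}$ without the additive $+1$ you flag; your caveat about the regime where $L_n\psi(\Sigma_{n-1})$ is small is fair, and in the sole application (Theorem~\ref{thm:gapsumconv}) that extra constant is harmless after multiplying by $\psi(\Sigma_{n-1})^{d-1}$ and summing.
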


\begin{proof}[\textbf{Proof}]
  We bound above by
  \[
  \sum_{\substack{q=\Sigma_{n-1}\\ q \in
      \mathcal{Q}(x,\psi)}}^{\Sigma_n-1} 1 \leq
  \Abs{\{qx\}_{q=\Sigma_{n -1}}^{\Sigma_n-1}\cap [0,\psi
    (\Sigma_{n-1}))}
  \]
  which by Lemma~\ref{lem:density} we can bound by
  \[
  \Abs{\{qx\}_{q=\Sigma_{n-1}}^{\Sigma_n-1}\cap [0,\psi
    (\Sigma_{n-1}))} \leq \psi (\Sigma_{n-1})\div \frac{1}{RL_n} \ll
  L_n\, \psi (\Sigma_{n-1})
  \]
  as $n\to\infty$.
\end{proof}

\begin{lemma}\label{lem:wlogconv}
  If $\psi$ is an approximating function such that
  $\sum_{q\in\mathbb{N}}\psi(q)^d$ converges, then $\psi(q)\ll
  q^{-1/d}$.
\end{lemma}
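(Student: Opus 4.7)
The plan is to exploit the monotonicity of $\psi$ directly; no deep input is needed. Since $\psi$ is an approximating function, it is non-increasing, so for every $q\in\mathbb{N}$ and every $k\in\{1,2,\dots,q\}$ we have $\psi(k)\geq\psi(q)$ and therefore $\psi(k)^d\geq \psi(q)^d$. Summing the first $q$ terms of the series gives
\[
\sum_{k=1}^{q}\psi(k)^d \;\geq\; q\,\psi(q)^d.
\]

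Since $\sum_{k=1}^\infty \psi(k)^d$ converges, the partial sums are bounded above by a finite constant $C$. Combining this with the previous display yields $q\,\psi(q)^d\leq C$ for all $q\in\mathbb{N}$, and hence $\psi(q)\leq C^{1/d}\,q^{-1/d}$, which is exactly the claim $\psi(q)\ll q^{-1/d}$.

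There is no real obstacle here; the only thing to be careful about is to invoke monotonicity in the right direction (bounding $\psi(k)^d$ from below by $\psi(q)^d$ for $k\leq q$, not the other way around). If a slightly sharper statement is ever needed, one could additionally restrict the sum to $k\in\{\lceil q/2\rceil,\dots,q\}$, so that by the Cauchy criterion the relevant partial sum tends to $0$, yielding $q\,\psi(q)^d\to 0$, i.e.\ $\psi(q)=o(q^{-1/d})$; but since the lemma only asserts $\ll$, the straightforward argument above suffices.
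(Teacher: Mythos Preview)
Your proof is correct and is in fact more direct than the paper's. The paper argues via Cauchy condensation: convergence of $\sum_q \psi(q)^d$ forces $2^k\psi(2^k)^d\to 0$, so $\psi(2^k)\ll 2^{-k/d}$, and then one interpolates to arbitrary $q$ using $2^k<q\leq 2^{k+1}$ together with monotonicity. Your argument skips the dyadic detour entirely by bounding the $q$th partial sum from below by $q\,\psi(q)^d$ in one step. The condensation route does yield the slightly sharper $\psi(q)=o(q^{-1/d})$ along the dyadic subsequence essentially for free, but since only $\ll$ is needed (and you note how to get $o$ if desired), your shorter approach is preferable here.
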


\begin{proof}[\textbf{Proof}]
  Since $\psi$ is non-increasing, convergence of
  $\sum_{q\in\mathbb{N}}\psi(q)^d$ is equivalent to convergence of
  $\sum_{k\in\mathbb{N}} 2^k\, \psi(2^k)^d$, therefore we know that
  the terms $2^k\,\psi(2^k)^d$ approach $0$, meaning that for any
  $c>0$, we eventually have $\psi(2^k)^d < c\cdot 2^{-k}$. So we
  certainly satisfy $\psi(q)\ll q^{-1/d}$ on the sequence
  $\{2^k\}_{k\in\mathbb{N}}$ with some implied constant
  $C>0$. Everywhere else, we observe that every $q$ is between some
  $2^k$ and the next one, so
  \[
  2^k < q \leq 2^{k+1} \quad \textrm{and} \quad \psi(2^{k+1}) \leq
  \psi(q) <\psi(2^k).
  \]
  Combining these and our previous observations we find
  \[
  \psi(q)^d < \psi(2^k)^d\leq C \cdot2^{-k} \leq C \cdot\frac{2}{q},
  \]
  and we have shown $\psi(q)\ll q^{-1/d}$ with implied constant
  $(2C)^{1/d}$.
\end{proof}



\section{Proofs of divergence results}\label{sec:proofsdiv}

In this section, we work with approximating functions $\psi$ with the
property that $\sum_{q\in\mathbb{N}}\psi(q)^d$ diverges. Our goal is
to determine when we can guarantee the divergence of
\begin{equation}\label{eqn:gallaghersum}
  \sum_{q\in\mathcal{Q}(x,\psi)}\psi(q)^{d-1}
\end{equation}
so that we can apply Gallagher's extension of Khintchine's Theorem to
the hyperplane passing through $x\in\mathbb{R}$.  To this end, let us
define the subset $A(x,R)\subseteq\mathbb{N}$ as the concatenation $
A(x,R)=\left\{A_1^{(x,R)}, A_2^{(x,R)}, \dots \right\} $ of blocks
\[
A_\ell^{(x,R)} = [\Sigma_{\al_\ell}, \Sigma_{\en_{\ell}}-1]\cap
\mathbb{N}.
\]
We prove the following lemma.

\begin{lemma}\label{lem:subseries}
  Let $x\in\mathbb{R}\backslash\mathbb{Q}$. If there exists a number
  $R\geq 1$ such that
  \begin{equation}\label{eqn:subseries}
    \sum_{q\in A(x,R)}\psi(q)^d
  \end{equation}
  diverges, then~\eqref{eqn:gallaghersum} diverges.
\end{lemma}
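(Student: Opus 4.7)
My plan is to reduce to an application of Lemma~\ref{lem:schmidt} on each sub-block $(\Sigma_{n-1},\Sigma_n]$ of $A(x,R)$ and then telescope the resulting lower bounds with a careful index shift. First I would apply Lemma~\ref{lem:wlog} to replace $\psi$ by an enlarged approximating function $\tilde\psi\geq\psi$ with $L_n\tilde\psi(\Sigma_n)\geq R$ eventually; divergence of $\sum_{q\in\mathcal{Q}(x,\tilde\psi)}\tilde\psi(q)^{d-1}$ then forces divergence for $\psi$, and since $\tilde\psi\geq\psi$ the hypothesis upgrades to $\sum_{q\in A(x,R)}\tilde\psi(q)^d=\infty$. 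For each sub-block $(\Sigma_{n-1},\Sigma_n]\subseteq A(x,R)$, Lemma~\ref{lem:schmidt} gives $\#(\mathcal{Q}(x,\tilde\psi)\cap(\Sigma_{n-1},\Sigma_n])\gg L_n\tilde\psi(\Sigma_n)$, and combined with $\tilde\psi(q)^{d-1}\geq\tilde\psi(\Sigma_n)^{d-1}$ on the sub-block this yields
\[
\sum_{q\in\mathcal{Q}(x,\tilde\psi)\cap(\Sigma_{n-1},\Sigma_n]}\tilde\psi(q)^{d-1}\;\gg\;L_n\tilde\psi(\Sigma_n)^d.
\]

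The naive direct comparison $\sum_{q\in(\Sigma_{n-1},\Sigma_n]}\tilde\psi(q)^d\leq L_n\tilde\psi(\Sigma_{n-1})^d$ points the wrong way, because $\tilde\psi$ may drop substantially across the sub-block. I would circumvent this by an index shift onto the \emph{next} sub-block: within a single block $B_\ell$ the values $\{L_n:n\in[\alpha_\ell,\omega_\ell]\}$ are consecutive integers, so $L_n\asymp L_{n+1}$ for consecutive such $n,n+1$; while monotonicity of $\tilde\psi$ gives $L_{n+1}\tilde\psi(\Sigma_n)^d\geq\sum_{q\in(\Sigma_n,\Sigma_{n+1}]}\tilde\psi(q)^d$. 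Combining, for each $n\in[\alpha_\ell+1,\omega_\ell-1]$,
\[
\sum_{q\in\mathcal{Q}(x,\tilde\psi)\cap(\Sigma_{n-1},\Sigma_n]}\tilde\psi(q)^{d-1}\;\gg\;\sum_{q\in(\Sigma_n,\Sigma_{n+1}]}\tilde\psi(q)^d.
\]
Summing over such $n$ and then over $\ell$, the right-hand side telescopes to $\sum_{q\in A(x,R)}\tilde\psi(q)^d-\sum_\ell M_\ell$, where $M_\ell=\sum_{q\in(\Sigma_{\alpha_\ell},\Sigma_{\alpha_\ell+1}]}\tilde\psi(q)^d$ is the first-sub-block mass, while the left-hand side is bounded by $\sum_{q\in\mathcal{Q}(x,\tilde\psi)\cap A(x,R)}\tilde\psi(q)^{d-1}$.

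The hardest point will be showing that these excised first-sub-block masses $\{M_\ell\}$ do not absorb the divergent total. The within-block monotonicity recursion $M_{n+1}\leq(L_{n+1}/L_n)M_n\leq 2M_n$, which follows from $\tilde\psi$ non-increasing together with the consecutive-integer structure of the $L_n$, keeps the first sub-block from dominating the block total $T_\ell$ when $B_\ell$ is long, so for long blocks the geometric decay renders the loss negligible. Short blocks $B_\ell$ containing only a few $L$-values would require a separate bound on the combined first-sub-block contribution, presumably comparing $\tilde\psi(\Sigma_{\alpha_\ell})$ against $\tilde\psi$ at the right end of the previous block $B_{\ell-1}$ to absorb $M_\ell$ into the $\tilde\psi^d$-mass already accounted for there. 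Granting this boundary accounting, divergence of $\sum_{q\in A(x,R)}\tilde\psi(q)^d$ forces divergence of $\sum_{q\in\mathcal{Q}(x,\tilde\psi)}\tilde\psi(q)^{d-1}$, and hence, by Step~1, of $\sum_{q\in\mathcal{Q}(x,\psi)}\psi(q)^{d-1}$.
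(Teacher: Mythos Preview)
Your approach is essentially the paper's: reduce via Lemma~\ref{lem:wlog} to the case $L_n\psi(\Sigma_n)\geq R$, apply Lemma~\ref{lem:schmidt} on each sub-block $(\Sigma_{n-1},\Sigma_n]$ to get a lower bound $\gg L_n\psi(\Sigma_n)^d$, and then use the within-block relation $L_{n+1}=L_n+1$ to shift indices and telescope back to $\sum_{q\in A(x,R)}\psi(q)^d$.

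The one substantive difference is that you restrict Lemma~\ref{lem:schmidt} to sub-blocks $(\Sigma_{n-1},\Sigma_n]\subseteq A(x,R)$, whereas the paper applies it to \emph{all} $n$. There is no reason for the restriction: Lemma~\ref{lem:schmidt} is valid for every $n$, since the $(2+R)$-bounded gap ratio holds for every $L_n$ by construction, irrespective of whether the corresponding $q$-interval lies in $A(x,R)$. By summing over all $n$ the paper has the term $L_{\alpha_\ell}\psi(\Sigma_{\alpha_\ell})^d$ in hand, and after the index shift $L_n\gg L_{n+1}$ (valid for $n\in[\alpha_\ell,\omega_\ell-1]$) this term covers exactly the first sub-block $[\Sigma_{\alpha_\ell},\Sigma_{\alpha_\ell+1}-1]$. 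No boundary accounting is required; the telescoping recovers each $A_\ell$ in full.

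Your restriction manufactures the ``first-sub-block mass'' problem, and your proposed resolution of it does not work. The recursion $M_{n+1}\leq(L_{n+1}/L_n)M_n\leq 2M_n$ bounds \emph{later} sub-block masses by \emph{earlier} ones, which is the wrong direction: it tells you nothing about $M_{\alpha_\ell+1}$ being dominated by $M_{\alpha_\ell+2}+M_{\alpha_\ell+3}+\cdots$. Since $\tilde\psi$ may drop sharply immediately after the first sub-block, the first sub-block can genuinely carry almost all of the block's $\tilde\psi^d$-mass, so there is no ``geometric decay'' rescuing you. The correct fix is simply to drop the restriction and sum Lemma~\ref{lem:schmidt} over all $n$; then the index shift runs over $n\in[\alpha_\ell,\omega_\ell-1]$ and the first sub-block is included automatically.
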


\begin{proof}[\textbf{Proof}]
  We write partial sums of~\eqref{eqn:gallaghersum} along
  $\{\Sigma_N\}$ as
  \begin{equation*}
    \sum_{\substack{q=1 \\ q \in \mathcal{Q}(x,\psi)}}^{\Sigma_N} \psi(q)^{d-1} = \sum_{n = 0}^{N-1}\sum_{\substack{q=\Sigma_n +1 \\ q \in \mathcal{Q}(x,\psi)}}^{\Sigma_{n+1}} \psi(q)^{d-1},
  \end{equation*}
  where $\Sigma_N = L_1+L_2+\dots+L_N$, and $\Sigma_0 = 0$. Since
  $\psi$ is non-increasing we can bound below by
  \[
  \geq \sum_{n = 0}^{N-1}
  \psi(\Sigma_{n+1})^{d-1}\sum_{\substack{q=\Sigma_n+1\\ q \in
      \mathcal{Q}(x,\psi)}}^{\Sigma_{n+1}} 1
  \]
  and Lemma~\ref{lem:wlog} allows us to assume without loss of
  generality that $L_n \psi(\Sigma_n)\geq 2+R$, so that we can apply
  Lemma~\ref{lem:schmidt} to bound by
  \begin{equation*}
    \gg \sum_{n = 1}^{N} L_n \,\psi(\Sigma_n)^d.
  \end{equation*}
  Re-writing along the subsequence $\{\en_m -1\}$,
  \[
  \sum_{n = 1}^{\en_m - 1} L_n \,\psi(\Sigma_n)^d = \sum_{\ell=0}^m
  \sum_{n = \al_\ell}^{\en_{\ell} -1} L_n \,\psi(\Sigma_n)^d +
  \sum_{\ell=0}^{m-1} L_{\en_\ell} \,\psi(\Sigma_{\en_\ell})^d,
  \]
  we can safely ignore the second sum because it converges as
  $m\to\infty$. Since $L_{n+1} = L_n+1$, except when $n = \en_\ell$,
  \[
  \sum_{\ell=0}^m \sum_{n = \al_\ell}^{\en_{\ell} -1} L_n
  \,\psi(\Sigma_n)^d \gg \sum_{\ell=0}^m \sum_{n =
    \al_\ell}^{\en_{\ell} -1} L_{n+1} \,\psi(\Sigma_n)^d
  \geq \sum_{\ell=0}^m \sum_{n = \al_\ell}^{\en_{\ell} -1}
  \sum_{q=\Sigma_n}^{\Sigma_{n+1}-1}\psi(q)^d = \sum_{\ell=0}^m
  \sum_{q=\Sigma_{\al_\ell}}^{\Sigma_{\en_{\ell}}-1}\psi(q)^d,
  \]
  and taking $m\to\infty$, we have bounded~\eqref{eqn:gallaghersum}
  below by~\eqref{eqn:subseries} which implies the result.
\end{proof}

The challenge now is to determine when we can find $R\geq 1$ such
that~\eqref{eqn:subseries} diverges.

\subsection{Proofs of Theorems~\ref{thm:divergence}
  and~\ref{thm:upper}}

Since $\sum_{q\in\mathbb{N}}\psi(q)^d$ diverges, it is sufficient to
find $A(x,R)$ with positive lower asymptotic density in $\mathbb{N}$.

\begin{lemma}\label{lem:lad}
  We have
  \[
  \operatorname{\underline{d}}(A(x,R))>0\qquad\iff\qquad\limsup_{m\to\infty}
  \frac{L_{\al_m}^R}{\Sigma_{\al_m}^R} < 1,
  \]
  that is, $A(x,R)$ has positive lower asymptotic density for some
  $R\geq 1$ if and only if $x\in\mathbb{R}\backslash\mathbb{Q}$ has
  the positive density property.
\end{lemma}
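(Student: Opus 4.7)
The plan is to compute $\operatorname{\underline{d}}(A(x,R))$ explicitly and compare it directly to the positive density property.

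First I will observe that the density function $f(n):=|A(x,R)\cap[1,n]|/n$ strictly increases when $n$ advances through a block $A_\ell=[\Sigma_{\al_\ell},\Sigma_{\en_\ell}-1]\cap\mathbb{N}$ (each step adds $1$ to both numerator and denominator, which increases the ratio whenever $|A\cap[1,n]|<n$) and strictly decreases when $n$ advances through a gap $[\Sigma_{\en_\ell},\Sigma_{\al_{\ell+1}}-1]\cap\mathbb{N}$. Hence its local minima are located at the right endpoints of the gaps, $n=\Sigma_{\al_{m+1}}-1$, and, setting $U_m:=\sum_{\ell=1}^m|A_\ell|$ and using $\Sigma_{\al_{m+1}}=\Sigma_{\en_m}+L_{\al_{m+1}}$,
\[
\operatorname{\underline{d}}(A(x,R))=\liminf_{m\to\infty}\frac{U_m}{\Sigma_{\en_m}+L_{\al_{m+1}}}.
\]

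Next I will trap $U_m$ between $\tfrac{1}{2}\Sigma_{\en_m}$ and $\Sigma_{\en_m}$. Writing $T_m:=\sum_{\ell=1}^m L_{\al_\ell}$, the upper bound $U_m\leq\Sigma_{\en_m}$ comes from the decomposition $\Sigma_{\en_m}=U_m+T_m$. For the lower bound, the hypothesis $R\geq 1$ forces each block $B_\ell$ to contain at least two consecutive integers starting at $L_{\al_\ell}$, so
\[
|A_\ell|=L_{\al_\ell+1}+\cdots+L_{\en_\ell}\geq L_{\al_\ell}+1>L_{\al_\ell};
\]
summing gives $U_m>T_m$, hence $U_m>\Sigma_{\en_m}/2$. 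Setting $r_m:=L_{\al_{m+1}}/\Sigma_{\en_m}$, this produces the two-sided estimate
\[
\frac{1}{2(1+r_m)}<\frac{U_m}{\Sigma_{\en_m}+L_{\al_{m+1}}}\leq\frac{1}{1+r_m},
\]
so $\operatorname{\underline{d}}(A(x,R))>0$ precisely when $\{r_m\}$ is bounded above.

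Finally I will match this boundedness with the positive density property using the identity
\[
\frac{L_{\al_{m+1}}}{\Sigma_{\al_{m+1}}}=\frac{L_{\al_{m+1}}}{\Sigma_{\en_m}+L_{\al_{m+1}}}=\frac{r_m}{1+r_m},
\]
which shows that $\limsup_m L_{\al_{m+1}}/\Sigma_{\al_{m+1}}<1$ is equivalent to $\limsup_m r_m<\infty$; after shifting the index by one this is exactly Definition~\ref{def:pdp}. The degenerate case in which the sequence $\{k_m\}$ terminates---possible only if $x$ is badly approximable and $R$ is large enough---is handled separately: then $A(x,R)$ is cofinite in $\mathbb{N}$, so $\operatorname{\underline{d}}(A(x,R))=1$ and the positive density condition holds vacuously. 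The main obstacle is the first step, namely correctly identifying the gap endpoints as the subsequence along which the liminf is taken and carefully bookkeeping the identities $\Sigma_{\en_m}=U_m+T_m$ and $\Sigma_{\al_{m+1}}=\Sigma_{\en_m}+L_{\al_{m+1}}$; the combinatorial inequality $|A_\ell|>L_{\al_\ell}$, which is precisely where $R\geq 1$ enters, then makes the rest of the equivalence automatic.
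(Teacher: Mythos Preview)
Your argument is correct and slightly different from the paper's. Both proofs begin the same way---identifying the local minima of $n\mapsto |A(x,R)\cap[1,n]|/n$ at the gap endpoints $n=\Sigma_{\al_{m+1}}-1$---but then diverge. The paper writes
\[
\operatorname{\underline{d}}(A(x,R))=1-\limsup_{m\to\infty}\frac{L_{\al_{m+1}}+L_{\al_m}+\cdots+L_{\al_1}}{\Sigma_{\al_{m+1}}}
\]
and then asserts that $(L_{\al_m}+\cdots+L_{\al_1})/\Sigma_{\al_{m+1}}\to 0$, reducing everything to the single leading term $L_{\al_{m+1}}/\Sigma_{\al_{m+1}}$. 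You instead prove the coarser but entirely self-contained inequality $U_m>T_m$ (via $|A_\ell|>L_{\al_\ell}$), which sandwiches the density between $\tfrac{1}{2(1+r_m)}$ and $\tfrac{1}{1+r_m}$ and delivers the equivalence without needing the $o(1)$ claim. This buys you a cleaner, more elementary argument; the paper's route gives the exact formula for $\operatorname{\underline{d}}(A(x,R))$ but leaves the vanishing of the tail $T_m/\Sigma_{\al_{m+1}}$ unproved.

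One small caveat: your claim that $R\geq 1$ forces $|B_\ell|\geq 2$ can fail for $\ell=1$, since $B_1=[1,q_{k_1}]$ and $q_{k_1}=1$ is possible (e.g.\ when $a_1>R$). For $\ell\geq 2$ one has $|B_\ell|\geq Rq_{k_{\ell-1}}+1\geq 2$, so $|A_\ell|>L_{\al_\ell}$ holds there; the single exceptional term only shifts $U_m-T_m$ by a constant and does not affect the liminf. It is worth saying this explicitly.
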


\begin{proof}[\textbf{Proof}]
  Since $A(x,R)$ is made up of blocks of consecutive integers, the
  lower asymptotic density is achieved by computing along the
  subsequence corresponding to the points just before the left
  end-points of each block. That is,
  \begin{multline*}
    \operatorname{\underline{d}}(A(x,R)) = \liminf_{m\to\infty} \frac{\sum_{\ell\leq m}\abs{A_\ell}}{\min A_{m+1} - 1} = \liminf_{m\to\infty} \frac{\sum_{\ell\leq m}(\Sigma_{\en_{\ell}}- \Sigma_{\al_\ell})}{\Sigma_{\al_{m + 1}}-1} \\
    = \liminf_{m\to\infty} \frac{\Sigma_{\al_{m+1}} - \sum_{\ell\leq m+1}(L_{\al_\ell})}{\Sigma_{\al_{m + 1}}-1} \\
    = \liminf_{m\to\infty} \frac{\Sigma_{\al_{m+1}} - (L_{\al_{m + 1}} + L_{\al_m}+ L_{\al_{m-1}} + \dots+ L_{\al_1} + L_{\al_0})}{\Sigma_{\al_m + 1}-1} \\
    = 1 - \limsup_{m\to\infty} \left(\frac{L_{\al_{m + 1}} +
        L_{\al_m}+ L_{\al_{m-1}} + \dots+ L_{\al_1} + L_{\al_0} -
        1}{\Sigma_{\al_{m + 1}}-1}\right)
  \end{multline*}
  and so $\operatorname{\underline{d}}(A(x,R))>0$ if and only if
  \[
  \limsup_{m\to\infty} \left(\frac{L_{\al_{m + 1}} + L_{\al_m}+
      L_{\al_{m-1}} + \dots+ L_{\al_1} + L_{\al_0}}{\Sigma_{\al_{m +
          1}}}\right) < 1,
  \]
  but
  \[
  \lim_{m\to\infty} \left(\frac{L_{\al_m}+ L_{\al_{m-1}} + \dots+
      L_{\al_1} + L_{\al_0}}{\Sigma_{\al_{m + 1}}}\right) = 0,
  \]
  so we have proved the claim.
\end{proof}

Since divergent series diverge along subseries of positive lower
asymptotic density, this lemma all but solves the problem for fibers
over points with the positive density property. The following lemma
shows that, at least for some approximating functions, one can deal
with fibers over base-points that do not have the positive density
property.

\begin{lemma}\label{lem:uad}
  For any $x\in\mathbb{R}\backslash\mathbb{Q}$ and $R\geq 1$ we have
  that $A(x,R)$ has positive upper asymptotic density. \end{lemma}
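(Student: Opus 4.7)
My plan is to compute the upper density directly along the subsequence $N_m := \Sigma_{\omega_m}-1$ corresponding to the right endpoints of the blocks $A_m$. First I would dispose of the degenerate case: if the sequence $\{k_m^{x,R}\}$ terminates at some index $k_t$, then $B_{t+1}$ is the unbounded block $[q_{k_t+1}-Rq_{k_t},\infty)\cap\mathbb{N}$ and $A_{t+1}$ is an infinite final block, so $\abs{A(x,R)\cap[1,N]}/N\to 1$ as $N\to\infty$. Hence I may assume the sequence does not terminate and that $\Sigma_{\omega_m}\to\infty$.

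Next, I would rewrite $\abs{A_\ell} = \Sigma_{\omega_\ell}-\Sigma_{\alpha_\ell} = \Sigma_{B_\ell}-L_{\alpha_\ell}$, where $L_{\alpha_\ell} = q_{k_{\ell-1}+1}-Rq_{k_{\ell-1}}$ is the smallest integer in the block $B_\ell$. Since $\sum_{\ell=1}^m \Sigma_{B_\ell} = \Sigma_{\omega_m}$, telescoping yields
\[
\abs{A(x,R)\cap[1,N_m]} \;=\; \sum_{\ell=1}^m \abs{A_\ell} \;=\; \Sigma_{\omega_m} - \sum_{\ell=1}^m L_{\alpha_\ell},
\]
so the task reduces to showing that $\sum_{\ell=1}^m L_{\alpha_\ell}/\Sigma_{\omega_m}$ stays bounded below $1$ as $m\to\infty$.

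The key estimate is that $L_{\alpha_\ell}$, being the minimum of a block $B_\ell$ of consecutive integers, is at most its average: $L_{\alpha_\ell} \leq \Sigma_{B_\ell}/\abs{B_\ell}$. From~\eqref{BmRlength} together with $q_{k_\ell}\geq q_{k_{\ell-1}+1}$, I get $\abs{B_\ell} = q_{k_\ell}-q_{k_{\ell-1}+1}+Rq_{k_{\ell-1}}+1 \geq Rq_{k_{\ell-1}}+1 \geq R+1$ for every $\ell\geq 2$. Hence $L_{\alpha_\ell} \leq \Sigma_{B_\ell}/(R+1)$ for $\ell\geq 2$, and summing gives
\[
\sum_{\ell=1}^m L_{\alpha_\ell} \;\leq\; L_{\alpha_1} + \frac{1}{R+1}\sum_{\ell=2}^m \Sigma_{B_\ell} \;\leq\; L_{\alpha_1} + \frac{\Sigma_{\omega_m}}{R+1}.
\]
Dividing by $N_m$ and letting $m\to\infty$ shows that the upper asymptotic density of $A(x,R)$ is at least $1 - 1/(R+1) = R/(R+1)$, which is positive for every $R\geq 1$.

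The only real trick is combining the ``minimum~$\leq$~average'' inequality with the trivial lower bound $\abs{B_\ell} \geq R+1$; everything else is a routine telescoping calculation and, unlike for lower density, no positive density hypothesis on $x$ is needed.
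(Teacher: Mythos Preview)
Your argument follows essentially the same route as the paper's: compute the density ratio along the right endpoints $N_m=\Sigma_{\omega_m}-1$ of the blocks $A_m$, telescope to obtain $\sum_{\ell\le m}|A_\ell|=\Sigma_{\omega_m}-\sum_{\ell\le m}L_{\alpha_\ell}$, and then argue that $\sum_{\ell\le m}L_{\alpha_\ell}/\Sigma_{\omega_m}$ stays bounded away from $1$. The paper stops at this point and simply asserts that the last inequality ``is always the case''; you actually justify it, via the clean observation that $L_{\alpha_\ell}\le \Sigma_{B_\ell}/|B_\ell|$ together with $|B_\ell|\ge R+1$, thereby obtaining the explicit quantitative bound $\overline{\mathrm d}(A(x,R))\ge R/(R+1)$. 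So your proof is the paper's proof with the final hand-wave replaced by a concrete estimate.
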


\begin{proof}[\textbf{Proof}]
  Since $A(x,R)$ is made up of blocks of consecutive integers, the
  upper asymptotic density is achieved by computing along the
  subsequence corresponding to the right end-points of each
  block. That is,
  \begin{multline*}
    \operatorname{\overline{d}}(A(x,R)) = \limsup_{m\to\infty} \frac{\sum_{\ell\leq m}\abs{A_\ell}}{\max A_m} = \limsup_{m\to\infty} \frac{\sum_{\ell\leq m}(\Sigma_{\en_{\ell}}- \Sigma_{\al_\ell})}{\Sigma_{\en_m}-1} \\
    = \limsup_{m\to\infty} \frac{\Sigma_{\en_m} - \sum_{\ell\leq m}(L_{\al_\ell})}{\Sigma_{\en_m}-1} \\
    = \limsup_{m\to\infty} \frac{\Sigma_{\en_m} - (L_{\al_m}+ L_{\al_{m-1}} + \dots+ L_{\al_1} + L_{\al_0})}{\Sigma_{\en_m}-1} \\
    = 1 - \liminf_{m\to\infty} \left(\frac{L_{\al_m}+ L_{\al_{m-1}} +
        \dots+ L_{\al_1} + L_{\al_0} - 1}{\Sigma_{\en_m}-1}\right)
  \end{multline*}
  and so $\operatorname{\overline{d}}(A(x,R))>0$ if and only if
  \[
  \liminf_{m\to\infty} \left(\frac{L_{\al_m}+ L_{\al_{m-1}} + \dots+
      L_{\en_1} + L_{\al_0}}{\Sigma_{\en_m}}\right) < 1,
  \]
  but this is always the case.
\end{proof}

We can now prove that almost every point on every fiber is
$\psi$-approximable, if $\psi$ happens to have the property that
$\sum_A \psi(q)^d$ diverges for every $A\subseteq\mathbb{N}$ with
positive upper asymptotic density.
\begin{proof}[\textbf{Proof of Theorem~\ref{thm:upper}}]
  Lemma~\ref{lem:uad} tells us that for any
  $x\in\mathbb{R}\backslash\mathbb{Q}$ and $R\geq 1$, the set $A(x,R)$
  has positive upper asymptotic density. By assumption,
  then,~\eqref{eqn:subseries} diverges. Therefore, by
  Lemma~\ref{lem:subseries}, the sum~\eqref{eqn:gallaghersum}
  diverges. Since $d-1\geq 2$, Gallagher's Theorem applies to the
  hyperplane $\{x\}\times\mathbb{R}^{d-1}$ and approximating function
  $\psi$.
\end{proof}

These density considerations only give \emph{sufficient} conditions
for divergence, and the following lemma serves to show that they are
not necessary.

\begin{lemma}\label{lem:construction}
  For any $R\geq 1$, there are uncountably many
  $x\in\mathbb{R}\backslash\mathbb{Q}$ of any given Diophantine type
  such that~\eqref{eqn:subseries} diverges.
\end{lemma}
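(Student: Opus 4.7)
The plan is to construct $x$ by specifying its continued fraction expansion in stages, arranging that its Diophantine type equals $\tau$ while, simultaneously, each block $A_\ell := A_\ell^{(x,R)}$ absorbs at least a unit of mass from $\sum_q \psi(q)^d$. The starting observation is that divergence of $\sum_q \psi(q)^d$ implies that for every $A\in\mathbb{N}$ there exists $B>A$ with $\sum_{q=A}^{B}\psi(q)^d\geq 1$.

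The construction is inductive. Suppose we have fixed $a_1,\dots,a_{k_\ell}$ where $k_\ell$ is the $\ell$-th jump index (with $k_0=-1$). Set $a_{k_\ell+1} := t_\ell := \max\{R+1,\,\lfloor q_{k_\ell}^{\tau-1}\rfloor\}$; then set $a_{k_\ell+2}=\dots=a_{k_\ell+s_\ell}=1$ for a positive integer $s_\ell\geq 2$ to be chosen, and declare $k_{\ell+1}:=k_\ell+s_\ell$. Lemma~\ref{lem:fibonacci} gives $q_{k_{\ell+1}} \geq F(s_\ell)\,q_{k_\ell+1}$, so for $s_\ell$ large enough we have $q_{k_{\ell+1}}\geq 2q_{k_\ell+1}$, and the last part of Lemma~\ref{lem:BmRsum} yields $\Sigma_{B_{\ell+1}} \gg q_{k_{\ell+1}}^2 \to \infty$. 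Crucially, the left endpoint
\[
\Sigma_{\al_{\ell+1}} = \Sigma_{\en_\ell}+L_{\al_{\ell+1}}, \qquad L_{\al_{\ell+1}}=q_{k_\ell+1}-Rq_{k_\ell},
\]
is fully determined by stages $\leq\ell$ and the choice of $t_\ell$, and in particular does not depend on $s_\ell$. Therefore, applying the opening observation with $A=\Sigma_{\al_{\ell+1}}$, we can take $s_\ell$ large enough that $\sum_{q\in A_{\ell+1}}\psi(q)^d\geq 1$. Iterating and summing over $\ell$ yields $\sum_{q\in A(x,R)}\psi(q)^d=\infty$, which is~\eqref{eqn:subseries}.

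The Diophantine type of $x$ is exactly $\tau$: at the jump indices $n=k_\ell$ one has $\log q_{n+1}/\log q_n \to \tau$ by the choice of $t_\ell$; at intermediate indices, $a_n=1$ forces $q_{n+1}\leq 2q_n$ and hence $\log q_{n+1}/\log q_n\to 1$; so $\limsup_n\log q_{n+1}/\log q_n=\tau$. In the edge case $\tau=1$ the same scheme works with the constant choice $t_\ell\equiv R+1$. For uncountability, observe that at each stage any sufficiently large $s_\ell$ realizes the mass bound; making an independent binary choice $s_\ell\in\{S_\ell,\,S_\ell+1\}$ at every stage (where $S_\ell$ is the minimum admissible value) produces $2^{\aleph_0}$ distinct continued fraction expansions and hence $2^{\aleph_0}$ distinct irrationals $x$ of Diophantine type $\tau$ satisfying~\eqref{eqn:subseries}.

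The principal obstacle is verifying the claim that $\Sigma_{\al_{\ell+1}}$ is independent of $s_\ell$, so that growing $s_\ell$ genuinely extends $A_{\ell+1}$ to the right and accumulates new mass. This comes down to unwinding the definitions in~\S\ref{sec:brp} together with Lemma~\ref{lem:BmRsum}, and it is the geometric input that makes the whole scheme work. A secondary bookkeeping subtlety is the first stage (with $k_0=-1$, $q_{-1}=0$, $q_0=1$), where $B_1=\{1\}$ is a single element; this is harmless, since the mass contributed by $A_{\ell+1}$ for $\ell\geq 1$ already suffices for divergence.
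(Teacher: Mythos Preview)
Your argument is correct and follows essentially the same construction as the paper: build the continued fraction of $x$ in stages, at each stage choosing a long run of small partial quotients so that the next block $A_{\ell+1}$ swallows at least a unit of $\sum\psi(q)^d$, and choosing the single large partial quotient $a_{k_\ell+1}$ to pin down the Diophantine type. The paper phrases the key independence observation dually (that $\Psi(m)$ is unaffected by $a_{k_m+1}$) and is much terser, leaving the uncountability, the explicit formula for $t_\ell$, and the edge case $\tau=1$ to the reader; you supply these details, which is a genuine improvement in completeness.
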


\begin{proof}[\textbf{Proof}]
  We offer a construction. Fix $R\geq 1$. Let
  \[
  \Psi(m):= \sum_{\ell=0}^m
  \sum_{q=\Sigma_{\al_\ell}}^{\Sigma_{\en_{\ell}}-1}\psi(q)^d
  \]
  be a partial sum of~\eqref{eqn:subseries}. The sequence
  $\{\Psi(m)\}$ is increasing, and notice that we can make
  \[
  \Psi(m) - \Psi(m-1) =
  \sum_{q=\Sigma_{\al_m}}^{\Sigma_{\en_m}-1}\psi(q)^d
  \]
  as large as we wish by choosing $\Delta k_{m-1} - 1:=k_m - k_{m-1} -
  1$ arbitrarily large, so we can make $\Psi(m)\to\infty$ simply by
  prescribing $\{k_m\}$.

  To see that we can achieve any Diophantine type, we observe that at
  each step, after having chosen $k_m$ so that $\Psi(m) - \Psi(m-1)$
  has the desired size, we are free to choose $a_{k_m+1}$ without
  affecting $\Psi(m)$. Therefore we can ensure that any given
  $\sigma\in[1, \infty)$ is the infimum over $\tau\in\mathbb{R}$
  satisfying $q_{k_m+1}\ll q_{k_m}^\tau$ as $m\to\infty$.
\end{proof}

We are now prepared to prove the following theorem, from which
Theorem~\ref{thm:divergence} immediately follows.
\begin{theorem}\label{thm:gapsumdiv}
  Let $d\geq 3$. If $\psi$ is an approximating function such that the
  sum $\sum_{q\in\mathbb{N}} \psi(q)^d$ diverges, then
  \[
  \sum_{q\in \mathcal{Q}(x ,\psi)} \psi(q)^{d-1}=\infty
  \]
  for:
  \begin{itemize}
  \item[\rm\textbf{(a)}] Any $x \in\mathbb{Q}$.
  \item[\rm\textbf{(b)}] Any $x\in\mathbb{R}\backslash\mathbb{Q}$ with
    the positive density property.
  \item[\rm\textbf{(c)}] Uncountably many
    $x\in\mathbb{R}\backslash\mathbb{Q}$ of any given Diophantine
    type.
  \end{itemize}
\end{theorem}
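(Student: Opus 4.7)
My plan is to handle the three parts separately, since each invokes a different piece of the machinery built up in this section. In cases (b) and (c) the strategy is to apply Lemma~\ref{lem:subseries}, which reduces the problem to exhibiting some $R\geq 1$ for which $\sum_{q\in A(x,R)}\psi(q)^d$ diverges. Part (a) lies outside the irrational-only hypothesis of that lemma and must be argued directly.

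For part (a), I would write $x=p/q_0$ in lowest terms and observe that $\norm{(nq_0)x}=0$, so $\mathcal{Q}(x,\psi)\supseteq q_0\mathbb{N}$, which gives
\[
\sum_{q\in\mathcal{Q}(x,\psi)}\psi(q)^{d-1} \;\geq\; \sum_{n\geq 1}\psi(nq_0)^{d-1}.
\]
By monotonicity of $\psi$, a standard binning comparison shows the right-hand side is bounded below by a positive multiple of $\sum_q\psi(q)^{d-1}$. Observing that $\psi(q)\leq 1$ eventually (else the sum is already trivially divergent), one has $\psi(q)^{d-1}\geq\psi(q)^d$ for large $q$, and the hypothesis $\sum\psi(q)^d=\infty$ delivers the conclusion.

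For part (b), the positive density property supplies $R\geq 1$ with $\limsup_{m\to\infty} L^R_{\alpha_m}/\Sigma^R_{\alpha_m} < 1$, and Lemma~\ref{lem:lad} then yields $\operatorname{\underline{d}}(A(x,R))>0$. Since $(\psi(q)^d)$ is a non-increasing, non-negative sequence with divergent sum, any subseries indexed by a set of positive lower density is again divergent. This is a routine Abel summation: writing $B_N := \Abs{A(x,R)\cap[1,N]}\geq \delta N$ for large $N$, partial summation against the non-increasing sequence $\psi(q)^d$ shows $\sum_{q\in A(x,R),\, q\leq N}\psi(q)^d$ is bounded below by a positive multiple of $\sum_{q\leq N}\psi(q)^d$, which diverges by hypothesis. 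Lemma~\ref{lem:subseries} then closes the case.

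For part (c), Lemma~\ref{lem:construction} directly furnishes, for any prescribed Diophantine type and any $R\geq 1$, uncountably many irrationals $x$ of that type for which $\sum_{q\in A(x,R)}\psi(q)^d$ diverges; Lemma~\ref{lem:subseries} concludes once more. The only step requiring any genuine (if entirely standard) computation is the Abel summation in part (b); the rest of the proof is simply the bookkeeping of invoking the lemmas already established.
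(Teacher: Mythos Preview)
Your proposal is correct and follows essentially the same route as the paper: part~(a) via the arithmetic progression $q_0\mathbb{N}\subset\mathcal{Q}(x,\psi)$ and monotonicity, parts~(b) and~(c) via Lemma~\ref{lem:lad} (resp.\ Lemma~\ref{lem:construction}) feeding into Lemma~\ref{lem:subseries}. The only differences are cosmetic---you spell out the Abel summation in~(b) and the $\psi(q)^{d-1}\geq\psi(q)^d$ step in~(a), both of which the paper leaves implicit.
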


\begin{proof}[\textbf{Proof}]
  We treat the different parts of the theorem separately.

  \textbf{Part~(a):} In the case of rational $x = a/b$, the set
  $\mathcal{Q}(x, \psi)$ contains the arithmetic sequence
  $\{kb\}_{k\in\mathbb{N}}$. Then
  \[
  \sum_{q\in \mathcal{Q}(x ,\psi)} \psi(q)^{d-1} \gg
  \sum_{k\in\mathbb{N}} b\, \psi(kb)^{d-1} \gg \sum_{q\in\mathbb{N}}
  \psi(q)^{d-1} =\infty.
  \]
  We have used here that $\psi$ is non-increasing. (Notice that this
  does not require $d\geq 3$.)

  \textbf{Part~(b):} If $x\in\mathbb{R}\backslash\mathbb{Q}$ has the
  positive density property, then Lemma~\ref{lem:lad} implies that
  there is some $R\geq 1$ such that $A(x, R)$ has positive lower
  asymptotic density. This implies that~\eqref{eqn:subseries}
  diverges, which by Lemma~\ref{lem:subseries} implies
  that~\eqref{eqn:gallaghersum} diverges.

  \textbf{Part~(c):} By Lemma~\ref{lem:construction}, there are
  uncountably many $x\in \mathbb{R}\backslash\mathbb{Q}$ of any given
  Diophantine type such that~\eqref{eqn:subseries} diverges, and again
  Lemma~\ref{lem:subseries} implies that~\eqref{eqn:gallaghersum}
  diverges.
\end{proof}

\begin{proof}[\textbf{Proof of Theorem~\ref{thm:divergence}}]
  Theorem~\ref{thm:divergence} is proved by applying Gallagher's
  Theorem to fibers over the base points in
  Theorem~\ref{thm:gapsumdiv}.
\end{proof}

\subsection{Proofs of Theorems~\ref{thm:prototype}
  and~\ref{thm:prototypes}}

By the discussion~\S\ref{sec:onproofs}, the proof of
Theorem~\ref{thm:prototype} reduces to the following lemma.

\begin{lemma}\label{lem:prototype}
  If $x\in \mathbb{R}\backslash\mathbb{Q}$ is not Liouville,
  then~\eqref{eqn:subseries} diverges for the approximating function
  $\psi(q) = (q\log q)^{-1/d}$.
\end{lemma}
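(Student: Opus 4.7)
The plan is to apply Lemma~\ref{lem:subseries}, reducing the task to producing some $R\geq 1$ for which $\sum_{q\in A(x,R)}(q\log q)^{-1}$ diverges. If $x$ is badly approximable, choose $R$ larger than the supremum of its partial quotients: then $\{k_m\}$ terminates, $A(x,R)$ becomes cofinite in $\mathbb{N}$, and divergence is trivial. Otherwise, fix any finite $\sigma$ with $x\notin\mathcal{W}_1(\sigma)$ and any $R\geq 1$; the sequence $\{k_m\}$ is infinite.

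For each block $A_\ell=[\Sigma_{\al_\ell},\Sigma_{\en_\ell}-1]$, the integral estimate $\int (x\log x)^{-1}\,dx=\log\log x$ gives
\[
\sum_{q\in A_\ell}\frac{1}{q\log q}\;\geq\;\log\frac{\log\Sigma_{\en_\ell}}{\log\Sigma_{\al_\ell}}.
\]
I would then split according to whether $q_{k_{\ell-1}+1}>q_{k_{\ell-1}}^2$ (\emph{fast}) or $q_{k_{\ell-1}+1}\leq q_{k_{\ell-1}}^2$ (\emph{slow}). In the fast case, Lemma~\ref{lem:BmRsum} yields $\Sigma_{\en_\ell}\gg q_{k_\ell}q_{k_{\ell-1}}\geq q_{k_{\ell-1}+1}q_{k_{\ell-1}}$, while $\Sigma_{\al_\ell}\asymp q_{k_{\ell-1}+1}\leq q_{k_{\ell-1}}^{\sigma}$, so $\log\Sigma_{\en_\ell}/\log\Sigma_{\al_\ell}\geq 1+1/\sigma$ and each such block contributes at least the positive constant $\log(1+1/\sigma)$.

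In the slow case, $\Sigma_{\al_\ell}\asymp\Sigma_{\en_{\ell-1}}$, so the contributions almost telescope:
\[
\sum_{\ell\leq M}\log\frac{\log\Sigma_{\en_\ell}}{\log\Sigma_{\al_\ell}}\;\sim\;\log\log\Sigma_{\en_M}-\log\log\Sigma_{\en_0}\;\to\;\infty,
\]
since $\Sigma_{\en_M}\geq q_{k_M}\to\infty$. The main obstacle will be to quantify the loss $\sum_\ell\bigl[\log\log\Sigma_{\al_\ell}-\log\log\Sigma_{\en_{\ell-1}}\bigr]$ incurred when slow and fast blocks are interleaved: the non-Liouville ceiling $q_{n+1}\leq q_n^\sigma$ keeps $L_{\al_\ell}/\Sigma_{\en_{\ell-1}}$ bounded, so each loss term is $O(1/\log\Sigma_{\en_{\ell-1}})$ and the total loss remains asymptotically negligible compared with $\log\log\Sigma_{\en_M}$, preventing the telescoping main term from being cancelled.
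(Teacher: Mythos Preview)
Your fast-block estimate is fine, but the slow-case telescoping has a genuine gap. The claim that the non-Liouville ceiling keeps $L_{\al_\ell}/\Sigma_{\en_{\ell-1}}$ bounded is false, and with it both assertions ``$\Sigma_{\al_\ell}\asymp\Sigma_{\en_{\ell-1}}$ in the slow case'' and ``each loss term is $O(1/\log\Sigma_{\en_{\ell-1}})$.'' Take $R=1$ and $x$ with $a_{2n}=1$, $a_{2n+1}=\lfloor q_{2n}^{0.9}\rfloor$, so that $q_{2n+1}\asymp q_{2n}^{1.9}$ and the Diophantine type is $1.9<\infty$. Here $k_m=2m$, and every block is slow since $q_{k_{\ell-1}+1}\asymp q_{k_{\ell-1}}^{1.9}<q_{k_{\ell-1}}^2$. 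By Lemma~\ref{lem:BmRsum} (the case $\Delta k=2$, $a_{k_\ell}=1$) one has $\Sigma_{\en_{\ell-1}}\asymp q_{k_{\ell-1}}q_{k_{\ell-2}}\asymp q_{k_{\ell-2}}^{2.9}$, while $L_{\al_\ell}\asymp q_{k_{\ell-1}}^{1.9}\asymp q_{k_{\ell-2}}^{3.61}$, so $L_{\al_\ell}/\Sigma_{\en_{\ell-1}}\asymp q_{k_{\ell-2}}^{0.71}\to\infty$. In this example the loss at each block is $\log(\log\Sigma_{\al_\ell}/\log\Sigma_{\en_{\ell-1}})\to\log(3.61/2.9)\approx 0.22$, while the telescoped gain per block is $\log(\log\Sigma_{\en_\ell}/\log\Sigma_{\en_{\ell-1}})\to\log 1.9\approx 0.64$; so the total loss is a fixed positive fraction of $\log\log\Sigma_{\en_M}$, not asymptotically negligible. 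Your argument as written does not rule out the loss swallowing the entire main term.

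The paper sidesteps this accounting with a different dichotomy. Either $x$ has the positive density property (Definition~\ref{def:pdp}), in which case Lemma~\ref{lem:lad} gives $A(x,R)$ positive lower asymptotic density and the subseries of $\sum(q\log q)^{-1}$ over it diverges automatically; or else $\limsup_m L_{\al_m}/\Sigma_{\al_m}=1$, which hands you infinitely many $m_j$ with $\Sigma_{\al_{m_j}}\sim L_{\al_{m_j}}\asymp q_{k_{m_j-1}+1}$. For those blocks your own fast-case computation goes through verbatim and yields $\log\Sigma_{\en_{m_j}}/\log\Sigma_{\al_{m_j}}\gtrsim 1+1/\sigma$, so each contributes at least a fixed $\delta>0$. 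The point is that the condition ``$L_{\al_m}\sim\Sigma_{\al_m}$'' is exactly what makes the block estimate work, and it is \emph{guaranteed} infinitely often precisely when the density route fails---whereas your threshold $q_{k_{\ell-1}+1}>q_{k_{\ell-1}}^2$ need not ever occur.
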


\begin{proof}[\textbf{Proof}]
  If $x$ has the positive density property, then there is some $R\geq
  1$ for which $A(x,R)$ has positive lower asymptotic density, by
  Lemma~\ref{lem:lad}. This implies that~\eqref{eqn:subseries}
  diverges.

  On the other hand, if $x$ does not satisfy the positive density
  property, this means that
  \[
  \limsup_{m\to\infty}\frac{L_{\al_m}^R}{\Sigma_{\al_m}^R} = 1
  \]
  no matter which $R\geq 1$ we choose. Therefore, after fixing some
  $R\geq 1$, there is some sequence $\{m_j\}\subseteq\mathbb{N}$ where
  the limit superior is achieved, which means that on this sequence we
  have $L_{\al_{m_j}} \sim \Sigma_{\al_{m_j}}$. The partial sums
  of~\eqref{eqn:subseries} are then bounded by
  \[
  \sum_{q=\Sigma_{\al_m}}^{\Sigma_{\en_{m}}-1}\psi(q)^d \geq
  \int_{\Sigma_{\al_m}}^{\Sigma_{\en_{m}}}\frac{1}{q\log q} =
  \log\frac{\log \Sigma_{\en_{m}}}{\log \Sigma_{\al_m}},
  \]
  but we have
  \[
  \frac{\log \Sigma_{\en_{m_j}}}{\log \Sigma_{\al_{m_j}}} \sim
  \frac{\log \Sigma_{\en_{m_j}}}{\log L_{\al_{m_j}}}
  \overset{\textrm{Lem.~\ref{lem:BmRsum}}}{\gtrsim} \frac{\log
    q_{k_{m_j}-1}q_{k_{m_j -1}}}{\log q_{k_{m_j-1}+1}} \geq 1 +
  \frac{1}{\sigma} \quad\textrm{as}\quad j\to\infty,
  \]
  where $\sigma\in [1, \infty)$ is such that $x\notin\mathcal
  W_1(\sigma)$. This implies that there is some $\delta>0$ such that
  \[
  \sum_{q=\Sigma_{\al_m}}^{\Sigma_{\en_{m}}-1}\psi(q)^d \geq \delta
  \]
  infinitely often. (We can take any $\delta< \log \left(1 +
    \frac{1}{\sigma}\right)$.) Hence~\eqref{eqn:subseries} diverges.
\end{proof}

\begin{proof}[\textbf{Proof of Theorem~\ref{thm:prototype}}]
  We can now apply the divergence part of Gallagher's Theorem to any
  fiber over a non-Liouville base point. The fibers over Liouville
  base points are covered by Khintchine's transference principle,
  after the remark at the end of~\S\ref{sec:prototypes}.
\end{proof}

The next two lemmas combine to form Theorem~\ref{thm:prototypesgen},
which is more general than Theorem~\ref{thm:prototypes}.

\begin{lemma}\label{lem:fastk}
  If $x\in\mathbb{R}\backslash\mathbb{Q}$ is not Liouville and there
  is some $\eps>0$ and $R\geq 1$ such that $\frac{\log^{s-1} \Delta
    k_m}{\log^{s-1} k_m} \geq {1+\eps}$ on a sequence of $m$'s,
  then~\eqref{eqn:subseries} diverges for the approximating function
  $\psi_{s,d}$.
\end{lemma}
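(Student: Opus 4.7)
The plan is to reduce, via Lemma~\ref{lem:subseries}, to showing that $\sum_{q \in A(x,R)} \psi_{s,d}(q)^d$ diverges for the $R$ given by the hypothesis. The strategy is to bound each block sum below and show that the sum stays bounded away from zero infinitely often. Since $\psi_{s,d}$ is decreasing and $\int \frac{dq}{q \log q \cdots \log^{(s)} q} = \log^{(s+1)} q$, integral comparison on the block $A_m = [\Sigma_{\al_m}, \Sigma_{\en_m}-1]$ gives
\[
\sum_{q=\Sigma_{\al_m}}^{\Sigma_{\en_m}-1}\psi_{s,d}(q)^d \; \gtrsim \; \log^{(s+1)}\Sigma_{\en_m} - \log^{(s+1)}\Sigma_{\al_m} \; = \; \log \frac{\log^{(s)} \Sigma_{\en_m}}{\log^{(s)} \Sigma_{\al_m}},
\]
so the whole problem collapses to producing a constant $\delta>0$ for which $\log^{(s)}\Sigma_{\en_m}/\log^{(s)}\Sigma_{\al_m} \geq 1+\delta$ on the hypothesised subsequence.

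For the bounds on the endpoints of $A_m$, I would use Lemma~\ref{lem:BmRsum} together with the trivial $q_{k_{m-1}} \leq q_{k_{m-1}+1}$ to get $\Sigma_{\en_m} \gg q_{k_m}q_{k_m-1}$ and $\Sigma_{\al_m} \leq \Sigma_{\en_{m-1}} + L_{\al_m} \ll q_{k_{m-1}+1}^2$. The universal Fibonacci estimate $q_n \geq F(n+1)$ gives $\log q_{k_m} \gtrsim k_m$, and since $x$ is not Liouville we may fix $\sigma\in[1,\infty)$ with $x\notin\mathcal{W}_1(\sigma)$ and use Lemma~\ref{lem:qbound} to obtain $\log q_{k_{m-1}+1} \ll \sigma^{m-1}k_{m-1}$. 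Thus, up to constants that disappear under $\log^{(s-1)}$, the quantity to estimate is
\[
\frac{\log^{(s-1)}(\log \Sigma_{\en_m})}{\log^{(s-1)}(\log \Sigma_{\al_m})} \; \gtrsim \; \frac{\log^{(s-1)} k_m}{\log^{(s-1)}\!\bigl(\sigma^{m-1}k_{m-1}\bigr)}.
\]

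Finally I would invoke the hypothesis. Since $k_{m+1} \geq \Delta k_m$, the assumption $\log^{(s-1)}\Delta k_m \geq (1+\eps)\log^{(s-1)} k_m$ upgrades to $\log^{(s-1)} k_{m+1} \geq (1+\eps)\log^{(s-1)} k_m$ along the subsequence, so $\log^{(s-1)} k_m$ grows geometrically and therefore $\log k_{m-1}$ grows tower-exponentially in $m$. For $s\geq 2$ this tower growth dominates the linear-in-$m$ contribution $(m-1)\log\sigma$: $\log(\sigma^{m-1}k_{m-1}) = (m-1)\log\sigma+\log k_{m-1} \sim \log k_{m-1}$, and applying $s-2$ more logarithms preserves the asymptotic identity, yielding $\log^{(s-1)}(\sigma^{m-1}k_{m-1}) \sim \log^{(s-1)} k_{m-1}$. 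Combining gives the ratio $\geq 1+\eps/2$ for large $m$ on the subsequence, so each such block sum is at least $\log(1+\eps/2)$ and the series diverges. (For $s=1$ the factor $\sigma^{m-1}$ cannot be absorbed by this method, but in that case Lemma~\ref{lem:prototype} already gives the conclusion for any non-Liouville $x$, so one reduces to $s\geq 2$.) The main obstacle is exactly this last step: verifying that the hypothesis forces $k_m$ to grow fast enough along the subsequence that the non-Liouville factor $\sigma^{m-1}$ from Lemma~\ref{lem:qbound} gets swallowed after $s-1$ iterated logarithms without collapsing the gain $1+\eps$.
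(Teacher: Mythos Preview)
Your approach is the paper's: integral comparison on blocks, Lemma~\ref{lem:BmRsum} for the endpoint bounds, Lemma~\ref{lem:fibonacci} for the numerator, Lemma~\ref{lem:qbound} for the denominator, and then the hypothesis. The paper runs the chain as
\[
\frac{\log^s\Sigma_{\en_{m+1}}}{\log^s\Sigma_{\al_{m+1}}}\;\gtrsim\;\frac{\log^{s-1}(\Delta k_m+\log q_{k_m+1})}{\log^{s-1}k_m}\;\gtrsim\;1+\eps
\]
and does not separately carve out $s=1$.

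Your absorption of the factor $\sigma^{m-1}$ has a gap. From the hypothesis and $k_{m+1}\geq\Delta k_m$ you correctly get $\log^{s-1}k_{m_j+1}\geq(1+\eps)\log^{s-1}k_{m_j}$ along the subsequence $\{m_j\}$, and by monotonicity of $k$ this iterates to geometric growth of $\log^{s-1}k_{m_j}$ in the \emph{subsequence index} $j$. But to dominate the term $(m_j-1)\log\sigma$ you need growth in $m_j$ itself. If the subsequence is sparse, say $m_j=2^j$, then $\log^{s-1}k_{m_j}\gtrsim(1+\eps)^j=(1+\eps)^{\log_2 m_j}$, which is only polynomial in $m_j$ and need not dominate $m_j$; your claim that ``$\log k_{m-1}$ grows tower-exponentially in $m$'' conflates $j$ with $m_j$. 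The paper glosses over exactly this step (it writes $\log^{s-1}k_m$ in the denominator where Lemma~\ref{lem:qbound} honestly yields $\log^{s-1}(\sigma^m k_m)$), so you have been more scrupulous than the paper here --- but your resolution is not complete.
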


\begin{proof}[\textbf{Proof}]
  Comparing sums to integrals we have
  \[
  \sum_{q \in A_{m+1}} \psi_{s,d}(q)
  \geq \log\left(\frac{\log^s \Sigma_{\en_{m+1}}}{\log^s
      \Sigma_{\al_{m+1}}}\right)
  \]
  and we will show that this expression is bounded below by
  $\log(1+\eps)$ on the sequence where $\frac{\log^{s-1} \Delta
    k_m}{\log^{s-1} k_m} \geq {1+\eps}$.

  On this sequence, we have
  \begin{multline*}
    \frac{\log^s\Sigma_{\en_{m+1}}}{\log^s \Sigma_{\al_{m+1}}}
    \overset{\textrm{Lem.~\ref{lem:BmRsum}}}{\gtrsim} \frac{\log^s
      q_{k_{m+1}}^2}{\log^s \max\left\{q_{k_m}q_{k_m-1},
        q_{k_m+1}\right\}}
    \overset{\textrm{Lem.~\ref{lem:fibonacci}}}{\gtrsim} \frac{\log^s \left(F(\Delta k_m)\, q_{k_m+1}\right)}{\log^s q_{k_m+1}} \\
    \overset{\textrm{Lem.~\ref{lem:qbound}}}{\gtrsim} \frac{\log^s
      \left(F(\Delta k_m)\, q_{k_m+1}\right)}{\log^s
      \left(R+1\right)^{\sigma^m k_m} }
    \gtrsim \frac{\log^{s-1} \left(\Delta k_m + \log
        q_{k_m+1}\right)}{\log^{s-1} k_m} \gtrsim 1 + \eps.
  \end{multline*}
  Therefore~\eqref{eqn:subseries} diverges.
\end{proof}

\begin{lemma}\label{lem:prototypes}
  If $x\in\mathbb{R}\backslash\mathbb{Q}$ is not Liouville, has
  essential Diophantine type greater than $1$, and $\Delta k_m \leq^*
  k_m$ for some $R\geq 1$, then there is a positive lower asymptotic
  density sequence $\{\ell_j\}\subseteq\mathbb{N}$ on which the
  comparison
  \[
  \int_{\Sigma_{\al_\ell}}^{\Sigma_{\en_\ell}} \psi_{s,d}(t)^d\, dt
  \gg \psi_{s-2,d}(\ell)^d
  \]
  holds, where $\psi_{s,d}(q) = (q\log q \log^2 q \dots \log^s
  q)^{-1/d}$. Therefore,~\eqref{eqn:subseries} diverges.
\end{lemma}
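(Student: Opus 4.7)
The plan is to produce a positive lower asymptotic density sequence $\{\ell_j\} \subseteq \mathbb{N}$ along which the block integral is bounded below by $\psi_{s-2,d}(\ell_j)^d$, and then sum along it to conclude divergence of~\eqref{eqn:subseries}.

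First, I extract the sequence from the essential Diophantine type hypothesis: choose $\sigma_1 > 1$ and $R \geq 1$ (matched, after enlarging if necessary, to the $R$ in $\Delta k_m \leq^* k_m$) together with a positive lower asymptotic density sequence $\{m_j\}$ satisfying $q_{k_{m_j}}^{\sigma_1} < q_{k_{m_j}+1}$. Set $\ell_j := m_j + 1$. The block $A_{\ell_j}$ begins with the large jump $L_{\al_{\ell_j}} = q_{k_{m_j}+1} - R q_{k_{m_j}} \gg q_{k_{m_j}+1}$, and by Lemma~\ref{lem:BmRsum}, $\Sigma_{\en_{\ell_j}} \gg q_{k_{\ell_j}}^2 \geq q_{k_{m_j}+1}^2$. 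The inequality $q_{k_{m_j}} < q_{k_{m_j}+1}^{1/\sigma_1}$ forces $\log \Sigma_{\al_{\ell_j}} \leq (2/\sigma_1) \log q_{k_{m_j}+1} + C$ for some constant $C$, so eventually $\log \Sigma_{\en_{\ell_j}} \geq (1+\delta) \log \Sigma_{\al_{\ell_j}}$ for some $\delta > 0$ depending only on $\sigma_1$.

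Next, I bound the integral. Since $\psi_{s,d}(t)^d = 1/(t \log t \cdots \log^s t)$ has antiderivative $\log^{s+1}(t)$, setting $u := \log \Sigma_{\al_{\ell_j}}$ and $v := \log \Sigma_{\en_{\ell_j}} \geq (1+\delta) u$, the integral equals $\log^s v - \log^s u$. Because $(\log^s)'(t) = 1/(t \log t \cdots \log^{s-1}(t))$ is decreasing, the mean value theorem together with $(v-u)/v \geq \delta/(1+\delta)$ yields
\[
\log^s v - \log^s u \geq (v-u)(\log^s)'(v) \gg \frac{1}{\log v \cdots \log^{s-1} v} = \frac{1}{\log^2 \Sigma_{\en_{\ell_j}} \cdots \log^s \Sigma_{\en_{\ell_j}}}.
\]
The non-Liouville hypothesis now supplies $\sigma_2 < \infty$ with $x \notin \mathcal W_1(\sigma_2)$; Lemma~\ref{lem:qbound} gives $q_{k_m+1} \leq^* (R+1)^{\sigma_2^m k_m}$, and $\Delta k_m \leq^* k_m$ forces $k_m \ll 2^m$. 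Hence $\log \Sigma_{\en_{\ell_j}} \ll (2\sigma_2)^{m_j}$, which is doubly exponential in $m_j$. Two applications of $\log$ collapse this to $\log^2 \Sigma_{\en_{\ell_j}} \ll m_j \asymp \ell_j$, and iterating, $\log^{k+1} \Sigma_{\en_{\ell_j}} \ll \log^{k-1} \ell_j$ for each $k \geq 2$. Multiplying these bounds, the denominator above is $\ll \ell_j \log \ell_j \cdots \log^{s-2}(\ell_j)$, so the integral is $\gg \psi_{s-2,d}(\ell_j)^d$.

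The positive lower asymptotic density of $\{\ell_j\}$ gives $\ell_j \ll j$, hence $\sum_j \psi_{s-2,d}(\ell_j)^d \gg \sum_j 1/(j \log j \cdots \log^{s-2}(j)) = \infty$. Monotonicity of $\psi_{s,d}$ transfers the integral bound to $\sum_{q \in A(x,R)} \psi_{s,d}(q)^d$, establishing~\eqref{eqn:subseries}. The main technical obstacle is the iterated-logarithm bookkeeping in the third paragraph: it is precisely the fact that two applications of $\log$ reduce the doubly-exponential-in-$m_j$ upper bound on $\Sigma_{\en_{\ell_j}}$ to linear-in-$\ell_j$ growth that pins the right-hand side to $\psi_{s-2,d}$, rather than to $\psi_{s-1,d}$ or $\psi_{s-3,d}$.
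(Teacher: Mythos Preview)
Your overall strategy matches the paper's: pick $\ell_j = m_j+1$ from the essential Diophantine type hypothesis, establish $\log\Sigma_{\en_{\ell_j}} \geq (1+\delta)\log\Sigma_{\al_{\ell_j}}$, then propagate through iterated logarithms. Your mean-value-theorem device for the propagation step is a clean alternative to the paper's induction, and that part, together with the iterated-log bookkeeping $\log^2\Sigma_{\en_{\ell_j}} \ll \ell_j$, is correct.

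The gap is in your first paragraph. Lemma~\ref{lem:BmRsum} does \emph{not} give $\Sigma_{\en_{\ell_j}} \gg q_{k_{\ell_j}}^2$; in general it only yields $\Sigma_{B_{\ell_j}} \gg q_{k_{\ell_j}}q_{k_{\ell_j-1}} = q_{k_{m_j+1}}q_{k_{m_j}}$, and the stronger square bound requires the side condition $q_{k_{\ell_j}} \geq 2q_{k_{\ell_j-1}+1}$, which you have not secured along $\{\ell_j\}$. With only the correct lower bound $\log\Sigma_{\en_{\ell_j}} \gtrsim \log(q_{k_{m_j}+1}q_{k_{m_j}})$ your single inequality does not close: taking $\sigma_1 \in (1,2]$ (so that your upper bound $\log\Sigma_{\al_{\ell_j}} \leq (2/\sigma_1)\log q_{k_{m_j}+1}+C$ is indeed valid) and using the non-Liouville bound $\log q_{k_{m_j}} \geq (1/\sigma_2)\log q_{k_{m_j}+1}$, the ratio of logs is only $\gtrsim \sigma_1(\sigma_2+1)/(2\sigma_2)$, which drops below $1$ whenever the essential type is close to $1$ and the Diophantine type $\sigma_2$ is large.

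The paper resolves this with a case split on whether $q_{k_{m_j}}q_{k_{m_j}-1} \leq q_{k_{m_j}+1}$: in one case the non-Liouville exponent delivers a log-ratio $\geq 1 + 1/\sigma$, in the other the essential-type exponent delivers $\geq (1+\tilde\sigma)/2$. You need an analogous dichotomy; once that is inserted, the remainder of your argument goes through as written.
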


\begin{proof}[\textbf{Proof}]
  Let $1<\tilde\sigma <\sigma < \infty$ be such that $x\in\mathcal
  W_1^{\mathrm{ess}}(\tilde\sigma)\backslash\mathcal W_1(\sigma)$. We
  first show that
  \begin{equation}\label{eqn:basecase}
    \int_{\Sigma_{\al_\ell}}^{\Sigma_{\en_\ell}} \psi_{1,d}(t)^d\, dt = \log \left(\frac{\log\Sigma_{\en_\ell}}{\log\Sigma_{\al_\ell}}\right) \gg 1
  \end{equation}
  holds on a sequence $\{\ell_j\}\subseteq\mathbb{N}$ of positive
  lower asymptotic density, by showing that there is some $\eps>0$
  such that $\frac{\log\Sigma_{\en_\ell}}{\log\Sigma_{\al_\ell}}
  \gtrsim 1+\eps$ on a sequence of $\ell$'s of positive lower
  asymptotic density.

  By Lemma~\ref{lem:BmRsum} we have the comparisons
  $\Sigma_{\en_{m+1}} \gg q_{k_{m+1}}q_{k_m}$ and $\Sigma_{\al_{m+1}}
  \ll q_{k_m}q_{k_m-1} + q_{k_m+1}$, and because $x\in\mathcal
  W_1^{\mathrm{ess}}(\tilde\sigma)$ there is a sequence
  $\{m_j\}\subseteq\mathbb{N}$ of positive lower asymptotic density
  such that $q_{k_{m_j}}^{\tilde\sigma}< q_{k_{m_j+1}}$. We now have
  \[
  \frac{\log\Sigma_{\en_{m_j+1}}}{\log\Sigma_{\al_{m_j+1}}} \gtrsim
  \frac{\log q_{k_{m_j+1}}q_{k_{m_j}}}{\log
    \left(q_{k_{m_j}}q_{k_{m_j}-1} + q_{k_{m_j}+1}\right)} \gtrsim
  \frac{\log q_{k_{m_j+1}}q_{k_{m_j}}}{\log
    \max\left\{q_{k_{m_j}}q_{k_{m_j}-1}, q_{k_{m_j}+1}\right\}}.
  \]
  Whenever $q_{k_{m_j}}q_{k_{m_j}-1} \leq q_{k_{m_j}+1}$, this becomes
  \begin{equation*}
    \frac{\log q_{k_{m_j+1}}q_{k_{m_j}}}{\log q_{k_{m_j}+1}} = \frac{\log q_{k_{m_j+1}}}{\log q_{k_{m_j}+1}} + \frac{\log q_{k_{m_j}}}{\log q_{k_{m_j}+1}} \geq 1 + \frac{1}{\sigma}.
  \end{equation*}
  And whenever $q_{k_{m_j}}q_{k_{m_j}-1} \geq q_{k_{m_j}+1}$, we get
  \begin{equation*}
    \frac{\log q_{k_{m_j+1}}q_{k_{m_j}}}{\log q_{k_{m_j}}q_{k_{m_j}-1}} > \frac{\log q_{k_{m_j}}^{1+\tilde\sigma}}{\log q_{k_{m_j}}^2} = \frac{1+\tilde\sigma}{2} > 1
  \end{equation*}
  because $\tilde\sigma >1$. The sequence $\{\ell_j\}$ in the previous
  paragraph is $\ell_j = m_j +1$, and we have proved
  \[
  \frac{\log\Sigma_{\en_{\ell_j}}}{\log\Sigma_{\al_{\ell_j}}} \gtrsim
  1+\eps
  \]
  with any fixed
  \[
  0<\eps < \min\left\{\frac{1}{\sigma},
    \frac{\tilde\sigma-1}{2}\right\},
  \]
  and this establishes the comparison~\eqref{eqn:basecase}.

  We now show that
  \begin{equation}\label{eqn:secondbase}
    \int_{\Sigma_{\al_{\ell_j}}}^{\Sigma_{\en_{\ell_j}}} \psi_{2,d}(t)^d\, dt \gg \psi_{0, d}(\ell_j).
  \end{equation}
  Evaluating the integral gives
  \[
  \int_{\Sigma_{\al_{\ell_j}}}^{\Sigma_{\en_{\ell_j}}}
  \psi_{2,d}(t)^d\, d = \log \left(\frac{\log\log
      \Sigma_{\en_{\ell_j}}}{\log\log\Sigma_{\al_{\ell_j}}}\right)
  \geq \log \left(1+
    \frac{\log(1+\eps)}{\log\log\Sigma_{\al_{\ell_j}}}\right).
  \]
  Lemma~\ref{lem:qbound} and the assumption that $\Delta k_m \leq^*
  k_m$ imply
  \[
  \log\log\Sigma_{\al_{\ell_j}} \ll \ell_j,
  \]
  and recalling the fact that $\log\left(1+t\right) \sim t$ as $t\to
  0$, we have~\eqref{eqn:secondbase}.

  In the general case, we claim that for all $s\in\mathbb{N}$,
  \[
  \frac{\log^s \Sigma_{\en_{\ell_j}}}{\log^s\Sigma_{\al_{\ell_j}}}
  \gtrsim 1+ \frac{\log(1+
    \eps)}{\log^s\Sigma_{\al_{\ell_j}}\log^{s-1}\Sigma_{\al_{\ell_j}}\dots
    \log^2\Sigma_{\al_{\ell_j}}} = 1+ C_s \psi_{s-2,d}(\ell_j)^d
  \]
  where $C_s>0$. We have already proved the base case. In the
  inductive step,
  \begin{align*}
    \frac{\log^s \Sigma_{\en_{\ell_j}}}{\log^s \Sigma_{\al_{\ell_j}}} &\gtrsim 1 + \frac{\log (1+ C_{s-1}\psi_{s-3,d}(\ell_j)^d)}{\log^s \Sigma_{\al_{\ell_j}}} \\
    &\sim 1 + \frac{C_{s-1}\psi_{s-3,d}(\ell_j)^d}{\log^s \Sigma_{\al_{\ell_j}}} \\
    &= 1 + C_s \psi_{s-2,d}(\ell_j)^d,
  \end{align*}
  proving the claim. Evaluating the intergral,
  \begin{multline*}
    \int_{\Sigma_{\al_{\ell_j}}}^{\Sigma_{\en_{\ell_j}}}
    \psi_{s,d}(t)^d\, d = \log \left(\frac{\log^s
        \Sigma_{\en_{\ell_j}}}{\log^s\Sigma_{\al_{\ell_j}}}\right) \\
    \gtrsim \log \left(1+ C_s \psi_{s-2,d}(\ell_j)^d\right) \sim C_s
    \psi_{s-2,d}(\ell_j)^d \gg \psi_{s-2,d}(\ell_j)^d,
  \end{multline*}
  we have proved the lemma.
\end{proof}

\begin{theorem}\label{thm:prototypesgen}
  Let $x\in\mathbb{R}\backslash\mathbb{Q}$ be non-Liouville.
  \begin{itemize}
  \item[\textbf{(a)}] If for some $\eps>0$ there is an
    $s\in\mathbb{N}$ such that $\frac{\log^{s-1}\Delta k_m}{\log^{s-1}
      k_m} \geq 1+\eps$ for infinitely many $m\in\mathbb{N}$; or,
  \item[\textbf{(b)}] If the essential Diophantine type of $x$ is
    greater than $1$ and $\Delta k_m \leq^* k_m$,
  \end{itemize}
  then
  \[
  m_{d-1}\left(\mathcal{W}_{d}(\psi_{s,d})\cap
    \left(\{x\}\times\mathbb{R}^{d-1}\right)\right)=\textsc{full}.
  \]
\end{theorem}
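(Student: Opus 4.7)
My plan is to follow the general blueprint outlined in \S\ref{sec:onproofs}: translate the question on the hyperplane into a Gallagher-style divergence test, and then invoke Lemma~\ref{lem:subseries} to reduce to divergence on the block set $A(x,R)$, where the heavy lifting has already been carried out in Lemmas~\ref{lem:fastk} and~\ref{lem:prototypes}.

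First I would set $\psi = \psi_{s,d}$ and define the auxiliary non-monotonic function $\bar\psi$ as in \S\ref{sec:onproofs}. Because $x$ is fixed and $d-1 \geq 2$, Gallagher's Theorem applied to $\bar\psi$ on the hyperplane $\{x\}\times\mathbb{R}^{d-1}$ reduces the desired conclusion
\[
m_{d-1}\left(\mathcal{W}_d(\psi)\cap \left(\{x\}\times\mathbb{R}^{d-1}\right)\right) = \textsc{full}
\]
to divergence of
\[
\sum_{q=1}^\infty \bar\psi(q)^{d-1} \;=\; \sum_{q\in\mathcal{Q}(x,\psi)}\psi(q)^{d-1},
\]
which is precisely the sum~\eqref{eqn:gallaghersum}. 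By Lemma~\ref{lem:subseries} it therefore suffices to exhibit some $R\geq 1$ for which the subseries $\sum_{q\in A(x,R)}\psi(q)^d$, namely~\eqref{eqn:subseries}, diverges.

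In case~(a), this is exactly the conclusion of Lemma~\ref{lem:fastk}, whose hypothesis is the stated lower bound on $\log^{s-1}\Delta k_m / \log^{s-1} k_m$; the divergence of~\eqref{eqn:subseries} follows for any fixed $R\geq 1$. In case~(b), the assumption that the essential Diophantine type of $x$ exceeds $1$, combined with $\Delta k_m \leq^* k_m$ (for some $R\geq 1$), is precisely the hypothesis of Lemma~\ref{lem:prototypes}, which also yields divergence of~\eqref{eqn:subseries}. In both cases we then chain Lemma~\ref{lem:subseries} and Gallagher's Theorem to conclude.

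The real analytical content has been front-loaded into Lemma~\ref{lem:prototypes}, where one runs an induction on $s$ comparing $\log^s\Sigma_{\en_\ell}/\log^s\Sigma_{\al_\ell}$ with $\psi_{s-2,d}(\ell)^d$ along a positive-density subsequence; once those estimates are granted, the present theorem is largely bookkeeping. The only subtle point I would double-check is the quantifier on $R$: Lemmas~\ref{lem:fastk},~\ref{lem:prototypes}, and~\ref{lem:subseries} must all be applied with a \emph{single} fixed value of $R$, but this is automatic since each of those lemmas is quantified over arbitrary $R\geq 1$ and we may simply take the $R$ supplied by the hypothesis in~(a) or~(b).
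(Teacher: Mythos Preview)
Your proposal is correct and follows exactly the same route as the paper's own proof, which simply records that part~(a) follows from Lemma~\ref{lem:fastk} and part~(b) from Lemma~\ref{lem:prototypes}, each combined with Lemma~\ref{lem:subseries} and Gallagher's Theorem. Your elaboration of the reduction via $\bar\psi$ and the sum~\eqref{eqn:gallaghersum}, together with the observation about fixing a single $R$, merely unpacks what the paper leaves implicit.
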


\begin{proof}[\textbf{Proof}]
  Part (a) follows from Lemma~\ref{lem:fastk} and part (b) follows
  from Lemma~\ref{lem:prototypes}, both after applying
  Lemma~\ref{lem:subseries} and Gallagher's Theorem.
\end{proof}

\begin{proof}[\textbf{Proof of Theorem~\ref{thm:prototypes}}]
  Any $x\in\mathbb{R}\backslash\mathbb{Q}$ that is not Liouville and
  has regular Diophantine type greater than $1$ satisfies part (b) of
  Theorem~\ref{thm:prototypesgen}. For any $x$ of Diophantine type
  greater than $d$, the theorem is proved by the remark on
  Khintchine's transference principle at the end
  of~\S\ref{sec:prototypes}.
\end{proof}

\subsection{Another point of view}

Before moving our attention to the convergence results, we would like
to offer another point of view of what we have done
in~\S\ref{sec:proofsdiv}.

Notice that power set $\mathcal{P}(\mathbb{N})$ surjects onto $[0, 1]$
by mapping a subset $A\subseteq\mathbb{N}$ to the binary expansion
$0.d_1 d_2 d_3\dots$, where $d_q = \mathbf{1}_{A}(q)$ is the indicator
of $A$. In fact, the set $\mathcal{P}^\infty(\mathbb{N})$ of
\emph{infinite} subsets of $\mathbb{N}$ can be identified with $(0,1]$
by considering only binary expansions with infinitely many $1$'s. With
this identification $\mathcal{P}^\infty(\mathbb{N})\cong (0,1]$ in
mind, let us denote
\[
\mathcal{C}(\psi) := \left\{ A\in\mathcal{P}^\infty (\mathbb{N}) :
  \sum_{q\in A}\psi(q) < \infty \right\} \subset (0,1]
\]
and
\[
\mathcal{D}(\psi) := \left\{ A\in\mathcal{P}^\infty (\mathbb{N}) :
  \sum_{q\in A}\psi(q) = \infty \right\} =
(0,1]\backslash\mathcal{C}(\psi)
\]
to be the sets of convergent and divergent subseries of
$\sum_{q\in\mathbb{N}}\psi(q)$, respectively. {\v S}al{\'a}t offers
the following theorem.

\begin{theorem}[\cite{Salat}]\label{thm:salat}
  If $\psi:\mathbb{N}\to\mathbb{R}$ is non-increasing and
  $\sum_{q\in\mathbb{N}} \psi(q)$ diverges, then
  $\mathcal{C}(\psi)\subset (0,1]$ has Hausdorff dimension $0$.
\end{theorem}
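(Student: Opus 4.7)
The plan is to show $\mathcal{H}^s(\mathcal{C}(\psi)) = 0$ for every $s > 0$, which forces Hausdorff dimension zero. Since $\mathcal{C}(\psi) = \bigcup_{M \in \mathbb{N}} \mathcal{C}_M(\psi)$ with $\mathcal{C}_M(\psi) := \{A : \sum_{q \in A}\psi(q) \leq M\}$, countable subadditivity of Hausdorff measure reduces matters to showing each $\mathcal{C}_M(\psi)$ has Hausdorff dimension zero. As a preliminary reduction, if $\psi(q) \not\to 0$ then monotonicity forces $\psi \geq c > 0$, so every $A \in \mathcal{C}(\psi)$ is finite and $\mathcal{C}(\psi)$ is countable; thus assume $\psi(q) \to 0$.

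Fix $M$ and $s > 0$. The base strategy is to cover $\mathcal{C}_M(\psi)$ at scale $2^{-N}$ by the dyadic intervals indexed by admissible prefixes
\[
\mathcal{F}_N := \bigl\{T \subseteq [1,N] : \textstyle\sum_{q \in T} \psi(q) \leq M\bigr\},
\]
so that the $s$-dimensional Hausdorff content at scale $2^{-N}$ is at most $|\mathcal{F}_N|\,2^{-Ns}$. Monotonicity of $\psi$ gives the pigeonhole constraint $|T|\,\psi(N) \leq \sum_{q = N-|T|+1}^N \psi(q) \leq M$, so $|T| \leq M/\psi(N)$ and $|\mathcal{F}_N| \leq \sum_{k \leq M/\psi(N)}\binom{N}{k}$. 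When $N\psi(N) \to \infty$, this count is $2^{o(N)}$ and $|\mathcal{F}_N|\,2^{-Ns} \to 0$ for every $s > 0$, closing the argument in this regime.

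The main obstacle is the borderline case $N\psi(N) \not\to \infty$, exemplified by $\psi(q) = 1/(q\log q)$, where the uniform single-scale cover can contain $2^{N(1 - o(1))}$ intervals (just take $T \subseteq [N^{1/e^M},N]$) and the above argument fails. To handle this regime I would replace the uniform cover by a hierarchical one adapted to the decay of $\psi$: choose a sequence $N_0 < N_1 < N_2 < \cdots$ with $\sum_{q = N_{k-1}+1}^{N_k}\psi(q) \asymp 1$ (possible because $\sum \psi$ diverges), and at stage $k$ refine each admissible prefix of length $N_{k-1}$ into its admissible extensions of length $N_k$, where the number of such extensions is controlled by the \emph{residual budget} $M - \sum_{q \in T \cap [1,N_{k-1}]}\psi(q)$ rather than by $M$ itself. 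The expected main technical obstacle is an iterative, Cauchy-condensation-style accounting showing that the total $s$-content accumulated across the resulting variable-size cover tends to zero; the divergence of $\sum \psi$ should drive the residual budget to be exhausted along the dominant branches of the refinement tree, and it is this exhaustion that ultimately forces Hausdorff dimension zero in the borderline regime.
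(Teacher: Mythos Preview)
The paper does not contain a proof of this statement: Theorem~\ref{thm:salat} is quoted from \v{S}al\'at's 1964 paper \cite{Salat} and is used only for context in the ``another point of view'' subsection, without any argument being supplied or sketched. There is therefore nothing in the paper against which to compare your attempt.

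On the substance of your proposal itself: the reduction to the sets $\mathcal{C}_M(\psi)$, the disposal of the case $\psi\not\to 0$, the covering by admissible prefixes $\mathcal{F}_N$, and the entropy-type bound $|\mathcal{F}_N|\le\sum_{k\le M/\psi(N)}\binom{N}{k}=2^{o(N)}$ when $N\psi(N)\to\infty$ are all sound and constitute a complete proof in that regime. The genuine gap is the borderline case $N\psi(N)\not\to\infty$ (your example $\psi(q)=1/(q\log q)$ is apt). There you only \emph{outline} a hierarchical refinement along a sequence $N_0<N_1<\cdots$ with blocks of $\psi$-mass $\asymp 1$, and you explicitly flag the ``iterative, Cauchy-condensation-style accounting'' as the remaining technical obstacle without carrying it out. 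As written, this part is a plan rather than a proof: you have not shown that the residual-budget bookkeeping actually forces the total $s$-content to vanish, and this is precisely where the work lies. If you want to complete the argument, one clean route is to show that for each fixed $s>0$ and large $k$ the number of admissible extensions of a prefix from $[1,N_{k-1}]$ to $[1,N_k]$ is at most $2^{(1-\delta)(N_k-N_{k-1})}$ for some $\delta=\delta(s)>0$ once the residual budget drops below a fixed fraction of the block mass, and then telescope; alternatively, consult \v{S}al\'at's original paper for his argument.
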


In~\S\ref{sec:proofsdiv} we have explicitly defined a map
$A:\mathbb{R}\backslash\mathbb{Q}\times\mathbb{N}\to (0,1]$ using our
bounded ratio sequences $\left\{L_n^R\right\}$, and we have spent our
effort showing that $A(x,R)\in\mathcal D (\psi^d)$ for as many $x$'s
as possible, where $\psi$ is some approximating function satisfying
$\sum_{q\in\mathbb{N}}\psi(q)^d = \infty$. Theorem~\ref{thm:salat}
says that this amounts to showing that the map $A$ takes values in a
set whose complement has Hausdorff dimension $0$.

It is tempting to hope that closer analysis of the properties of the
map $A$ will reveal that the preimage of $\mathcal C(\psi^d)$ must
also have Hausdorff dimension $0$. This would prove the following
statement:
\begin{quote}
  \emph{Let $d\geq 3$. If $\psi$ is an approximating function such
    that the sum $\sum_{q\in\mathbb{N}} \psi(q)^d$ diverges, then
    \[
    m_{d-1}\left(\mathcal{W}_{d}(\psi)\cap
      \left(\{x\}\times\mathbb{R}^{d-1}\right)\right)=\textsc{full}
    \]
    for all $x\in\mathbb{R}\backslash E$, where the (possibly empty)
    set $E$ of exceptions has Hausdorff dimension zero.}
\end{quote}
We can reasonably expect this to be true (even with an empty $E$). In
particular, we have already proved it for the prototypical $\psi(q) =
(q\log q)^{-1/d}$, in Theorem~\ref{thm:prototype}, and for
approximating functions with the property that any convergent
subseries of $\sum \psi(q)^d$ has asymptotic density zero, in
Theorem~\ref{thm:upper}.

\section{Proofs of convergence results}

\subsection{Proof of Theorem~\ref{thm:convergence}}
The following theorem is a counterpart to Theorem~\ref{thm:gapsumdiv},
and Theorem~\ref{thm:convergence} follows immediately.
\begin{theorem}\label{thm:gapsumconv}
  Let $d\geq 2$. If $\psi$ is an approximating function such that the
  sum $\sum_{q\in\mathbb{N}} \psi(q)^d$ converges, then
  \begin{equation*}
    \sum_{q\in \mathcal{Q}(x ,\psi)} \psi(q)^{d-1} < \infty
  \end{equation*}
  for:
  \begin{itemize}
  \item[\rm\textbf{(a)}] $\begin{dcases} \textrm{No $x \in\mathbb{Q}$}
      &\textrm{if $\sum_{q\in\mathbb{N}}\psi(q)^{d-1}$ diverges.} \\
      \textrm{Every $x\in\mathbb{R}$} &\textrm{if it
        converges.}\end{dcases}$
  \item[\rm\textbf{(b)}] Any $x\in\mathbb{R}\backslash\mathbb{Q}$ with
    the bounded ratio property.
  \end{itemize}
\end{theorem}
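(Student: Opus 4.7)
The plan is to handle parts (a) and (b) separately. Part (a) is essentially bookkeeping once one observes that for $x=a/b\in\mathbb{Q}$ with $\gcd(a,b)=1$, $\norm{qx}=0$ when $b\mid q$ and $\norm{qx}\geq 1/b$ otherwise. Since $\psi(q)\to 0$, the set $\mathcal{Q}(x,\psi)$ eventually coincides with $b\mathbb{N}$; monotonicity of $\psi$ then gives
\[
\sum_{q\in \mathcal{Q}(x,\psi)}\psi(q)^{d-1}\asymp \sum_{k\in\mathbb{N}}\psi(kb)^{d-1}\asymp \frac{1}{b}\sum_{q\in\mathbb{N}}\psi(q)^{d-1},
\]
which converges if and only if $\sum_q\psi(q)^{d-1}$ does, yielding both sub-cases of~(a). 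The ``every $x\in\mathbb{R}$'' statement when $\sum\psi^{d-1}$ converges is immediate because $\sum_{\mathcal{Q}(x,\psi)}\psi^{d-1}\leq \sum_q\psi^{d-1}$.

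For part (b), assume $x$ has the bounded ratio property with parameter $R\geq 1$, and let $\{L_n\}$, $\{\Sigma_n\}$ be the associated sequences from~\S\ref{sec:brp}. The first step is to partition the sum along these intervals and apply Lemma~\ref{lem:antischmidt}:
\[
\sum_{\substack{q=\Sigma_{n-1}\\ q\in\mathcal{Q}(x,\psi)}}^{\Sigma_n-1}\psi(q)^{d-1} \leq \psi(\Sigma_{n-1})^{d-1}\,\abs{\mathcal{Q}(x,\psi)\cap [\Sigma_{n-1},\Sigma_n-1]} \ll L_n\,\psi(\Sigma_{n-1})^d.
\]
Thus it suffices to prove $\sum_n L_n\,\psi(\Sigma_{n-1})^d<\infty$.

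The key step is the decomposition $L_n\,\psi(\Sigma_{n-1})^d = L_n\,\psi(\Sigma_n)^d + L_n(\psi(\Sigma_{n-1})^d - \psi(\Sigma_n)^d)$. Monotonicity gives $L_n\,\psi(\Sigma_n)^d \leq \sum_{q=\Sigma_{n-1}+1}^{\Sigma_n}\psi(q)^d$, whose total sum is at most $\sum_q\psi(q)^d<\infty$. For the telescoping piece, write $b_n:=\psi(\Sigma_n)^d$ and apply Abel summation:
\[
\sum_{n=1}^{N} L_n (b_{n-1}-b_n) = L_1\,b_0 + \sum_{n=1}^{N-1}(L_{n+1}-L_n)\,b_n - L_N b_N \leq L_1\,b_0 + \sum_{n\geq 1}(L_{n+1}-L_n)\,b_n,
\]
discarding the non-negative boundary term. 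For $n$ interior to a block, $L_{n+1}-L_n=1$, contributing $\sum_n\psi(\Sigma_n)^d\leq\sum_n\psi(n)^d<\infty$ (since $\Sigma_n\geq n$). For $n=\en_m$, the block-boundary jump $L_{n+1}-L_n = L_{\al_{m+1}}-L_{\en_m}\leq L_{\al_{m+1}}$ contributes at most $\sum_m L_{\al_{m+1}}\,\psi(\Sigma_{\en_m})^d$; by Lemma~\ref{lem:wlogconv}, $\psi(q)^d\ll 1/q$, so this is $\ll \sum_m L_{\al_{m+1}}/\Sigma_{\en_m}$, which is finite exactly by the definition of bounded ratio property.

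The main obstacle is this block-boundary term: at each jump between blocks, $L_n$ leaps upward, and the Riemann-sum-like quantity $\sum_n L_n\,\psi(\Sigma_{n-1})^d$ overshoots $\int \psi^d\,dt$ by precisely these contributions. The bounded ratio property is engineered to make that overshoot summable; everything else reduces to monotonicity and comparison with the convergent tail of $\sum\psi^d$.
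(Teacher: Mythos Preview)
Your proof is correct and follows essentially the same route as the paper: the same partition along $\{\Sigma_n\}$, the same appeal to Lemma~\ref{lem:antischmidt} to reduce to $\sum_n L_n\,\psi(\Sigma_{n-1})^d$, and the same use of Lemma~\ref{lem:wlogconv} together with the bounded ratio property to control the block-boundary jumps. The only cosmetic difference is in how the reduction of $\sum_n L_n\,\psi(\Sigma_{n-1})^d$ is organized: the paper shifts $L_n$ to $L_{n-1}$ (writing $L_n=L_{n-1}+1$ away from block starts, with an extra $(B_{m+1}-B_m)$ at $n=\al_{m+1}$) and then implicitly compares $\sum_n L_{n-1}\psi(\Sigma_{n-1})^d$ with $\sum_q\psi(q)^d$; you instead split off $L_n\psi(\Sigma_n)^d$ first and make the remaining telescoping explicit via Abel summation. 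These are the same computation rearranged, and your version is arguably the cleaner of the two.
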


\begin{proof}[\textbf{Proof}]
  Again, we treat the different parts of the theorem separately.

  \textbf{Part~(a):} Suppose $\sum_{q\in\mathbb{N}}\psi(q)^{d-1}$
  diverges. Let $x = a/b$ be a rational number. Then the sequence
  $\{kb\}_{k\in\mathbb{N}}$ is contained in $\mathcal{Q}(x,\psi)$, so
  \[
  \sum_{q\in \mathcal{Q}(x ,\psi)} \psi(q)^{d-1} \geq
  \sum_{k\in\mathbb{N}} \psi(kb)^{d-1}
  \]
  and this diverges as in the proof of
  Theorem~\ref{thm:gapsumdiv}(a). On the other hand, if
  $\sum_{q\in\mathbb{N}}\psi(q)^{d-1}$ converges, then it is obvious
  that so does $\sum_{q\in \mathcal{Q}(x ,\psi)} \psi(q)^{d-1}$,
  regardless of whether $x$ is rational or irrational.

  \textbf{Part~(b):} We want to show that the sum $\sum_{q \in
    \mathcal{Q}(x,\psi)} \psi(q)^{d-1}$ converges. Similar to the
  proof of Theorem~\ref{thm:gapsumdiv}, we partition partial sums by
  \begin{equation}\label{eqn:tobound}
    \sum_{\substack{q=1 \\ q \in \mathcal{Q}(x,\psi)}}^{\Sigma_N - 1} \psi(q)^{d-1} = \sum_{n = 1}^{N}\sum_{\substack{q=\Sigma_{n-1} \\ q \in \mathcal{Q}(x,\psi)}}^{\Sigma_n-1} \psi(q)^{d-1},
  \end{equation}
  and we proceed to bound. First, we have
  \[
  \leq \sum_{n = 1}^{N} \psi(\Sigma_{n-1})^{d-1}
  \sum_{\substack{q=\Sigma_{n-1} \\ q \in
      \mathcal{Q}(x,\psi)}}^{\Sigma_n-1} 1 \ll \sum_{n = 1}^{N}
  L_n\,\psi(\Sigma_{n-1})^d,
  \]
  by Lemma~\ref{lem:antischmidt}. Now, as before, we have $L_n =
  L_{n-1} + 1$ except when $n = \al_m$ for some $m\geq 2$, so
  \[
  \ll \sum_{n = 1}^{N} L_{n-1}\,\psi(\Sigma_{n-1})^d +
  \sum_{n=1}^{N}\psi(\Sigma_{n-1})^d + \sum_{m\in\mathbb{N}}
  \left(B_{m+1} - B_m\right)\psi(\Sigma_{\en_m})^d
  \]
  and recalling Lemma~\ref{lem:wlogconv}
  \[
  \ll \sum_{n = 1}^{N} L_{n-1}\,\psi(\Sigma_{n-1})^d +
  \sum_{n=1}^{N}\psi(\Sigma_{n-1})^d +
  \sum_{m\in\mathbb{N}}\frac{B_{m+1} - B_m}{\Sigma_{\en_m}},
  \]
  which converges for some $R\geq 1$ if $x$ has the bounded ratio
  property. Therefore,~\eqref{eqn:tobound} converges as $N\to\infty$,
  as wanted.
\end{proof}

\begin{proof}[\textbf{Proof of Theorem~\ref{thm:convergence}}]
  This follows from Theorem~\ref{thm:gapsumconv} in the same way that
  Theorem~\ref{thm:divergence} follows from
  Theorem~\ref{thm:gapsumdiv}. This time, instead of applying
  Gallagher, we use the convergence part of Khintchine's Theorem (or,
  really, the Borel--Cantelli Lemma) to say that convergence of
  $\sum_{q\in\mathbb{N}} \bar\psi(q)^{d-1}$ implies that the measure
  of $\mathcal{W}_{d-1}(\bar\psi)$ is zero.
\end{proof}

\subsection{Proof Corollaries~\ref{cor},~\ref{cor:phi},
  and~\ref{cor:2}}
\begin{proof}[\textbf{Proof of Corollaries~\ref{cor},~\ref{cor:phi}
    and~\ref{cor:2}}]
  On the other hand, if $\psi(q)\leq q^{-(1+\delta)/d}$ for some
  $\delta>0$, then we can bound by
  \[
  \ll \sum_{n = 1}^{N} L_{n-1}\,\psi(\Sigma_{n-1})^d +
  \sum_{n=1}^{N}\psi(\Sigma_{n-1})^d +
  \sum_{m\in\mathbb{N}}\frac{B_{m+1} -
    B_m}{\Sigma_{\en_m}^{1+\delta}}.
  \]
  The first two terms converge, so let us look at the last. Its
  convergence is equivalent to that of
  \begin{equation}\label{eqn:corsum}
    \sum_{m\in\mathbb{N}}\frac{L_{\al_{m+1}}}{\Sigma_{\en_m}^{1+\delta}}
  \end{equation}
  so in particular, this converges if $x$ has the bounded ratio
  property, which proves Corollary~\ref{cor}.

  But by Lemma~\ref{lem:BmRsum} we can compare the summand
  $\frac{L_{\al_{m+1}}}{\Sigma_{\en_m}^{1+\delta}}$ to ratios of
  continuants. An argument almost identical to that of
  Proposition~\ref{prop:brp} will show that~\eqref{eqn:corsum}
  converges if $x$ meets the same restrictions on Diophantine type as
  in that proposition. The only difference is that now we have taken
  the denominators in the calculations to the power $1+\delta$, which
  allows our restrictions on Diophantine type to be inclusive, rather
  than exclusive.
\end{proof}


\subsection*{Acknowledgments}

The author thanks Victor Beresnevich and Sanju Velani for suggesting
this problem and for many educational conversations, David Simmons for
alerting him to Khintchine's transference principle, and the referee
for making suggestions that improved the presentation.



\begin{bibdiv}
  \begin{biblist}

    \bib{BDVplanarcurves}{article}{ author={Beresnevich, Victor},
      author={Dickinson, Detta}, author={Velani, Sanju},
      title={Diophantine approximation on planar curves and the
        distribution of rational points}, note={With an Appendix II by
        R. C. Vaughan}, journal={Ann. of Math. (2)}, volume={166},
      date={2007}, number={2}, pages={367--426}, }

    \bib{Bermanifolds}{article}{ author={Beresnevich, Victor},
      title={Rational points near manifolds and metric Diophantine
        approximation}, journal={Ann. of Math. (2)}, volume={175},
      date={2012}, number={1}, pages={187--235}, }

    \bib{Gallagherkt}{article}{ author={Gallagher, P. X.},
      title={Metric simultaneous diophantine approximation. II},
      journal={Mathematika}, volume={12}, date={1965},
      pages={123--127}, }

    \bib{Ghosh}{article}{ author={Ghosh, Anish}, title={A
        Khintchine-type theorem for hyperplanes}, journal={J. London
        Math. Soc. (2)}, volume={72}, date={2005}, number={2},
      pages={293--304}, }

    \bib{Khintchine}{article}{ author={Khintchine, A.}, title={Zur
        metrischen Theorie der diophantischen Approximationen},
      language={German}, journal={Math. Z.}, volume={24}, date={1926},
      number={1}, pages={706--714}, }

    \bib{Kle03}{article}{ author={Kleinbock, D.}, title={Extremal
        subspaces and their submanifolds},
      journal={Geom. Funct. Anal.}, volume={13}, date={2003},
      number={2}, pages={437--466}, }

    \bib{Kle}{article}{ author={Kleinbock, Dmitry}, title={An
        extension of quantitative nondivergence and applications to
        Diophantine exponents}, journal={Trans. Amer. Math. Soc.},
      volume={360}, date={2008}, number={12}, pages={6497--6523},}

    \bib{KM98}{article}{ author={Kleinbock, D. Y.}, author={Margulis,
        G. A.}, title={Flows on homogeneous spaces and Diophantine
        approximation on manifolds}, journal={Ann. of Math. (2)},
      volume={148}, date={1998}, number={1}, pages={339--360},}

    \bib{gapratio}{article}{ author={Mukherjee, Manash},
      author={Karner, Gunther}, title={Irrational numbers of constant
        type---a new characterization}, journal={New York J. Math.},
      volume={4}, date={1998}, pages={31--34 (electronic)}, }

    \bib{Salat}{article}{ author={{\v{S}}al{\'a}t, Tibor}, title={On
        subseries}, journal={Math. Z.}, volume={85}, date={1964},
      pages={209--225}, }

    \bib{Spr69}{book}{ author={Sprind{\v{z}}uk, V. G.},
      title={Mahler's problem in metric number theory},
      series={Translated from the Russian by B. Volkmann. Translations
        of Mathematical Monographs, Vol. 25}, publisher={American
        Mathematical Society, Providence, R.I.}, date={1969},
      pages={vii+192}, }

    \bib{VV06}{article}{ author={Vaughan, R. C.}, author={Velani, S.},
      title={Diophantine approximation on planar curves: the
        convergence theory}, journal={Invent. Math.}, volume={166},
      date={2006}, number={1}, pages={103--124}, }

    \bib{Zhang}{article}{ author={Zhang, Yuqing},
      title={Diophantine exponents of affine subspaces: the
        simultaneous approximation case}, journal={J. Number Theory},
      volume={129}, date={2009}, number={8}, pages={1976--1989},}
  \end{biblist}
\end{bibdiv}

\end{document}